\documentclass[12pt,letterpaper]{amsart}
\usepackage[margin=1.1in]{geometry}
\usepackage[english]{babel}
\usepackage{amsrefs}

\usepackage{amsmath}
\usepackage{amssymb}
\usepackage{amsthm}
\usepackage{amscd}
\usepackage{bbm}
\usepackage{esvect}
\usepackage{mathrsfs}
\usepackage[inline]{asymptote}
\usepackage{hyperref}
\usepackage{cleveref}
\usepackage{pgf,tikz}
\usepackage{mathrsfs}

\theoremstyle{definition}
\newtheorem{theorem}{Theorem}[section]

\newtheorem{lemma}[theorem]{Lemma}

\newtheorem{proposition}[theorem]{Proposition}

\newtheorem{definition}[theorem]{Definition}
\newtheorem{remark}[theorem]{Remark}
\newtheorem{exmp}[theorem]{Example}

\newcommand{\pref}[1]{(\ref{#1})}

\crefname{subsection}{subsection}{subsections}

\title{Densities for Elliptic Curves over Global Function Fields}
\author{Andrew Yao}

\begin{document}

\begin{abstract}
Let $K$ be a global function field. We obtain a set of formulas for the densities of the Kodaira types and Tamagawa numbers of elliptic curves over a completion of $K$ that is independent of the field's characteristic. Furthermore, for a finite field $F$ and real numbers $s$ and $\epsilon$ such that $s>1$ and $\epsilon>0$, we prove that there exists a global function field $K$ such that the full constant field of $K$ is $F$ and the value of the zeta function of $K$ at $s$ is less than $1+\epsilon$.
\end{abstract}

\maketitle

\section{Introduction}
\label{sec:introduction}

Let $p$ be a prime and $q$ be a power of $p$. Let $K$ be a finite extension of $\mathbb{F}_q(t)$. Define $M_K$ to be the set of places of $K$. Suppose $P\in M_K$. Let $K_P$ be the completion of $K$ at $P$ and $R_P$ be the valuation ring of $K_P$. Suppose $E$ is an elliptic curve over $K$ with equation
\[
E: y^2+a_1xy+a_3y=x^3+a_2x^2+a_4x+a_6
\]
such that $a_1$, $a_2$, $a_3$, $a_4$, and $a_6$ are elements of $K$. $E$ has a long Weierstrass form, and if $a_1=a_2=a_3=0$, $E$ has a short Weierstrass form. We study densities for elliptic curves over $K$ that have a long Weierstrass form. 

As an elliptic curve over $K_P$, $E$ has a Kodaira type, which describes its geometry. Particularly, $E$ has a Tamagawa number $c_P(E):=[E(K_P):E_0(K_P)]$ over $K_P$, where $E_0(K_P)$ is the set of nonsingular points in $E(K_P)$ and $c_P(E)<\infty$. A method to determine the Kodaira type and Tamagawa number of an elliptic curve over $K_P$ is Tate's algorithm (\cite{AdvancedTopicsEC}, \cite{TateAlgorithm}). The description of Tate's algorithm in \cite{AdvancedTopicsEC} is used in this paper to compute local densities. Often, steps from this description of Tate's algorithm are referred to.

The papers \cite{localglobal} and \cite{ProdRationals} discuss densities of Kodaira types and Tamagawa products for elliptic curves over $\mathbb{Q}$. In these papers, the densities at the non-Archimedean places of $\mathbb{Q}$ are considered. In \cite{localglobal} and \cite{ProdRationals}, the densities are for elliptic curves in long and short Weierstrass form, respectively. Moreover, \cite{ProdNF} discusses densities of Kodaira types and Tamagawa products for elliptic curves over number fields in short Weierstrass form. 

Note that some of the methods for computing local densities with Tate's algorithm used in \Cref{sec:locala}, \Cref{sec:localb}, and \Cref{sec:localc} of this paper are similar to methods used in \cite{ProdNF}, \cite{localglobal}, and \cite{ProdRationals}. A goal of this paper is to develop a framework for transforming elliptic curves while applying Tate's theorem and to rigorously compute densities after translations. At each step, we check that the densities that we have computed are correct by returning to the original elliptic curve rather than only considering the transformed elliptic curve. For example, we characterize transformations that convert non-minimal elliptic curves to minimal elliptic curves, see \Cref{prop:unique1} and \Cref{prop:unique2}. We expect that the framework can be applied to other settings as well, for example to compute the corresponding densities over number fields.

Additionally, an important idea of this paper that is not discussed in \cite{localglobal} is the computation of the densities of the Kodaira types $I_N^*$ for individual values of $N\geq 1$. These densities are considered in \cite{ProdNF} and \cite{ProdRationals} for short Weierstrass form. We meticulously analyze these cases in \Cref{subsec:subden1}, \Cref{subsec:subden2}, and \Cref{subsec:subden3}.

Local densities over $K_P$ can be obtained using the Haar measure. Let $N$ be a positive integer. Note that $K_P^N$ as an additive group is locally compact, and because of this, Haar's theorem can be used on $K_P^N$. Particularly, suppose $\mu_P$ is the Haar measure on $K_P^N$ such that $\mu_P(R_P^N)=1$. 

Let $G_P$ be the set of curves $y^2+a_1xy+a_3y=x^3+a_2x^2+a_4x+a_6$ over $K_P$ such that $a_1,a_2,a_3,a_4,a_6\in R_P$. Because the discriminant of an elliptic curve must be nonzero, not all elements of $G_P$ are elliptic curves. Also, note that $G_P$ can be considered to be $R_P^5$. The local densities for $G_P$ are obtained from the Haar measure on $R_P^5$. 

\begin{definition}
\label{def:numiter}
For an elliptic curve $E\in G_P$, let $N_P(E)$ be the number of iterations of Tate's algorithm that are completed when the algorithm is used on $E$.
\end{definition}

Suppose $T$ is the set of Kodaira types. Let $\mathfrak{r}$ be an element of $T$ and $n$ be a positive integer. Define $\delta_K(\mathfrak{r},n;P)$ to be the Haar measure of the set of elliptic curves $E$ over $K_P$ with coefficients in $R_P$ such that $E$ has Kodaira type $\mathfrak{r}$ and the Tamagawa number of $E$ is $n$. For $k\geq 0$, define $\delta_K(\mathfrak{r},n,k;P)$ to be the Haar measure of the set of elliptic curves $E$ over $K_P$ with coefficients in $R_P$ such that $E$ has Kodaira type $\mathfrak{r}$, the Tamagawa number of $E$ is $n$, and $N_P(E)=k$.

The main result that we prove is that given $\mathfrak{r}$ and $n$, $\delta_K(\mathfrak{r}, n; P)$ only depends on $Q_P$, where $Q_P$ is defined in the notation section. In contrast with previous works, the formula for densities does not differ between the cases $p\geq 5$, $p=3$, and $p=2$. \Cref{thm:formula} addresses this result for minimal curves; \Cref{thm:multiterdensity} extends it to non-minimal curves.

\begin{theorem}
\label{thm:formula}
The following are true for all $P\in M_K$:
\begin{itemize}
\item $\delta_K(I_0, 1, 0; P) = \frac{Q_P-1}{Q_P},\,\delta_K(I_1, 1, 0; P) = \frac{(Q_P-1)^2}{Q_P^3}$
\item $\delta_K(I_2, 2, 0; P) = \frac{(Q_P-1)^2}{Q_P^4}$
\item $\delta_K(I_N, N, 0; P) = \delta_K\left(I_N, 2\left\lfloor \frac{N}{2}\right\rfloor - N + 2, 0; P\right) = \frac{(Q_P-1)^2}{2Q_P^{N+2}} \text{ for } N\geq 3$
\item $\delta_K(II, 1, 0; P) = \frac{Q_P-1}{Q_P^3},\,\delta_K(III, 2, 0; P) = \frac{Q_P-1}{Q_P^4}$
\item $\delta_K(IV, 1, 0; P) = \delta_K(IV, 3, 0; P) = \frac{Q_P-1}{2Q_P^5}$
\item $\delta_K(I_0^*, 1, 0; P) = \frac{Q_P^2-1}{3Q_P^7}, \,\delta_K(I_0^*, 2, 0; P) = \frac{Q_P-1}{2Q_P^6},\,\delta_K(I_0^*, 4, 0; P) = \frac{Q_P^2-3Q_P+2}{6Q_P^7}$
\item $\delta_K(I_N^*, 2, 0; P) = \delta_K(I_N^*, 4, 0; P) =\frac{(Q_P-1)^2}{2Q_P^{N+7}} \text{ for } N\geq 1$
\item $\delta_K(II^*, 1, 0; P) = \frac{Q_P-1}{Q_P^{10}},\,\delta_K(III^*, 2, 0; P) = \frac{Q_P-1}{Q_P^9}$
\item $\delta_K(IV^*, 1, 0; P) = \delta_K(IV^*, 3, 0; P) = \frac{Q_P-1}{2Q_P^8}$
\end{itemize}
\end{theorem}

\begin{remark}
In \cite{ProdRationals}, the local densities of $\mathfrak{r}$ and the Tamagawa number $n$ for elliptic curves in short Weierstrass form over $\mathbb{Q}_r$ for primes $r\geq 5$ have the same form as the densities in \Cref{thm:formula}. In \cite{ProdNF}, the local densities of $\mathfrak{r}$ and the Tamagawa number $n$ for elliptic curves in short Weierstrass form over completions of number fields at places that lie above primes $r\geq 5$ also have the same form as these densities.
\end{remark}

\begin{proof}
See Sections \ref{sec:locala}, \ref{sec:localb}, and \ref{sec:localc}.
\end{proof}

In this paper, we often consider the number of iterations that Tate's algorithm completes when the algorithm is used on an elliptic curve over $K_P$. In order to study this question, \Cref{prop:multiterations} is useful. Next, we give an important result of the paper for densities for non-minimal elliptic curves.

\begin{theorem}
\label{thm:multiterdensity}
For a Kodaira type $\mathfrak{r}$, positive integer $n$, and nonnegative integer $k$,
\[
\delta_K(\mathfrak{r},n,k;P)=\frac{1}{Q_P^{10k}}\delta_K(\mathfrak{r}, n, 0; P).
\]
\end{theorem}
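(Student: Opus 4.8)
The plan is to prove the identity by induction on $k$: the case $k=0$ is trivial, and the inductive step reduces to the single relation $\delta_K(\mathfrak{r},n,k+1;P)=Q_P^{-10}\,\delta_K(\mathfrak{r},n,k;P)$. Write $A_j\subseteq G_P$ for the set of elliptic curves of Kodaira type $\mathfrak{r}$, Tamagawa number $n$, and $N_P(E)=j$, so $\delta_K(\mathfrak{r},n,j;P)=\mu_P(A_j)$ and the goal is $\mu_P(A_{k+1})=Q_P^{-10}\mu_P(A_k)$. I would realize $A_{k+1}$, up to a null set, as a finite disjoint union of measure-scaled copies of $A_k$ under the operation of \emph{de-minimization by one level}: every $E\in A_{k+1}$ arises from some $E'\in A_k$ by a Weierstrass change of variables $x=u^2x'+r$, $y=u^3y'+u^2sx'+t$ with $u=\pi^{-1}$ and suitable $r,s,t$. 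Such a change of variables is an isomorphism of elliptic curves over $K_P$, so the Kodaira type $\mathfrak{r}$ and Tamagawa number $n$ are preserved; and by \Cref{prop:multiterations} (each completed pass of Tate's algorithm divides the discriminant by $\pi^{12}$ and terminates precisely at the minimal equation), this operation raises $N_P$ by exactly $1$, so it does carry $A_k$ onto $A_{k+1}$.

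The quantitative input is twofold. First, feeding $u=\pi^{-1}$ into the standard transformation formulas for $a_1,\dots,a_6$ and imposing integrality of the image shows that the de-minimization of $E'\in A_k$ with parameters $(r,s,t)$ lies in $G_P$ exactly when $(r,s,t)\in\pi^{-2}R_P\times\pi^{-1}R_P\times\pi^{-3}R_P$ (in residue characteristic $2$ or $3$ these ranges pick up extra factors from the valuations of $2$ and $3$, but the bookkeeping is otherwise identical); and shifting $(r,s,t)$ by a triple in $R_P^3$ has the same effect as first applying an integral ($u=1$) change of variables to $E'$ — which keeps it in $A_k$ — and then de-minimizing with the original $(r,s,t)$, so only the class of $(r,s,t)$ modulo $R_P^3$ matters. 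Hence $A_{k+1}$ is, up to a null set, the disjoint union over a complete set $\mathcal{C}$ of coset representatives for $(\pi^{-2}R_P/R_P)\times(\pi^{-1}R_P/R_P)\times(\pi^{-3}R_P/R_P)$ of the images of $A_k$ under the corresponding de-minimization maps, and $\abs{\mathcal{C}}=Q_P^{2}\cdot Q_P\cdot Q_P^{3}=Q_P^{6}$. Second, for each fixed $(r,s,t)\in\mathcal{C}$ the de-minimization map, read off on coefficient tuples, is affine with lower-triangular linear part whose diagonal entries are $\pi,\pi^2,\pi^3,\pi^4,\pi^6$, so it scales $\mu_P$ by $Q_P^{-(1+2+3+4+6)}=Q_P^{-16}$. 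Therefore $\mu_P(A_{k+1})=\abs{\mathcal{C}}\cdot Q_P^{-16}\,\mu_P(A_k)=Q_P^{6-16}\,\mu_P(A_k)=Q_P^{-10}\,\mu_P(A_k)$, and the induction closes.

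I expect the main obstacle to be the middle step — showing that the displayed union both exhausts $A_{k+1}$ and is disjoint, in each case up to a set of measure zero. Exhaustion rests on the theory of minimal Weierstrass equations over $R_P$: a non-minimal $E$ comes from a minimal model by iterated de-minimization, and the group law for Weierstrass transformations lets one split the $(k+1)$-fold de-minimization as a $k$-fold one (which, after its translation part is normalized, lands in $A_k$) followed by a single level with parameters in $\mathcal{C}$. Disjointness follows because an overlap between the pieces indexed by distinct $c,c'\in\mathcal{C}$ would force $c$ and $c'$, after composing the two transformations, to differ by a triple in $R_P^3$, contradicting that they represent different cosets. The genuinely fiddly points — curves with extra automorphisms, the discriminant-zero locus, and the adjusted parameter ranges when $p\in\{2,3\}$ — contribute only null sets or are controlled by \Cref{prop:multiterations}, and so do not affect the constant $Q_P^{-10}$.
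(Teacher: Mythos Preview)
Your approach is sound and genuinely different from the paper's. The paper first reduces, via the map $\varphi$, to simplified Weierstrass forms $G_P^{(1)}, G_P^{(2)}, G_P^{(3)}$ (of dimension $2$, $3$, $4$ according as $p\geq 5$, $p=3$, $p=2$), and then builds explicit maps $\phi_k\colon S_k\to S_0$ in each reduced space; well-definedness in characteristics $3$ and $2$ rests on the non-trivial uniqueness statements \Cref{prop:unique1} and \Cref{prop:unique2}, after which the factor $Q_P^{-10k}$ drops out of \Cref{invdensitymult1}, \Cref{invdensitymult2b}, \Cref{invdensitymult3b}. Your route --- one-step de-minimization directly in the five-dimensional $G_P$, with the $Q_P^{6}$ cosets absorbing exactly the translation ambiguity and the Jacobian $Q_P^{-16}$ coming from the diagonal $\pi,\pi^2,\pi^3,\pi^4,\pi^6$ --- avoids the characteristic-dependent reduction, at the price of leaning on \Cref{translation} for the disjointness step (which is what your ``differ by a triple in $R_P^3$'' sentence is really using, modulo the null set of curves with extra automorphisms).

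One point needs correction. Your aside that ``in residue characteristic $2$ or $3$ these ranges pick up extra factors from the valuations of $2$ and $3$'' is a number-field reflex that does not apply here: $K$ has characteristic $p$, so when $p=2$ or $p=3$ the offending prime is literally $0$ in $K$ and there is no finite valuation to absorb. What actually happens is that some transformation formulas degenerate --- for instance, in characteristic $2$ the $a_1$-equation loses its $s$-term entirely --- so the constraint on each parameter has to be recovered from a later equation. Relatedly, the word ``exactly'' in your description of the parameter range is too strong in these characteristics: for particular $E'$, parameters outside $\pi^{-2}R_P\times\pi^{-1}R_P\times\pi^{-3}R_P$ can still produce integral de-minimizations. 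Fortunately neither issue breaks the argument: the stated range always \emph{suffices} to cover $A_{k+1}$ (factor the de-minimization as an integral translation followed by the scaling $a_i\mapsto\pi^i a_i$ and invoke \Cref{prop:multiterations}), and disjointness up to measure zero follows from \Cref{translation} exactly as you sketch. But the verification in characteristics $2$ and $3$ is more than ``bookkeeping,'' and it is precisely the content that the paper packages into \Cref{prop:unique1} and \Cref{prop:unique2}.
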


We prove \Cref{thm:multiterdensity} by considering the cases $p\geq 5$, $p=3$, and $p=2$. The proof of this result is given in \Cref{subsec:multiterproof}.

Furthermore, we prove the following result in \Cref{subsec:tamagawanum} using results from \Cref{sec:global}. Note that the set $S$ is defined later in this section.

\begin{theorem}
\label{thm:densitytamn}
For $P\in S^C$ and $c\in\mathbb{N}$, let $d_P(c)$ denote the local density of the minimal elliptic curves over $K_P$ with Tamagawa number $c$. For $n\in\mathbb{N}$, the density of the set of minimal curves $E\in W_S$ such that $\prod_{P\in S^C} c_P = n$ is
\[
\sum_{\substack{c_P,\, P \in S^C, \\ \prod_{P\in S^C} c_P = n}} \prod_{P\in S^C} d_P(c_P).
\]
\end{theorem}

\textbf{Organization.} The paper is organized as follows. In \Cref{sec:ec}, we introduce elliptic curves and Tate's algorithm. Next, in \Cref{sec:global}, for a nonempty finite subset $S$ of $M_K$ and a positive integer $N$, we discuss how to obtain global densities for $\mathcal{O}_{K,S}^N$. Afterwards, in Sections \ref{sec:locala}, \ref{sec:localb}, and \ref{sec:localc}, we compute the local densities if the characteristic $p$ of $K$ is at least $5$, equal to $2$, and equal to $3$, respectively. In \Cref{sec:results}, we prove additional results about local and global densities, some of which we have mentioned earlier in this section. In \Cref{sec:construction}, we construct a global function field with zeta function arbitrarily close to one.

\textbf{Notation.} 
Suppose $P$ is a place of $K$. Let $\pi_P$ be a uniformizer of $P$ in $K$.  Let the degree of $P$ be $[R_P/\pi_P R_P:\mathbb{F}_q]$ and let $Q_P=|R_P/\pi_P R_P|$. Also, denote $v_P$ to be the valuation $v_{\pi_P}$ over $K_P$; note that $v_P$ is also a valuation over $K$ because $K\subset K_P$. Moreover for a nonnegative integer $k$, let $L_{P,k}$ be a set of representatives of the cosets of $R_P/\pi_P^kR_P$ such that $0\in L_{P,k}$.

Suppose $S$ is a finite nonempty subset of $M_K$. We let $\mathcal{O}_{K,S}$ be the set of $x\in K$ such that if $P\in S^C=M_K\backslash S$, $v_P(x)\geq 0$. Also, let $W_S$ be the set of curves $y^2+a_1xy+a_3y=x^3+a_2x^2+a_4x+a_6$ such that $a_1,a_2,a_3,a_4,a_6\in\mathcal{O}_{K,S}$.

Suppose $D$ is a divisor of $K$. Define $L(D)$ to be the set of $x\in K$ such that $x=0$ or $x\not=0$ and $(x)+D\geq 0$.

Furthermore, let the zeta function of $K$ be $\zeta_K$. The zeta function is discussed in more detail in \Cref{sec:construction}.

\textbf{Acknowledgments.} The author conducted the research in this paper in the Summer Program in Undergraduate Research at MIT during the summer of 2022. The author would like to thank Hao Peng for providing useful guidance. Also, the author would like to thank Zhiyu Zhang for suggesting the problem. Additionally, the author would like to thank David Jerison and Ankur Moitra for giving advice about the project.

\section{Elliptic Curves}
\label{sec:ec}

Suppose $P$ is a place of $K$. Let $E$ be an elliptic curve over $K_P$. There exist $a_1,a_2,a_3,a_4,\\a_6\in K_P$ such that $E$ has equation
\[
E: y^2+a_1xy+a_3y=x^3+a_2x^2+a_4x+a_6.
\]
Suppose $a_1,a_2,a_3,a_4,a_6\in K_P$ satisfy this condition. Additionally, define 
\begin{align*}
    & b_2(E)=a_1^2+4a_2, b_4(E)=a_1a_3+2a_4, b_6(E)=a_3^2+4a_6, \\
    & b_8(E)=a_1^2a_6+4a_2a_6-a_1a_3a_4+a_2a_3^2-a_4^2.
\end{align*}
he discriminant of $E$ is
\[
\Delta(E)=-b_2(E)^2b_8(E)-8b_4(E)^3-27b_6(E)^2+9b_2(E)b_4(E)b_6(E).
\]

\begin{definition}[\cite{TateAlgorithm}] Elliptic curves $E$ and $F$ over $K_P$ are \textit{isomorphic} if there exists $l,m,n,u\in K_P$ such that $u\not=0$ and the equation for $F$ can be obtained from the equation for $E$ by first replacing $x$ with $u^2x+n$ and $y$ with $u^3y+lu^2x+m$ and then dividing by $u^6$. 
\end{definition}

\begin{definition}[\cite{TateAlgorithm}] 
An elliptic curve $E$ over $K_P$ is $\textit{minimal}$ if the equation for $E$ has coefficients in $R_P$ and if there does not exist an elliptic curve $F$ over $K_P$ such that the equation for $F$ has coefficients in $R_P$, $F$ is isomorphic to $E$, and $v_P(\Delta(F))< v_P(\Delta(E))$.
\end{definition}

The following proposition generalizes Theorem 3.2 of \cite{TateAlgorithm} to non-minimal isomorphic elliptic curves. Note that this proposition is used later in the paper to compute local densities.

\begin{proposition}
\label{translation}
Let $E$ and $F$ be elliptic curves over $K_P$ that have equations with coefficients in $R_P$. Assume that $E$ and $F$ are isomorphic and satisfy $v_P(\Delta(E))=v_P(\Delta(F))$. Then, there exists $l,m,n,u\in R_P$ such that $v_P(u)=0$ and the equation of $F$ can be obtained from the equation of $E$ by first replacing $x$ with $u^2x+n$ and $y$ with $u^3y+lu^2x+m$ and then dividing by $u^6$.
\end{proposition}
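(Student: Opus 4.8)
The plan is to take an arbitrary equivalence $(l,m,n,u)$, with $u\neq 0$, carrying the equation of $E$ to that of $F$, and to show it is \emph{automatically} of the asserted form. Write the equation of $E$ as $y^2+a_1xy+a_3y=x^3+a_2x^2+a_4x+a_6$ and that of $F$ as $y^2+a_1'xy+a_3'y=x^3+a_2'x^2+a_4'x+a_6'$. The only inputs are the standard transformation formulas expressing the primed coefficients (and the $b$-invariants of $F$) in terms of $l,m,n,u$ and the coefficients of $E$, together with the fact that the discrete valuation ring $R_P$ is integrally closed in $K_P$.

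First I would pin down $u$: under the substitution $\Delta(F)=u^{-12}\Delta(E)$, so $v_P(\Delta(F))=v_P(\Delta(E))-12\,v_P(u)$, and the hypothesis $v_P(\Delta(E))=v_P(\Delta(F))$ forces $v_P(u)=0$, i.e.\ $u$ is a unit of $R_P$. The remaining and only substantive point is that $l,m,n\in R_P$. The naive relations $u\,a_1'=a_1+2l$ and $u^2 b_2(F)=b_2(E)+12n$ do not settle this, since $2$, $3$, $12$ need not be units in $R_P$ when $p\in\{2,3\}$. The idea is instead to exhibit, for each of $n$, $l$, $m$ separately, a \emph{monic} polynomial relation with coefficients in $R_P$, and then conclude by integral closedness.

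Since the $b$-invariants are unaffected by the $l$- and $m$-parts of the substitution, $n$ alone appears in
\[
u^8 b_8(F)=b_8(E)+3n\,b_6(E)+3n^2 b_4(E)+n^3 b_2(E)+3n^4,\qquad u^6 b_6(F)=b_6(E)+2n\,b_4(E)+n^2 b_2(E)+4n^3 .
\]
If $p\neq 3$, dividing the first relation by the unit $3$ exhibits $n$ as a root of a monic quartic over $R_P$ (here one uses $v_P(u)=0$ and that every $b_i(E)$ is a polynomial in $a_1,\dots,a_6\in R_P$), so $n\in R_P$; if $p=3$ the second relation does the job, its leading coefficient $4$ now being a unit, yielding a monic cubic. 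This dichotomy $p=3$ versus $p\neq 3$ is the only case distinction needed. With $n\in R_P$ in hand, the relation $u^2 a_2'=a_2-l\,a_1+3n-l^2$ rearranges to the monic quadratic $l^2+a_1l+(u^2 a_2'-a_2-3n)=0$ over $R_P$, so $l\in R_P$, and $u^6 a_6'=a_6+n a_4+n^2 a_2+n^3-m\,a_3-m^2-nm\,a_1$ rearranges to the monic quadratic $m^2+(a_3+n a_1)m+(u^6 a_6'-a_6-n a_4-n^2 a_2-n^3)=0$ over $R_P$, so $m\in R_P$. Since $l,m,n,u$ are the parameters we started from, the proposition follows.

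The step that needs genuine care is precisely this degeneration: in characteristics $2$ and $3$ one must choose, for each unknown, a relation whose leading coefficient in that unknown is a unit — the $b_8$-relation rather than the $b_2$-relation for $n$ when $p=2$, the $b_6$-relation when $p=3$, and the quadratics coming from $a_2$ and $a_6$ (which are monic whatever $p$ is) for $l$ and $m$. Beyond that single choice the argument is characteristic-free and short; in particular it invokes neither Tate's algorithm nor minimality, which is what lets it extend Theorem~3.2 of \cite{TateAlgorithm} to non-minimal equations.
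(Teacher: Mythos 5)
Your proof is correct, and it is a filled-in version of the argument the paper delegates to Tate's Theorem~3.2 (the paper's ``proof'' is the single sentence ``The proof of Theorem 3.2 of \cite{TateAlgorithm} can be used to prove this proposition''). You follow the standard route: use $\Delta(F)=u^{-12}\Delta(E)$ to get $v_P(u)=0$, then obtain $n,l,m\in R_P$ by exhibiting each as a root of a monic polynomial over $R_P$ and invoking integral closedness of the DVR. Your choice of which transformation relation to use for $n$ --- the $b_8$-relation (leading coefficient $3$) when $p\neq 3$, the $b_6$-relation (leading coefficient $4\equiv 1$) when $p=3$, and the monic quadratics from $a_2'$ and $a_6'$ for $l$ and $m$ --- is exactly the characteristic-uniform bookkeeping needed to avoid the non-unit issues with $2,3,12$, and the formulas you quote are the standard ones. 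One small presentational point: you present this as a dichotomy $p\neq 3$ versus $p=3$, which is the cleanest statement; when $p\geq 5$ the more direct $b_2$-relation $u^2b_2(F)=b_2(E)+12n$ already gives $n=(u^2b_2(F)-b_2(E))/12\in R_P$, but subsuming that case into the $b_8$ argument as you do is harmless. Your emphasis that the argument never invokes minimality or Tate's algorithm is precisely why the proposition extends to the nonminimal setting, which is the point the paper wants.
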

\begin{proof}
The proof of Theorem 3.2 of \cite{TateAlgorithm} can be used to prove this proposition.
\end{proof}

\begin{proposition}
\label{prop:multiterations}
     Let $k$ be a nonnegative integer. Suppose $E$ is an elliptic curve over $K_P$ with equation
     \[
     E: y^2+a_1xy+a_3y=x^3+a_2x^2+a_4x+a_6
     \]
     and assume that $a_1,a_2,a_3,a_4,a_6\in R_P$. For $l,m,n\in K_P$, let $E'(l,m,n)$ be the elliptic curve that is $E$ with $x$ replaced by $x+n$ and $y$ replaced by $y+lx+m$. Then, $N_P(E)\geq k$ if and only if there exists $l,m,n\in R_P$ such that if $E'(l,m,n)$ has equation
     \[
     E'(l,m,n):y^2+a_1'xy+a_3'y=x^3+a_2'x^2+a_4'x+a_6',
     \]
     where $a_i'\in \pi_P^{ki}R_P$ for $i\in\{1,2,3,4,6\}$.
\end{proposition}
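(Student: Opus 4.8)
The plan is to separate the coordinate geometry from the workings of the algorithm. Recall from the description of Tate's algorithm in \cite{AdvancedTopicsEC} that each completed iteration consists of a change of variables $x\mapsto x+n$, $y\mapsto y+lx+m$ with $n,l,m\in R_P$, followed by the substitution $x\mapsto\pi_P^2x$, $y\mapsto\pi_P^3y$ and division by $\pi_P^6$; this last substitution replaces each $a_i$ by $a_i/\pi_P^i$, lowers $v_P(\Delta)$ by $12$, and is carried out precisely when the current equation is non-minimal. Hence after $k$ completed iterations on $E$ the algorithm has produced an elliptic curve over $K_P$ with coefficients in $R_P$ whose discriminant has valuation $v_P(\Delta(E))-12k$, and it is still equivalent to $E$. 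I will deduce the proposition from two equivalences: \textbf{(a)} $N_P(E)\geq k$ holds if and only if $E$ is equivalent over $K_P$ to an elliptic curve with coefficients in $R_P$ of discriminant valuation $v_P(\Delta(E))-12k$; and \textbf{(b)} this latter condition holds if and only if there exist $l,m,n\in R_P$ with $E'(l,m,n)$ satisfying $a_i'\in\pi_P^{ki}R_P$ for all $i\in\{1,2,3,4,6\}$.

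I would prove \textbf{(b)} first. If such $l,m,n$ exist, then applying $x\mapsto\pi_P^{2k}x$, $y\mapsto\pi_P^{3k}y$ and division by $\pi_P^{6k}$ to $E'(l,m,n)$ produces an elliptic curve with coefficients $a_i'/\pi_P^{ki}\in R_P$ and discriminant valuation $v_P(\Delta(E'(l,m,n)))-12k=v_P(\Delta(E))-12k$, since a translation leaves the discriminant unchanged. Conversely, suppose $E$ is equivalent to an elliptic curve $G$ with coefficients in $R_P$ and $v_P(\Delta(G))=v_P(\Delta(E))-12k$. Any equivalence $E\to G$ multiplies the discriminant by $u^{-12}$ for its scaling parameter $u$, so $v_P(u)=k$; writing the equivalence as a translation of $E$ (by parameters in $K_P$) followed by $x\mapsto u^2x$, $y\mapsto u^3y$ and division by $u^6$, and then undoing that last substitution, exhibits a curve $F$ obtained from $E$ by a translation with $a_i(F)\in\pi_P^{ki}R_P$ (using $a_i(G)\in R_P$ and $v_P(u)=k$). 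In particular $F$ has coefficients in $R_P$, is equivalent to $E$, and satisfies $v_P(\Delta(F))=v_P(\Delta(E))$, so \Cref{translation} yields $l,m,n\in R_P$ and a unit $u'\in R_P$ realizing $E\sim F$; since $u'$ is a unit, the translate $E'(l,m,n)$ of $E$ has the same coefficient valuations as $F$, whence $a_i'\in\pi_P^{ki}R_P$.

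For \textbf{(a)}, the forward implication is exactly the observation recorded in the first paragraph. For the reverse implication I induct on $k$, the case $k=0$ being trivial. If $k\geq1$ and $E$ is equivalent to an elliptic curve $G$ with coefficients in $R_P$ and $v_P(\Delta(G))=v_P(\Delta(E))-12k<v_P(\Delta(E))$, then $E$ is non-minimal, so Tate's algorithm completes at least one iteration on $E$ and reaches a curve $E_1$ with coefficients in $R_P$, still equivalent to $G$, with $v_P(\Delta(E_1))=v_P(\Delta(E))-12$; then $v_P(\Delta(G))=v_P(\Delta(E_1))-12(k-1)$, so the induction hypothesis applied to $E_1$ gives at least $k-1$ further iterations, hence $N_P(E)\geq k$.

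The step I expect to be the main obstacle is the reverse implication of \textbf{(b)}: decomposing the equivalence $E\sim G$ so that its scaling part has $v_P(u)=k$ on the nose and its translation part has parameters in $R_P$. The valuation statement comes from the behaviour of $\Delta$ under equivalences, and the integrality of the parameters is precisely what \Cref{translation} provides, once the question has been reduced to the pair of equivalent curves $E$ and $F$ which both have coefficients in $R_P$ and the same discriminant valuation. A secondary point, used in the reverse implication of \textbf{(a)}, is the standard correctness property of Tate's algorithm that its scaling step is performed exactly when the equation is non-minimal; I will take this from \cite{AdvancedTopicsEC}.
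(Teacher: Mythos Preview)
Your proof is correct and takes a genuinely different route from the paper's. The paper argues the hard direction (from $N_P(E)\geq k$ to the existence of $l,m,n$) by an explicit induction on $k$: given a translate that works for $k=a$, it rescales by $\pi_P^a$, runs one further iteration of Tate's algorithm to pick up an additional integral translation $(l',m',n')$, and then writes down the composite $(l+l'\pi_P^a,\,m+m'\pi_P^{3a}+ln'\pi_P^{2a},\,n+n'\pi_P^{2a})$ by hand. You instead factor through the discriminant: you characterise $N_P(E)\geq k$ by the existence of an equivalent integral model with discriminant valuation lowered by $12k$, and then invoke \Cref{translation} to upgrade that equivalence to one with integral translation parameters and unit scaling, which immediately yields the required divisibilities on the $a_i'$. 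The paper's approach is more self-contained (it does not need \Cref{translation} for this proposition), whereas yours is conceptually cleaner and makes transparent that $N_P(E)$ depends only on the gap between $v_P(\Delta(E))$ and the minimal discriminant valuation in the equivalence class. Both arguments ultimately rely on the standard correctness property of Tate's algorithm---that the rescaling step is reached precisely when the current model is non-minimal---for the ``easy'' direction; the paper's one-line ``$N_P(E)=N_P(E'(l,m,n))\geq k$'' and your reverse implication of \textbf{(a)} are using the same fact.
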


\begin{proof}
Suppose $l$, $m$, $n$ exist. Let $l,m,n$ satisfy the condition. From Tate's algorithm, we have that $N_P(E)=N_P(E'(l,m,n))\geq k$.

Next, we prove that if $N_P(E)\geq k$, $l$, $m$, and $n$ exist using induction on $k$. The base case $k=0$ is clear. Let $a$ be a nonnegative integer and assume the result is true for $k=a$. We prove the result is true for $k=a+1$. Assume $N_P(E)\geq a+1$. Because $N_P(E)\geq a$, $l,m,n\in R_P$ exist such that if $x$ is replaced with $x+n$ and $y$ is replaced with $y+lx+m$, the resulting curve $E'(l,m,n): y^2+a_1'xy+a_3'y=x^3+a_2'x^2+a_4'x+a_6'$ has $a_i'\equiv 0\pmod{\pi_P^{ia}}$ for $i\in\{1,2,3,4,6\}$. Suppose $l,m,n\in R_P$ satisfy this condition. Suppose that the curve that is obtained after Tate's algorithm is used for $a$ iterations on $E'(l,m,n)$ is
\[
F: y^2+\frac{a_1'}{\pi_P^a}xy+\frac{a_3'}{\pi_P^{3a}}y=x^3+\frac{a_2'}{\pi_P^{2a}}x^2+\frac{a_4'}{\pi_P^{4a}}x+\frac{a_6'}{\pi_P^{6a}}.
\]
We have that $F$ is $E$ with $x$ replaced with $\pi_P^{2a}x+n$ and $y$ replaced with $\pi_P^{3a}y+l\pi_P^{2a}x+m$ divided by $\pi_P^{6a}$.

Because $N_P(E'(l,m,n))=N_P(E)\geq a+1$, $F$ will complete at least one more iteration. During this iteration, suppose $x$ is replaced with $x+n'$ and $y$ is replaced with $y+l'x+m'$. We have that
the resulting elliptic curve
\[
F': y^2+a_1''xy+a_3''y=x^3+a_2''x^2+a_4''x+a_6''
\]
has $a_i''\equiv 0\pmod{\pi_P^i}$ for $i\in\{1,2,3,4,6\}$. Moreover, $F'$ is $E$ with $x$ replaced with 
\[
\pi_P^{2a}x+n+n'\pi_P^{2a}
\]
and $y$ replaced with 
\[
\pi_P^{3a}y+(l+l'\pi_P^a)\pi_P^{2a}x+m+m'\pi_P^{3a}+ln'\pi_P^{2a}\]
divided by $\pi_P^{6a}$. Suppose the equation of
\[
E'(l+l'\pi_P^a, m+m'\pi_P^{3a}+ln'\pi_P^{2a}, n+n'\pi_P^{2a})
\]
is
\[
y^2+a_1'''xy+a_3'''y=x^3+a_2'''x^2+a_4'''+a_6'''.
\]
We have that $a_i'''=\pi_P^{ai}a_i''$ is divisible by $\pi_P^{(a+1)i}$ for $i\in\{1,2,3,4,6\}$. This completes the induction. We are done.
\end{proof}

Note that Tate's algorithm cannot be used on a curve in $G_P$ with discriminant $0$. However, this is not considered in the calculations of local densities later in the paper. Suppose $\mathfrak{r}\in T$, $n$ is a positive integer, and $k$ is a nonnegative integer. The set $U$ of elliptic curves $E\in G_P$ with Kodaira type $\mathfrak{r}$, Tamagawa number $n$, and $M(E)=k$ is an open subset of $G_P$, because if $E\in U$ and multiples of $\pi_P^M$ are added to the coefficients of $E$ for sufficiently positive large integers $M$, the resulting curve will be an element of $U$. Particularly, the set of elliptic curves is an open subset of $G_P$. In the next proposition, we prove that the Haar measure of this set is $1$; note that it follows that the Haar measure of the set of curves in $G_P$ with discriminant $0$ is $0$.

\begin{proposition}
\label{densityec}
The Haar measure of the set of elliptic curves is $1$.
\end{proposition}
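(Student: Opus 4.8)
The plan is to argue by complementation. The set of elliptic curves in $G_P$ is precisely $G_P\cong R_P^5$ with the locus $Z=\{(a_1,a_2,a_3,a_4,a_6)\in R_P^5 : \Delta(E)=0\}$ removed, so since $\mu_P(R_P^5)=1$ by the normalization of $\mu_P$, it suffices to prove $\mu_P(Z)=0$. The crucial point is that $\Delta$ is the evaluation of a fixed universal polynomial $\Delta\in\mathbb{Z}[a_1,a_2,a_3,a_4,a_6]$, and that its image in $K_P[a_1,a_2,a_3,a_4,a_6]$ is \emph{not} the zero polynomial; granting this, $Z$ is the zero set in $R_P^5$ of a nonzero polynomial over $K_P$, and the result reduces to a general measure-theoretic fact.

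To see that $\Delta$ does not vanish identically over $K_P$ in any characteristic, I would regard $\Delta$ as a polynomial in $a_6$ with coefficients in $\mathbb{Z}[a_1,a_2,a_3,a_4]$. From the defining formulas, the coefficient of $a_6^2$ is $-432$, and the coefficient of $a_6$ is $-b_2^3-216\,a_3^2+36\,b_2 b_4$, where $b_2=a_1^2+4a_2$ and $b_4=a_1a_3+2a_4$. When $p\geq 5$ the number $-432=-2^4\cdot 3^3$ is a nonzero element of $\mathbb{F}_p$, so the $a_6^2$-coefficient is nonzero; when $p\in\{2,3\}$ the $a_6$-coefficient reduces modulo $p$ to $-b_2^3$, which is a nonzero polynomial in $a_1,\dots,a_4$ (it equals $-a_1^6$ when $p=2$ and $-(a_1^2+a_2)^3$ when $p=3$). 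Hence $\Delta\neq 0$ in $K_P[a_1,\dots,a_6]$ for every $p$.

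Finally I would prove the general fact: for a nonzero polynomial $f\in K_P[x_1,\dots,x_N]$, its zero set $Z(f)\subseteq R_P^N$ has $\mu_P(Z(f))=0$, by induction on $N$. For $N=1$ a nonzero polynomial has finitely many roots, and each singleton has measure $0$ since it lies in a coset of $\pi_P^k R_P$ for every $k$ and such a coset has measure $Q_P^{-k}\to 0$. For $N\geq 2$, write $f=\sum_{j=0}^{d} g_j(x_1,\dots,x_{N-1})\,x_N^j$ with some $g_j\neq 0$; let $A=\{(x_1,\dots,x_{N-1})\in R_P^{N-1}: g_j(x_1,\dots,x_{N-1})=0\text{ for all }j\}$. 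Then $A$ is contained in the zero set of a single nonzero $g_j$, so $\mu_P(A)=0$ by the inductive hypothesis, while for each $(x_1,\dots,x_{N-1})\notin A$ the slice $\{x_N\in R_P: f(x_1,\dots,x_N)=0\}$ is finite, hence of measure $0$. Since the restriction of $\mu_P$ to $R_P^N$ is the $N$-fold product of the normalized Haar measure on $R_P$, the Fubini--Tonelli theorem gives $\mu_P(Z(f))=0$. Applying this with $N=5$ and $f=\Delta$ shows $\mu_P(Z)=0$, so the set of elliptic curves has measure $1-0=1$. The only real content here is the bookkeeping in this inductive lemma together with the (elementary, characteristic-by-characteristic) verification that $\Delta$ is not the zero polynomial; an alternative that sidesteps the general lemma is to fix $a_1,\dots,a_4$ subject to a co-measure-zero condition ($a_1\neq 0$ when $p=2$, $a_1^2+a_2\neq 0$ when $p=3$, and no condition when $p\geq 5$) and use directly that $\Delta$ is then a nonconstant polynomial in $a_6$ with finitely many roots, followed by Fubini.
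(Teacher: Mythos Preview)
Your argument is correct. The paper takes a slightly different route: it fixes a positive integer $M$, asserts that the number of tuples $(a_1,a_2,a_3,a_4,a_6)\in (R_P/\pi_P^M R_P)^5$ with $\Delta\equiv 0\pmod{\pi_P^M}$ is $O(Q_P^{4M})$, divides by $Q_P^{5M}$, and lets $M\to\infty$. Both arguments rest on the same underlying fact, namely that $\Delta$ is a nonzero polynomial over $K_P$ in every characteristic; you make this explicit with the $a_6$-coefficient computation, whereas the paper leaves it implicit in the $O(Q_P^{4M})$ bound. Your induction-plus-Fubini lemma is a clean, reusable statement (zero sets of nonzero polynomials have Haar measure zero), while the paper's counting-mod-$\pi_P^M$ approach is more in keeping with the finite-residue-class bookkeeping used throughout the rest of the paper. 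Either way the content is the same; your version simply unpacks more of the details.
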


\begin{proof}

Let $M$ be a positive integer. For $E: y^2+a_1xy+a_3y=x^3+a_2x^2+a_4x+a_6$, we see that the number of solutions for $a_i$, $i\in\{1,2,3,4,6\}$ modulo $\pi_P^M$ to $\Delta(E)\equiv 0\pmod{\pi_P^M}$ is $O(Q_P^{4M})$. Therefore, the Haar measure of the set of elliptic curves with discriminant equal to $0$ is at most $\frac{O(Q_P^{4M})}{Q_P^{5M}}=O(\frac{1}{Q_P^M})$. The result follows from taking $M\rightarrow\infty$.
\end{proof}

\section{Global Densities}
\label{sec:global}

We discuss results from \cite{globaldensityfunc} and \cite{sqfree} in this section that we use to connect local densities to global densities. Note that \cite{sqfree} considers when $S=\{P_\infty\}$ as well as when $S$ is any finite nonempty subset of $M_K$. We are more interested in the latter case.

\subsection{Setup}
Definitions from \cite{globaldensityfunc} are used in this subsection.

Let $S$ be a finite nonempty subset of $M_K$. Also, suppose $N$ is a positive integer. Let $\text{Div}(S)$ be the set of divisors
\[
\sum_{P\in S} n_PP
\]
such that for $P\in S$, $n_P$ is a nonnegative integer and there exists $P\in S$ such that $n_P>0$. 

Suppose $N$ is a positive integer and suppose $U\subset\mathcal{O}_{K,S}^N$. The upper density of $U$ at $S$ is
\[
\overline{d}_S(U)=\limsup_{D\in\text{Div}(S)}\frac{|U\cap L(D)^N|}{|L(D)|^N}
\]
and the lower density of $U$ at $S$ is
\[
\underline{d}_S(U)=\liminf_{D\in\text{Div}(S)}\frac{|U\cap L(D)^N|}{|L(D)|^N}.
\]
If $\overline{d}_S(U)=\underline{d}_S(U)$, the density $d_S(U)$ of $U$ at $S$ exists and equals $\overline{d}_S(U)=\underline{d}_S(U)$.

\subsection{Results}

\begin{lemma}[{\cite{globaldensityfunc}*{Theorem 2.1}}]
\label{globaldensity1}
For $P\in S^C$, let $U_P\subset K_P^N$ be a measurable set such that $\mu_P(\partial U_P)=0$. For a positive integer $M$, let $V_M$ be the set of $x\in \mathcal{O}_{K,S}^N$ such that $x\in U_P$ for some $P\in S^C$ with degree at least $M$. Suppose
$\lim_{M\rightarrow\infty}\overline{d}_S(V_M)=0$. Let $\mathcal{P}: \mathcal{O}_{K, S}^N\rightarrow 2^{S^C}, \mathcal{P}(a)\triangleq\{P\in S^C: a\in U_P\}$. Then:
\begin{enumerate}
    \item The sum $\sum_{P\in S^C}\mu_P(U_P)$ is convergent.
    \item For $T\subset 2^{S^C}$, $\nu(T):=d_S(\mathcal{P}^{-1}(T))$ exists. Also, $\nu$ defines a measure on $2^{S^C}$.
    \item The measure $\nu$ is concentrated at finite subsets of $S^C$ and for a finite set $T$ of places in $S^C$,
    \[
    \nu(T)=\prod_{P\in T}\mu_P(U_P)\prod_{P\in S^C\backslash T} (1-\mu_P(U_P)).
    \]
\end{enumerate}

\end{lemma}

\begin{lemma}[{\cite{globaldensityfunc}*{Theorem 2.2}}; {\cite{sqfree}*{Proof of Theorem 8.1}}]
\label{globaldensity2}

Let $f$ and $g$ be polynomials in $\mathcal{O}_{K, S}[x_1,\ldots,x_N]$ that are relatively prime. For $M\geq 1$, let $V_M$ be the set of $x\in \mathcal{O}_{K, S}^N$ such that $f(x)\equiv g(x)\equiv 0\pmod{\pi_P}$ for some $P\in S^C$ with degree at least $M$. Then, $\lim_{M\rightarrow\infty}\overline{d}_S(V_M)=0$.
\end{lemma}

\begin{lemma}[{\cite{sqfree}*{Proof of Theorem 8.1}}]
\label{lemma:sqfree}
Let $f\in\mathcal{O}_{K,S}[x_1,\ldots,x_N]$ be square-free as a polynomial in $K[x_1,\ldots,x_N]$. For $M\geq 1$, let $V_M$ be the set of $\mathcal{O}_{K,S}^N$ such that $f(x)\equiv 0 \pmod{\pi_P^2}$ for some $P\in S^C$ with degree at least $M$. Then, $\lim_{M\rightarrow\infty} \overline{d}_S(V_M)=0$.
\end{lemma}

In this paper, we consider global densities for elliptic curves over $K$ with coefficients in $\mathcal{O}_{K,S}$ in long Weierstrass form. We see that $W_S$ can be considered to be $\mathcal{O}_{K,S}^5$, and particularly, the global density definitions from above for $\mathcal{O}_{K,S}^5$ can be used on $W_S$. Similar methods are used in \cite{localglobal} for elliptic curves over $\mathbb{Q}$ with coefficients in $\mathbb{Z}$. Note that an elliptic curve must have a nonzero discriminant, meaning that not all curves in $W_S$ are elliptic curves. However, for $D\in \text{Div}(S)$, the number of curves in $W_S$ with discriminant $0$ that are elements of $L(D)^5$, where $W_S$ is considered to be $\mathcal{O}_{K,S}^5$, is $O(|L(D)|^4)$. Particularly, if proportions over elliptic curves in $W_S$ is considered rather than the proportions over $W_S$, the density is not changed.

\Cref{multitercond} is about the global density of non-minimal elliptic curves. Note that the lemma is used to prove \Cref{multiterglobal}.

\begin{proposition}
\label{multitercond}
For a positive integer $M$, let $V_M$ be the set of elliptic curves $E\in W_S$ such that there exists $P\in S^C$ with degree at least $M$ such that $N_P(E)\geq 1$. Then, $\lim_{M\rightarrow\infty}\overline{d}_S(V_M)=0$.
\end{proposition}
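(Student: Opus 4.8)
The plan is to contain $V_M$ inside a set of the type handled by \Cref{globaldensity2}. Set $c_4(E)=b_2(E)^2-24b_4(E)$; like $\Delta(E)$, this is a polynomial in $a_1,a_2,a_3,a_4,a_6$ with zero constant term, and it is invariant under every substitution $x\mapsto x+n$, $y\mapsto y+lx+m$ (the quantities $c_4$ and $\Delta$ change only under the $u$-rescaling). I first claim that $N_P(E)\geq 1$ forces $\pi_P\mid c_4(E)$ and $\pi_P\mid\Delta(E)$. Indeed, by \Cref{prop:multiterations} with $k=1$ there exist $l,m,n\in R_P$ such that $E'(l,m,n)$ has coefficients $a_i'\in\pi_P^iR_P\subseteq\pi_PR_P$ for $i\in\{1,2,3,4,6\}$; since $c_4$ and $\Delta$ have no constant term, evaluating them at such coefficients produces elements of $\pi_PR_P$, and by the invariance just noted $c_4(E)=c_4(E'(l,m,n))$ and $\Delta(E)=\Delta(E'(l,m,n))$ lie in $\pi_PR_P$.

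Next I claim that $c_4$ and $\Delta$ are relatively prime as polynomials over $\mathbb{F}_q$ (and hence over $\mathcal{O}_{K,S}$, coprimality being preserved by the field extension $\mathbb{F}_q\subseteq K$); I would verify this over $\mathbb{F}_p$ by cases. If $p\geq 5$, then $c_4=(a_1^2+4a_2)^2-24a_1a_3-48a_4$ is irreducible, being linear in $a_4$ with leading coefficient $-48\in\mathbb{F}_p^\times$, and it does not divide $\Delta$: the polynomial $c_4$ is free of $a_6$, whereas $\Delta$ has degree $2$ in $a_6$ with leading coefficient $-432\in\mathbb{F}_p^\times$ (from the term $-27b_6(E)^2$), so $\Delta$ does not vanish on the hypersurface $c_4=0$. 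If $p=3$, then $c_4=(a_1^2+a_2)^2$ and $\Delta=-b_2(E)^2b_8(E)+b_4(E)^3$, and $a_1^2+a_2$ is irreducible and does not divide $\Delta$ since $\Delta\equiv(a_1a_3-a_4)^3\not\equiv 0$ modulo $a_1^2+a_2$. If $p=2$, then $c_4=a_1^4$ and $\Delta=b_2(E)^2b_8(E)+b_6(E)^2+b_2(E)b_4(E)b_6(E)$, and $a_1$ does not divide $\Delta$ since $\Delta\equiv a_3^4\not\equiv 0$ modulo $a_1$. In every case $\gcd(c_4,\Delta)=1$.

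Combining the two claims: under the identification $W_S=\mathcal{O}_{K,S}^5$, every elliptic curve in $V_M$ lies in the set $V_M'$ obtained from \Cref{globaldensity2} with $(f,g)=(c_4,\Delta)$. Since $\overline{d}_S$ is monotone under inclusion, $0\le\overline{d}_S(V_M)\le\overline{d}_S(V_M')$, and $\overline{d}_S(V_M')\to 0$ by \Cref{globaldensity2}, which proves the proposition. The only point that requires care is the coprimality verification in characteristics $2$ and $3$, where $c_4$ degenerates to a perfect power; one can bypass it entirely by observing that the same use of \Cref{prop:multiterations} shows that $N_P(E)\geq 1$ forces $\pi_P\mid a_1$ and $\pi_P\mid a_3$ when $p=2$, and $\pi_P\mid(a_1^2+a_2)$ and $\pi_P\mid\Delta(E)$ when $p=3$, and each of these pairs is patently relatively prime.
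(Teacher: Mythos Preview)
Your proof is correct and follows the same strategy as the paper: use \Cref{prop:multiterations} to show that $N_P(E)\geq 1$ forces two coprime polynomials in the Weierstrass coefficients to vanish modulo $\pi_P$, then invoke \Cref{globaldensity2}. The only difference is cosmetic: you work with the single pair $(c_4,\Delta)$ and check its coprimality characteristic by characteristic, whereas the paper selects characteristic-dependent pairs from the start (essentially $(c_4,c_6)$ for $p\geq 5$, $(b_2,\Delta)$ for $p=3$, and $(a_1,\Delta)$ for $p=2$)---indeed your closing ``bypass'' remark reproduces exactly the paper's $p=2,3$ choices.
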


\begin{proof}
We prove this with casework on the characteristic $p$ of $K$. Suppose that $E$ is an elliptic curve in $G_P$ with equation $E:y^2+a_1xy+a_3y=x^3+a_2x^2+a_4x+a_6$ for $a_1,a_2,a_3,a_4,a_6\in R_P$ such that $N_P(E)\geq 1$.

Assume $p\geq 5$. We have that $E$ can be translated to the curve
\[
y^2=x^3+\left(-\frac{b_2(E)^2}{48}+\frac{b_4(E)}{2}\right)x-\frac{b_2(E)^3}{864}-\frac{b_2(E)b_4(E)}{24}+\frac{b_6(E)}{4}.
\]
Because $N_P(E)\geq 1$, using \Cref{prop:multiterations}, $-\frac{b_2(E)^2}{48}+\frac{b_4(E)}{2}\equiv 0\pmod{\pi_P}$ and $-\frac{b_2(E)^3}{864}-\frac{b_2(E)b_4(E)}{24}+\frac{b_6(E)}{4}\equiv 0\pmod{\pi_P}$. Then, \Cref{globaldensity2} with 
\[
f(x_1,x_2,x_3,x_4,x_6)=-\frac{(x_1^2+4x_2)^2}{48}+\frac{x_1x_3+2x_4}{2}
\]
and 
\[
g(x_1,x_2,x_3,x_4,x_6)=-\frac{(x_1^2+4x_2)^3}{864}-\frac{(x_1^2+4x_2)(x_1x_3+2x_4)}{24}+\frac{x_3^2+4x_6}{4}
\]
proves this proposition for $p\geq 5$.

Next, assume $p=3$. We have that $E$ can be translated to the curve
\[
y^2=x^3+\frac{b_2(E)}{4}x^2+\frac{b_4(E)}{2}x+\frac{b_6(E)}{4}
\]
Using \Cref{prop:multiterations}, $\frac{b_2(E)}{4}\equiv 0\pmod{\pi_P}$ from the coefficient of $x^2$. Additionally, $\Delta(E)\equiv 0\pmod{\pi_P}$. Next, \Cref{globaldensity2} with \begin{align*}
f(x_1,x_2,x_3,x_4,x_6)= -(x_1^2+x_2)^2(x_1^2x_6+x_2x_6-x_1x_3x_4+x_2x_3^2-x_4^2)+(x_1x_3+2x_4)^3
\end{align*}
and 
\[
g(x_1,x_2,x_3,x_4,x_6)=x_1^2+x_2
\]
proves this proposition for $p=3$.

Suppose $p=2$. Using \Cref{prop:multiterations}, $a_1\equiv 0\pmod{\pi_P}$ from the coefficient of $xy$. Also, $\Delta(E)\equiv 0\pmod{\pi_P}$. Therefore, \Cref{globaldensity2} with 
\[
f(x_1,x_2,x_3,x_4,x_6)=x_1^4(x_1^2x_6+x_1x_3x_4+x_2x_3^2+x_4^2)+x_3^4+x_1^3x_3^3
\]
and 
\[
g(x_1,x_2,x_3,x_4,x_6)=x_1
\]
proves this proposition for $p=2$.
\end{proof}

\section{Local Densities for \texorpdfstring{$p\geq 5$}{}}
\label{sec:locala}

\subsection{Setup}

Suppose that the characteristic of $K$ is $p\geq 5$. Let $P$ be a place of $K$. We compute the local densities over $K_P$ of Kodaira types $\mathfrak{r}$ and Tamagawa numbers $n$ for elliptic curves in $G_P$. Let $G_P^{(1)}$ be the set of curves
\[
y^2=x^3+a_4x+a_6
\]
over $K_P$ such that $a_4,a_6\in R_P$. Note that $G_P^{(1)}$ can be considered to be $R_P^2$. Define $\varphi: G_P\rightarrow G_P^{(1)}$ as the function such that if $E$ is a curve in $G_P$, $\varphi(E)$ is the curve in $G_P^{(1)}$ with equation
\[
\varphi(E): y^2=x^3+\left(-\frac{b_2(E)^2}{48}+\frac{b_4(E)}{2}\right)x-\frac{b_2(E)^3}{864}-\frac{b_2(E)b_4(E)}{24}+\frac{b_6(E)}{4}.
\]
If $E$ is an elliptic curve, $\varphi(E)$ is an elliptic curve isomorphic to $E$.

\begin{lemma}
\label{invdensity1}
If $U$ is an open subset of $G_P^{(1)}$, $\mu_P(\varphi^{-1}(U))=\mu_P(U)$.
\end{lemma}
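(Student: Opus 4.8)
The plan is to exhibit $\varphi$, after a measure-preserving change of coordinates on $G_P\cong R_P^5$, as the projection onto the two coordinates of $G_P^{(1)}\cong R_P^2$; the equality of measures is then immediate. Since $\varphi$ is given by polynomials in $a_1,a_2,a_3,a_4,a_6$ it is continuous, so $\varphi^{-1}(U)$ is open and its measure is defined. The hypothesis $p\ge 5$ enters only to ensure that $2$ and $3$—hence also $4,8,12,24,48,864$—are units in $R_P$; this is exactly where the argument fails for $p=2,3$, which is why those characteristics are treated separately in \Cref{sec:localb} and \Cref{sec:localc}.

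The one elementary fact used repeatedly is that an affine map $x\mapsto Mx+v$ with $M\in\mathrm{GL}_n(R_P)$ and $v\in R_P^n$ restricts to a bijection of $R_P^n$ onto itself with $|\det M|_P=1$, hence preserves $\mu_P$. Now change coordinates on $R_P^5$ in two steps. First, keeping $a_1,a_3$ fixed, send $(a_2,a_4,a_6)$ to $(b_2,b_4,b_6)=(a_1^2+4a_2,\ a_1a_3+2a_4,\ a_3^2+4a_6)$; this is an affine bijection of $R_P^3$ with linear part $\operatorname{diag}(4,2,4)$ (determinant $32$, a unit), and by Fubini over $(a_1,a_3)$ the induced self-map $(a_1,a_2,a_3,a_4,a_6)\mapsto(a_1,a_3,b_2,b_4,b_6)$ of $R_P^5$ is a measure-preserving bijection. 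Second, keeping $a_1,a_3,b_2$ fixed, send $(b_4,b_6)$ to $(A_4,A_6)=\bigl(-\tfrac{b_2^2}{48}+\tfrac{b_4}{2},\ -\tfrac{b_2^3}{864}-\tfrac{b_2b_4}{24}+\tfrac{b_6}{4}\bigr)$, which are precisely the coefficients of $\varphi(E)$; this is affine in $(b_4,b_6)$, lower triangular with diagonal $(\tfrac12,\tfrac14)$ and off-diagonal entry $-b_2/24$, so its linear part lies in $\mathrm{GL}_2(R_P)$ and its constant term $(-b_2^2/48,-b_2^3/864)$ lies in $R_P^2$. Again by Fubini, now over $(a_1,a_3,b_2)$, the induced self-map $(a_1,a_3,b_2,b_4,b_6)\mapsto(a_1,a_3,b_2,A_4,A_6)$ of $R_P^5$ is a measure-preserving bijection.

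Composing the two steps gives a measure-preserving self-bijection of $R_P^5$ under which $\varphi$ becomes the projection $(a_1,a_3,b_2,A_4,A_6)\mapsto(A_4,A_6)$, so that $\varphi^{-1}(U)$ corresponds to $R_P^3\times U$. Since $\mu_P$ is a product of normalized Haar measures on the coordinate factors, $\mu_P(R_P^3\times U)=\mu_P(R_P^3)\,\mu_P(U)=\mu_P(U)$, which is the claim. The argument involves no genuine obstacle beyond the bookkeeping of checking, at each step, that the relevant linear map lies in $\mathrm{GL}_n(R_P)$—equivalently that the constants occurring are units—so the only points I would double-check carefully are that the coordinate change is globally a bijection of $R_P^5$ (it is, since each step is fiberwise a bijection) and that the two displayed expressions are indeed the coefficients of $\varphi(E)$.
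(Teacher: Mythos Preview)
Your proof is correct. Both your argument and the paper's exploit the same triangular structure of $\varphi$: once $a_1,a_2,a_3$ (equivalently $a_1,a_3,b_2$) are fixed, the two output coordinates depend on $a_4,a_6$ by an affine map whose linear part is a unit. The paper implements this directly: it reduces to basic boxes $V=\{a_4'\in r_4+\pi_P^{n_4}R_P,\ a_6'\in r_6+\pi_P^{n_6}R_P\}$ and counts residues of $(a_1,\dots,a_6)$ modulo $\pi_P^M$, picking $a_1,a_2,a_3$ freely and then observing that $a_4$ and $a_6$ are each determined modulo the appropriate power. You instead package the same observation as a global measure-preserving change of coordinates on $R_P^5$ turning $\varphi$ into a coordinate projection, then invoke Fubini. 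Your version is a little more structural and handles all open $U$ at once without the reduction to boxes; the paper's version is more elementary and self-contained, avoiding any appeal to change-of-variables or Fubini on $R_P^n$. Either way the content is the invertibility of $2,3$ in $R_P$ when $p\ge 5$.
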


\begin{proof}
Let $V$ be the set of $y^2=x^3+a_4'x+a_6'$ with $a_4'\in r_4+\pi_P^{n_4}R_P$ and $a_6'\in r_6+\pi_P^{n_6}R_P$. It suffices to prove that $\mu_P(\varphi^{-1}(V))=\mu_P(V)=\frac{1}{Q^{n_4+n_6}}$ because all open subsets of $G_P^{(1)}$ can be written as a disjoint countable union of sets with the form of $V$. Suppose $E: y^2+a_1xy+a_3y=x^3+a_2x^2+a_4x+a_6\in G_P$. Then, $\varphi(E)\in V$ if and only if
\begin{equation}
\label{eq:coeffx}
-\frac{b_2(E)^2}{48}+\frac{b_4(E)}{2}\in r_4+\pi_P^{n_4}R_P
\end{equation}
and 
\begin{equation}
\label{eq:coeff1}
-\frac{b_2(E)^3}{864}-\frac{b_2(E)b_4(E)}{24}+\frac{b_6(E)}{4}\in r_6+\pi_P^{n_6}R_P.
\end{equation}

Assume that $\varphi(E)\in V$. Let $M=\max(n_4,n_6)$. First, select $a_1$, $a_2$, and $a_3$ modulo $\pi_P^M$. Each has $Q_P^M$ possible residues. Afterwards, $a_4$ will have $Q_P^{M-n_4}$ residues modulo $\pi_P^M$ from (\ref{eq:coeffx}); select the residue for $a_4$. Finally, $a_6$ has $Q_P^{M-n_6}$ residues modulo $\pi_P^M$ from (\ref{eq:coeff1}); select the residue for $a_6$. We see that if each of $a_1,a_2,a_3,a_4,a_6$ are taken modulo $\pi_P^M$, the number of combinations of residues is $Q_P^{5M-n_4-n_6}$. Also, because $a_i$ is modulo $\pi_P^M$ for $i\in\{1,2,3,4,6\}$, each combination of residues has a Haar measure of $\frac{1}{Q_P^{5M}}$. We are done.
\end{proof}

\subsection{Densities after multiple iterations of Tate's algorithm}

Let $k$ be a nonnegative integer. Suppose $S_k$ is the set of elliptic curves $E\in G_P^{(1)}$ such that $N_P(E)\geq k$.

Suppose $E$ is an elliptic curve in $G_P^{(1)}$ with equation $E:y^2=x^3+a_4x+a_6$. Assume $E\in S_k$. Then, using \Cref{prop:multiterations}, $l,m,n\in R_P$ exist such that 
\[
    \left(y+\frac{l}{\pi_P^k}x+\frac{m}{\pi_P^{3k}}\right)^2-\left(x+\frac{n}{\pi_P^{2k}}\right)^3-\frac{a_4}{\pi_P^{4k}}\left(x+\frac{n}{\pi_P^{2k}}\right)-\frac{a_6}{\pi_P^{6k}}\in R_P[x,y].
\]
The coefficient of $xy$ is $\frac{2l}{\pi_P^k}$, giving that $v_P(l)\geq k$, and the coefficient of $y$ is $\frac{2m}{\pi_P^{3k}}$, giving that $v_P(m)\geq 3k$. Also, the coefficient of $x^2$ is $\frac{3n-l^2}{\pi_P^{2k}}$, giving that $v_P(n)\geq 2k$. From this, we have that $v_P(a_4)\geq 4k$ and $v_P(a_6)\geq 6k$. 

Define the function $\phi_k: S_k\rightarrow S_0$, $y^2=x^3+a_4x+a_6\mapsto y^2=x^3+\frac{a_4}{\pi_P^{4k}}x+\frac{a_6}{\pi_P^{6k}}$. Note that $S_k\subset S_0\subset G_P^{(1)}$. From \Cref{densityec} and \Cref{invdensity1}, $\mu_P(S_0)=1$. Next, we show how we can use $\phi_k$ to compute densities for $S_k$.

\begin{lemma}
\label{invdensitymult1}
If $U$ is an open subset of $G_P^{(1)}$, $\mu_P(\phi_k^{-1}(U))=\frac{1}{Q_P^{10k}}\mu_P(U)$.
\end{lemma}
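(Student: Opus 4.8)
The plan is to follow the proof of \Cref{invdensity1}: reduce to basic open sets and then evaluate the Haar measure of an explicit coset. Every open $U\subset G_P^{(1)}\cong R_P^2$ is a countable disjoint union of sets of the form
\[
V=\{\,y^2=x^3+a_4'x+a_6' : a_4'\in r_4+\pi_P^{n_4}R_P,\ a_6'\in r_6+\pi_P^{n_6}R_P\,\},
\]
with $r_4,r_6\in R_P$ and $n_4,n_6\geq 0$; since $\phi_k^{-1}$ commutes with countable disjoint unions and $\mu_P$ is countably additive, it suffices to show $\mu_P(\phi_k^{-1}(V))=\frac{1}{Q_P^{10k}}\mu_P(V)=Q_P^{-(10k+n_4+n_6)}$ for each such $V$.

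First I would unwind the preimage. A curve $E:y^2=x^3+a_4x+a_6$ lies in $\phi_k^{-1}(V)$ precisely when $E\in S_k$ and $\pi_P^{-4k}a_4\in r_4+\pi_P^{n_4}R_P$, $\pi_P^{-6k}a_6\in r_6+\pi_P^{n_6}R_P$; that is, $\phi_k^{-1}(V)=S_k\cap W$ where
\[
W=(\pi_P^{4k}r_4+\pi_P^{4k+n_4}R_P)\times(\pi_P^{6k}r_6+\pi_P^{6k+n_6}R_P)\subset R_P^2.
\]
I then claim $\mu_P(S_k\cap W)=\mu_P(W)$. Indeed $W\subset B_k:=\pi_P^{4k}R_P\times\pi_P^{6k}R_P$; by \Cref{prop:multiterations} (applied with $l=m=n=0$) every elliptic curve in $B_k$ has $N_P\geq k$ and hence lies in $S_k$, while conversely $S_k\subset B_k$ by the discussion preceding this lemma. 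So $W\setminus S_k$ consists exactly of the curves in $W$ with discriminant $0$, and the argument of \Cref{densityec} (counting solutions of $\Delta\equiv 0$ modulo $\pi_P^M$ and letting $M\to\infty$) shows this set is $\mu_P$-null.

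Finally, $\mu_P(W)$ is immediate: the first coordinate runs over a coset of $\pi_P^{4k+n_4}R_P$, of measure $Q_P^{-(4k+n_4)}$, the second over a coset of $\pi_P^{6k+n_6}R_P$, of measure $Q_P^{-(6k+n_6)}$, and $G_P^{(1)}=R_P^2$ carries the product of the one-dimensional Haar measures. Multiplying gives $\mu_P(W)=Q_P^{-(10k+n_4+n_6)}=\frac{1}{Q_P^{10k}}\mu_P(V)$, and summing over the disjoint decomposition of $U$ yields the lemma. The only step needing care is the measure-zero argument that replaces $\phi_k^{-1}(V)$ by the clean coset $W$; the rest is the same coset bookkeeping as in \Cref{invdensity1}, now carrying the extra factor $\pi_P^{4k}$ (resp.\ $\pi_P^{6k}$) on the $a_4$ (resp.\ $a_6$) coordinate that produces $Q_P^{-10k}$.
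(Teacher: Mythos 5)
Your proof is correct and follows essentially the same route as the paper: reduce to basic cosets $V$, identify $\phi_k^{-1}(V)$ as $S_k$ intersected with the scaled coset $W$ via the valuation bounds $v_P(a_4)\geq 4k$, $v_P(a_6)\geq 6k$ (the forward containment $S_k\subset B_k$ from the preceding discussion, the reverse containment via \Cref{prop:multiterations} with $l=m=n=0$), and then replace $S_k\cap W$ by $W$ up to a $\mu_P$-null set of discriminant-zero curves. The paper reaches the same null-set conclusion by invoking $\mu_P(S_0)=1$, whereas you re-run the counting argument of \Cref{densityec}; both are fine, and your explicit isolation of the coset $W$ makes the factor $Q_P^{-10k}$ transparent.
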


\begin{proof}
Suppose $r_4, r_6\in R_P$. Also, suppose $n_4$ and $n_6$ are nonnegative integers. Let $V$ be the set of elliptic curves $y^2=x^3+a_4'x+a_6'$ with $a_4'\in r_4+\pi_P^{n_4}R_P$ and $a_6'\in r_6+\pi_P^{n_6}R_P$. Because $\mu_P(S_0)=1$, $\mu_P(V)=\frac{1}{Q_P^{n_4+n_6}}$. To prove the lemma, it suffices to prove that 
\[
\mu_P(\phi_k^{-1}(V))=\frac{1}{Q_P^{10k}}\mu_P(V)=\frac{1}{Q_P^{n_4+n_6+10k}}.
\]

Suppose $E:y^2=x^3+a_4x+a_6\in G_P^{(1)}$ is an elliptic curve. We prove that $E\in S_k$ and $\phi_k(E)\in V$ if and only if $\frac{a_4}{\pi_P^{4k}}\in r_4+\pi_P^{n_4}R_P$ and $\frac{a_6}{\pi_P^{6k}}\in r_6+\pi_P^{n_6}R_P$. If $\phi_k(E)\in V$, then $\frac{a_4}{\pi_P^{4k}}\in r_4+\pi_P^{n_4}R_P$ and $\frac{a_6}{\pi_P^{6k}}\in r_6+\pi_P^{n_6}R_P$. Assume that $\frac{a_4}{\pi_P^{4k}}\in r_4+\pi_P^{n_4}R_P$ and $\frac{a_6}{\pi_P^{6k}}\in r_6+\pi_P^{n_6}R_P$. From Tate's algorithm, we have that $E\in S_k$. Then, it is true that $\phi_k(E)\in V$.

Assume that $E\in S_k$ and $\phi_k(E)\in V$. This is true if and only if $a_4\in \pi_P^{4k}r_4+\pi_P^{n_4+4k}R$ and $a_6\in \pi_P^{6k}r_6+\pi_P^{n_6+6k}R$. Moreover, because $\mu_P(S_0)=1$, the density of curves $y^2=x^3+a_4x+a_6$ with discriminant $0$ such that $a_4\in \pi_P^{4k}r_4+\pi_P^{n_4+4k}$ and $a_6\in \pi_P^{6k}r_6+\pi_P^{n_6+6k}$ is $0$. Because of this, $\mu_P(\phi_k^{-1}(V))=\frac{1}{Q_P^{n_4+n_6+10k}}$, completing the proof.
\end{proof}

\subsection{Density calculations}

Note that the density of a set of curves in $G_P^{(1)}$ is the Haar measure of the set. In this subsection, we compute the density of the set of minimal elliptic curves with a given Kodaira type and Tamagawa number over $G_P^{(1)}$. This can be extended to non-minimal elliptic curves using \Cref{thm:multiterdensity}. Moreover, in this subsection, we use the fact that the set of curves in $G_P^{(1)}$ that have a discriminant equal to $0$ has a Haar measure of $0$.

Suppose the discriminant is not divisible by $\pi_P$. We compute the density for this set by considering $a_4$ and $a_6$ modulo $\pi_P$. Suppose $a_4\in r_4+\pi_PR_P$ and $a_6\in r_6+\pi_PR_P$. We find the number of pairs $(r_4, r_6)$ in $L_{P,1}^2$ such that $\left(\frac{r_4}{3}\right)^3+\left(\frac{r_6}{2}\right)^2\equiv 0\pmod{\pi_P}$. If $r_4=0$, $r_6$ has $1$ choice, and if $-\frac{r_4}{3}$ is a square modulo $\pi_P$, $r_6$ has $2$ choices. Otherwise, $r_6$ has $0$ choices. We see that the number of pairs $(r_4, r_6)$ is $Q_P$. Therefore, where each pair $(r_4, r_6)$ has a density of $\frac{1}{Q_P^2}$, the density of the discriminant not being divisible by $\pi_P$ is $\frac{Q_P-1}{Q_P}$. For this case, Tate's algorithm ends in step 1 and we get that $\delta_K(I_0,1,0; P)=\frac{Q_P-1}{Q_P}$.

Next, assume that the discriminant is divisible by $\pi_P$. Furthermore, assume that $a_4,a_6\not\equiv 0\pmod{\pi_P}$. Because there are $Q_P-1$ pairs $(r_4, r_6)$ in $L_{P,1}^2$ for this case, the total density is $\frac{Q_P-1}{Q_P^2}$. Let $\alpha$ be the element of $L_{P,1}$ such that $a_4\equiv -3\alpha^2\pmod{\pi_P}$ and $a_6\equiv 2\alpha^3\pmod{\pi_P}$. The singular point is $(\alpha, 0)$ and in step 2, $x$ is replaced with $x+n$ where $n=\alpha$. Because $\alpha\not\equiv 0\pmod{\pi_P}$, Tate's algorithm ends in step 2. The quadratic considered in step 2 is $T^2-3\alpha$. We see that for $\frac{Q_P-1}{2}$ values of $\alpha$, this quadratic has roots in $R_P/\pi_P R_P$ and $c=v_P(\Delta(E))$, where $c$ denotes the Tamagawa number of $E$. Otherwise, $c=1$ if $v_P(\Delta(E))$ is odd and $c=2$ if $v_P(\Delta(E))$ is even.

Let $N$ be a positive integer. Suppose $a_4\in r_4+\pi_P^NR_P$ and $a_6\in r_6+\pi_P^NR_P$. We find the number of pairs $(r_4, r_6)$ in $L_{P,1}^2$ such that $\left(\frac{r_4}{3}\right)^3+\left(\frac{r_6}{2}\right)^2\equiv 0\pmod{\pi_P^N}$ and $r_4, r_6\not=0$. Because there are $\frac{Q_P^N-Q_P^{N-1}}{2}$ nonzero residues that are squares modulo $\pi_P^N$, we have that the number of pairs $(r_4, r_6)$ is $Q_P^N-Q_P^{N-1}$. Therefore, the density of $v_P(\Delta(E))\geq N$ for $a_4, a_6\not\equiv 0\pmod{\pi_P}$ is $\frac{Q_P-1}{Q_P^{N+1}}$. 

Suppose $N$ is a positive integer. The density of $v_P(\Delta(E))=N$ is $\frac{Q_P-1}{Q_P^{N+1}}-\frac{Q_P-1}{Q_P^{N+2}}=\frac{(Q_P-1)^2}{Q_P^{N+2}}$. We therefore have that $\delta_K(I_1, 1, 0; P)=\frac{(Q_P-1)^2}{Q_P^3}$, $\delta_K(I_2, 2, 0; P)=\frac{(Q_P-1)^2}{Q_P^4}$, and 
\[
\delta_K(I_N, N, 0;P)=\delta_K\left(I_N,2\left\lfloor\frac{N}{2}\right\rfloor-N+2,0;P\right)=\frac{(Q_P-1)^2}{2Q_P^{N+2}}
\]
for $N\geq 3$. 

If $v_P(a_4), v_P(a_6)\geq 1$, the singular point modulo $\pi_P$ from step 2 of Tate's algorithm is $(0,0)$. The total density for this case is $\frac{1}{Q_P^2}$. If $v_P(a_6)=1$, the algorithm ends in step 3. For this case, we get that $\delta_K(II, 1, 0;P)=\frac{Q_P-1}{Q_P^3}$.

Assume that $v_P(a_6)\geq 2$. The total density for this case is $\frac{1}{Q_P^3}$. If $v_P(a_4)=1$, the algorithm ends in step 4 and we get that $\delta_K(III, 2,0;P)=\frac{Q_P-1}{Q_P^4}$.

Next, suppose $v_P(a_4)\geq 2$. The total density for this case is $\frac{1}{Q_P^4}$. If $v_P(a_6)=2$, the algorithm ends in step 5. From this, we have that $\delta_K(IV,1,0;P)=\delta_K(IV,3,0;P)=\frac{Q_P-1}{2Q_P^5}$.

Suppose $v_P(a_6)\geq 3$. The total density for this case is $\frac{1}{Q_P^5}$. In step 6, the polynomial $P(T)\in (R_P/\pi_PR_P)[T]$ has coefficient of $T^2$ equal to $0$. From adding multiples of $\pi_P^2$ to $a_4$, the choices for the coefficient of $T$ are $L_{P,1}$. Also, from adding multiples of $\pi_P^3$ to $a_6$, the choices for the constant term are $L_{P,1}$. Then, we have that each polynomial $P(T)\in (R_P/\pi_PR_P)[T]$ with coefficient of $T^2$ equal to $0$ corresponds to a density of $\frac{1}{Q_P^7}$ in $G_P^{(1)}$.

Assume $P(T)$ has distinct roots in $\overline{R_P/\pi_P R_P}$. The total number of choices for $P(T)$ is $Q_P^2-Q_P$; therefore, the total density for this case is $\frac{Q_P-1}{Q_P^6}$. We have that Tate's algorithm ends in step 6 here. The number of $P(T)$ with $0$, $1$, and $3$ roots in $R_P/\pi_P R_P$ is $\frac{Q_P^2-1}{3}$, $\frac{Q_P^2-Q_P}{2}$, and $\frac{Q_P^2-3Q_P+2}{6}$, respectively. With this, $\delta_K(I_0^*, 1, 0; P)=\frac{Q_P^2-1}{3Q_P^7}$, $\delta_K(I_0^*, 2, 0; P)=\frac{Q_P-1}{2Q_P^6}$, and $\delta_K(I_0^*, 4, 0; P)=\frac{Q_P^2-3Q_P+2}{6Q_P^7}$.

Next, assume that $P(T)$ has a double root and a simple root in $\overline{R_P/\pi_P R_P}$. Then, Tate's algorithm enters the subprocedure in step 7. For this case, the total number of $P(T)$ is $Q_P-1$ and the total density is therefore $\frac{Q_P-1}{Q_P^7}$. In \Cref{subsec:subden1}, we compute that $\delta_K(I_N^*,2,0;P)=\delta_K(I_N^*,4,0;P)=\frac{(Q_P-1)^2}{2Q_P^{N+7}}$ for all positive integers $N$.

Assume $P(T)$ has a triple root in $\overline{R_P/\pi_P R_P}$. For this case, the total number of $P(T)$ is $1$ and the total density is therefore $\frac{1}{Q_P^7}$. Because the coefficient of $T^2$ in $P(T)$ is $0$, the triple root is $0$. If $v_P(a_6)=4$, the algorithm ends in step 8. For this case, $\delta_K(IV^*, 1, 0; P)=\delta_K(IV^*, 3, 0; P) = \frac{Q_P-1}{2Q_P^8}$.

Next, assume that $v_P(a_6)\geq 5$. The total density for this case is $\frac{1}{Q_P^8}$. If $v_P(a_4)=3$, the algorithm ends in step 9. We then have that $\delta_K(III^*, 2, 0; P)=\frac{Q_P-1}{Q_P^9}$.

Suppose $v_P(a_4)\geq 4$. The total density for this case is $\frac{1}{Q_P^9}$. If $v_P(a_6)=5$, the algorithm ends in step 10. Therefore, $\delta_K(II^*,1,0; P)=\frac{Q_P-1}{Q_P^{10}}$.

With density $\frac{1}{Q_P^{10}}$, we have that $v_P(a_4)\geq 4$ and $v_P(a_6)\geq 6$, meaning that the curve is not minimal. That is, the curve will complete iteration $1$ and continue iteration $2$. Note that the density of non-minimal curves calculated from the algorithm matches \Cref{invdensitymult1}.

\subsection{Subprocedure density calculations}
\label{subsec:subden1}

Next, we study the densities for the subprocedure in step 7 of Tate's algorithm. We compute the subprocedure densities by studying the translation of $x$ in Tate's algorithm. In the step 7 subprocedure, because the coefficient of $y$ is initially $0$, there will be no translations of $y$.

Let $X$ be the set of elliptic curves $E\in G_P^{(1)}$ such that $N_P(E)=0$ and Tate's algorithm enters the step 7 subprocedure when used on $E$. For $E\in X$, let $L(E)$ be the number of iterations of the step 7 subprocedure that are completed when Tate's algorithm is used on $E$. For a nonnegative integer $N$, let $X_N$ be the set of $E\in X$ such that $L(E)\geq N$.

Suppose $N$ is an even nonnegative integer. Iteration $N$ of the step 7 subprocedure is completed if and only if $n\in R_P$ exists such that $v_P(n)=1$, $v_P(a_4+3n^2)\geq\frac{N+6}{2}$, and $v_P(n^3+3na_4+a_6)\geq N+4$. Suppose $n=n_1$ satisfies this condition. Suppose $n=n_2$ also satisfies this condition. We then have that $n_1^2\equiv n_2^2\pmod{\pi_P^{\frac{N+6}{2}}}$. This gives that $n_1$ is equivalent to $n_2$ or $-n_2$ modulo $\pi_P^{\frac{N+4}{2}}$. However, because $n_1^3+n_1a_4\equiv n_2^3+n_2a_4\pmod{\pi_P^{N+4}}$, we have that $v_P(n_1-n_2)\geq\frac{N+4}{2}$. Moreover, if $v_P(n_1-n_2)\geq\frac{N+4}{2}$, $n=n_2$ also satisfies the condition.

Next, suppose $N$ is an odd nonnegative integer. Iteration $N$ of the subprocedure is completed if and only if $n\in R_P$ exists such that $v_P(n)=1$, $v_P(a_4+3n_1^2)\geq\frac{N+5}{2}$, and $v_P(n^3+na_4+a_6)\geq N+4$. Similarly, we have that if $n=n_1$ satisfies the condition, $n=n_2$ satisfies the condition if and only if $v_P(n_1-n_2)\geq \frac{N+3}{2}$.

Suppose $N$ is a nonnegative integer. Suppose $n$ is an element of $L_{P,\left\lfloor\frac{N+4}{2}\right\rfloor}$ such that $v_P(n)=1$. Let $Y_{n,N}$ be the set of curves $x^3+3nx^2+a_4'x+a_6'$ such that $v_P(a_4')\geq\left\lfloor\frac{N+6}{2}\right\rfloor$ and $v_P(a_6')\geq N+4$. Note that $Y_{n,N}$ can be considered to be an open subset of $R_P^2$.

For $E\in X_N$, let $n_N(E)$ be the unique value of $n\in L_{P, \left\lfloor\frac{N+4}{2}\right\rfloor}$ such that $v_P(n)=1$, $v_P(a_4+3n^2)\geq\left\lfloor\frac{N+6}{2}\right\rfloor$, and $v_P(n^3+na_4+a_6)\geq N+4$. Let $\theta_N$ be the function such that if $E: y^2=x^3+a_4x+a_6$ is an element of $X_N$, 
\begin{align*}
\theta_N(E): y^2 & =(x+n_N(E))^3+a_4(x+n_N(E))+a_6 \\
& =x^3+3n_N(E)x^2+(a_4+3n_N(E)^2)x+n_N(E)a_4+a_6+n_N(E)^3.
\end{align*}

\begin{lemma}
\label{invdensitysub1} If $U$ is an open subset of $Y_{n,N}$, $\mu_P(\theta_N^{-1}(U))=\mu_P(U)$.
\end{lemma}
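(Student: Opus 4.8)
The plan is to follow the pattern of \Cref{invdensity1} and \Cref{invdensitymult1}; since $n$ is fixed here, the relevant transformation is an honest affine change of variables, so no reduction to ``boxes'' is needed. Identify $Y_{n,N}$ with a subset of $R_P^2$ via the coordinates $(a_4',a_6')$ of a curve $x^3+3nx^2+a_4'x+a_6'$, and introduce
\[
\Psi\colon R_P^2\to R_P^2,\qquad \Psi(a_4,a_6)=\big(a_4+3n^2,\ na_4+a_6+n^3\big),
\]
the map recording the effect on $(a_4,a_6)$ of replacing $x$ by $x+n$ in $y^2=x^3+a_4x+a_6$. Its linear part $\bigl(\begin{smallmatrix}1&0\\n&1\end{smallmatrix}\bigr)$ has determinant $1$, and $\Psi$ is a bijection of $R_P^2$ with inverse $(a_4',a_6')\mapsto(a_4'-3n^2,\ a_6'-na_4'+2n^3)$; hence $\Psi$ preserves $\mu_P$.

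The key claim is that for open $U\subseteq Y_{n,N}$ one has $\theta_N^{-1}(U)=\Psi^{-1}(U)\setminus Z$, where $Z\subseteq G_P^{(1)}$ is the set of curves with discriminant $0$. The inclusion $\theta_N^{-1}(U)\subseteq\Psi^{-1}(U)$ is immediate: if $E\colon y^2=x^3+a_4x+a_6$ lies in $X_N$ and $\theta_N(E)\in U$, then $\theta_N(E)$ has $x^2$-coefficient $3n_N(E)$ and lies in $Y_{n,N}$, so $n_N(E)=n$, whence $\theta_N(E)=\Psi(a_4,a_6)$; and $E$ is an elliptic curve, so $E\notin Z$. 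For the reverse inclusion, take $(a_4,a_6)\in\Psi^{-1}(U)$ with $E\notin Z$. From $U\subseteq Y_{n,N}$ we get $v_P(a_4+3n^2)\geq\lfloor(N+6)/2\rfloor\geq 3$ and $v_P(na_4+a_6+n^3)\geq N+4\geq 4$, which with $v_P(n)=1$ and $p\geq 5$ force $v_P(a_4)=2$ and $v_P(a_6)=3$. Since $v_P(a_4)=2<4$, the curve is not in $S_1$, so $N_P(E)=0$; and by $v_P(a_4)=2$, $v_P(a_6)=3$ Tate's algorithm reaches step 6 (as in the density computations above) with $P(T)=T^3+\overline{a_4/\pi_P^2}\,T+\overline{a_6/\pi_P^3}$, where $a_4\equiv-3n^2\pmod{\pi_P^3}$ and $na_4+a_6+n^3\equiv 0\pmod{\pi_P^4}$ give $\overline{a_4/\pi_P^2}=-3\bar n^2$ and $\overline{a_6/\pi_P^3}=2\bar n^3$ with $\bar n=\overline{n/\pi_P}\neq 0$. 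Then $P(T)$ has discriminant $-4(-3\bar n^2)^3-27(2\bar n^3)^2=0$ and nonzero linear coefficient, hence a double root and a distinct simple root, so Tate's algorithm enters the step 7 subprocedure and $E\in X$. The two valuation inequalities are exactly the conditions --- in the characterization of ``iteration $N$ of the step 7 subprocedure is completed'' stated just before the lemma --- which show that $E\in X_N$, and uniqueness of $n_N(E)$ within $L_{P,\lfloor(N+4)/2\rfloor}$ then forces $n_N(E)=n$; thus $\theta_N(E)=\Psi(a_4,a_6)\in U$, i.e.\ $E\in\theta_N^{-1}(U)$.

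Granting the claim and recalling that $\mu_P(Z)=0$, we conclude $\mu_P(\theta_N^{-1}(U))=\mu_P(\Psi^{-1}(U)\setminus Z)=\mu_P(\Psi^{-1}(U))=\mu_P(U)$, using measure-invariance of $\Psi$. The main obstacle is the reverse inclusion above: one must check that the two valuation inequalities inherited from $Y_{n,N}$, together with $v_P(n)=1$, really do push the curve into the step 7 subprocedure with $N_P(E)=0$. This is precisely where the hypothesis $p\geq 5$ enters --- through $v_P(2)=v_P(3)=0$ and the resulting automatic vanishing of the step-6 cubic's discriminant --- and it requires reading off $\overline{a_4/\pi_P^2}$ and $\overline{a_6/\pi_P^3}$ correctly from the congruences above.
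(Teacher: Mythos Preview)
Your argument is correct and follows essentially the same route as the paper's: both identify $\theta_N^{-1}(U)$, up to the measure-zero discriminant locus, with the preimage of $U$ under the coordinate change $(a_4,a_6)\mapsto(a_4+3n^2,\,na_4+a_6+n^3)$, and then verify this map preserves $\mu_P$. The only difference is packaging---the paper reduces to rectangular boxes $V$ and counts residues modulo $\pi_P^M$, whereas you invoke measure-invariance of an $R_P$-affine map with unit-determinant linear part directly; your reverse-inclusion step also spells out more carefully than the paper why the valuation conditions force $E$ into $X$ with $N_P(E)=0$.
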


\begin{proof}
Suppose $r_4, r_6\in R_P$. Also, suppose $n_4$ and $n_6$ are nonnegative integers. Assume that $v_P(r_4), n_4\geq\lfloor\frac{N+4}{2}\rfloor$ and $v_P(r_6), n_6\geq N+4$. Let $V\subset Y_{n,N}$ be the set of $E':y^2=x^3+3nx^2+a_4'x+a_6'$ such that $a_4'\in r_4+\pi_P^{n_4}R_P$ and $a_6'\in r_6+\pi_P^{n_6}R_P$. It suffices to prove that $\mu_P(\theta_N^{-1}(V))=\mu_P(V)$. Suppose $E: y^2=x^3+a_4x+a_6$ is an elliptic curve. 

We prove that that $E\in X_N$ and $\theta_N(E)\in V$ if and only if
\[
a_4+3n^2\in r_4+\pi_P^{n_4}R_P, na_4+a_6+n^3\in r_6+\pi_P^{n_6}R_P.
\]
Assume that $E\in X_N$ and $\theta_N(E)\in V$. Because $\theta_N(E)\in V$, we have that $n_N(E)=n$. Therefore, $a_4+3n^2\in r_4+\pi_P^{n_4}R_P$ and $na_4+a_6+n^3\in r_6+\pi_P^{n_6}R_P$. Next, assume that $a_4+3n^2\in r_4+\pi_P^{n_4}$ and $na_4+a_6+n^3\in r_6+\pi_P^{n_6}R_P$. Because $v_P(a_4+3n^2)\geq \left\lfloor\frac{N+6}{2}\right\rfloor$ and $v_P(na_4+a_6+n^3)\geq N+4$, $E\in X_N$. We then have that $\theta_N(E)\in V$.

Let $M=\max(n_4, n_6)$. Modulo $\pi_P^M$, there are $Q_P^{M-n_4}$ choices for the residue of $a_4$. After choosing $a_4$ modulo $\pi_P^M$, there are $Q_P^{M-n_6}$ choices for the residue of $a_6$ modulo $\pi_P^M$. Each of these combinations of residues modulo $\pi_P^M$ for $a_4$ and $a_6$ has a density of $\frac{1}{Q_P^{2M}}$ in $G_P^{(1)}$. The Haar measure of the $Q_P^{2M-n_4-n_6}$ combinations is $\frac{1}{Q_P^{n_4+n_6}}$. Because the set of curves in $G_P^{(1)}$ with discriminant $0$ has a Haar measure of $0$,
\[
\mu_P(\theta_N^{-1}(V)) = \frac{1}{Q_P^{n_4+n_6}} = \mu_P(V).
\]
This finishes the proof.
\end{proof}

Let $N$ be a positive integer. We compute the density of $I_N^*$. Let $n$ be an element of $L_{P, \left\lfloor\frac{N+3}{2}\right\rfloor}$ such that $v_P(n)=1$. We have that the Haar measure of the set of $E\in Y_{n,N-1}$ that do not complete iteration $N$ is $\frac{Q_P-1}{Q_P^{\left\lfloor\frac{N+5}{2}\right\rfloor+N+4}}$. With \Cref{invdensitysub1}, because there are $(Q_P-1)Q_P^{\left\lfloor\frac{N-1}{2}\right\rfloor}$ values of $n$, the density of $I_N^*$ is $\frac{(Q_P-1)^2}{Q_P^{N+7}}$. From adding multiples of $\pi_P^{N+3}$ to $a_6$, $c=2$ and $c=4$ have equal density. Therefore,
\[
\delta_K(I_N^*, 2, 0; P) = \delta_K(I_N^*, 4, 0; P) = \frac{(Q_P-1)^2}{2Q_P^{N+7}}.
\]

\section{Local Densities for \texorpdfstring{$p=3$}{}}
\label{sec:localb}

\subsection{Setup}

Suppose that the characteristic of $K$ is $p=3$. Let $P$ be a place of $K$ and $G_P^{(2)}$ be the set of curves
\[
y^2=x^3+a_2x^2+a_4x+a_6
\]
over $K_P$ such that $a_2, a_4, a_6\in R_P$. Note that $G_P^{(2)}$ can be considered to be $R_P^3$. Define $\varphi: G_P\rightarrow G_P^{(2)}$ as the function such that if $E$ is a curve in $G_P$, $\varphi(E)$ is the curve in $G_P^{(2)}$ with equation
\[
y^2=x^3+\frac{b_2(E)}{4}x^2+\frac{b_4(E)}{2}x+\frac{b_6(E)}{4}.
\]
Note that if $E$ is an elliptic curve, $E$ and $\varphi(E)$ are isomorphic.

\begin{lemma}
\label{invdensity2} If $U$ is an open subset of $G_P^{(2)}$, $\mu_P(\varphi^{-1}(U))=\mu_P(U)$.
\end{lemma}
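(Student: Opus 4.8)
The plan is to mirror the proof of \Cref{invdensity1} essentially verbatim, since $\varphi$ here has the same ``triangular'' shape. First I would reduce to basic open sets: every open subset of $G_P^{(2)}\cong R_P^3$ is a disjoint countable union of ``boxes''
\[
V=\left\{y^2=x^3+a_2'x^2+a_4'x+a_6':\ a_2'\in r_2+\pi_P^{n_2}R_P,\ a_4'\in r_4+\pi_P^{n_4}R_P,\ a_6'\in r_6+\pi_P^{n_6}R_P\right\},
\]
so by countable additivity of $\mu_P$ it suffices to prove $\mu_P(\varphi^{-1}(V))=\mu_P(V)=\frac{1}{Q_P^{n_2+n_4+n_6}}$ for each such $V$.

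Next I would unwind the condition $\varphi(E)\in V$ for a curve $E:y^2+a_1xy+a_3y=x^3+a_2x^2+a_4x+a_6$ in $G_P$. Since $p=3$, the elements $2$ and $4$ are units of $R_P$, so $\varphi(E)\in V$ is equivalent to the three conditions $\frac{b_2(E)}{4}\in r_2+\pi_P^{n_2}R_P$, $\frac{b_4(E)}{2}\in r_4+\pi_P^{n_4}R_P$, and $\frac{b_6(E)}{4}\in r_6+\pi_P^{n_6}R_P$. Using $b_2(E)=a_1^2+4a_2$, $b_4(E)=a_1a_3+2a_4$, $b_6(E)=a_3^2+4a_6$, these congruences pin down $a_2$ modulo $\pi_P^{n_2}$ in terms of $a_1$, pin down $a_4$ modulo $\pi_P^{n_4}$ in terms of $a_1$ and $a_3$, and pin down $a_6$ modulo $\pi_P^{n_6}$ in terms of $a_3$. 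The key observation is that for each fixed pair $(a_1,a_3)$, the map $(a_2,a_4,a_6)\mapsto\bigl(\frac{b_2(E)}{4},\frac{b_4(E)}{2},\frac{b_6(E)}{4}\bigr)$ is an affine bijection of $R_P^3$ whose linear part, after clearing the unit denominators, is the identity; hence it is a translation of $R_P^3$ and is measure preserving.

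Then I would finish with the same residue count as in \Cref{invdensity1}. Put $M=\max(n_2,n_4,n_6)$. Working modulo $\pi_P^M$, there are $Q_P^M$ choices for each of $a_1$ and $a_3$; having fixed these, there are $Q_P^{M-n_2}$ residues for $a_2$, then $Q_P^{M-n_4}$ residues for $a_4$, then $Q_P^{M-n_6}$ residues for $a_6$. This gives $Q_P^{5M-n_2-n_4-n_6}$ residue combinations in $(R_P/\pi_P^M)^5$, each of Haar measure $\frac{1}{Q_P^{5M}}$, so $\mu_P(\varphi^{-1}(V))=\frac{1}{Q_P^{n_2+n_4+n_6}}=\mu_P(V)$. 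Equivalently, one may phrase the last two steps together via Fubini over $(a_1,a_3)\in R_P^2$, using that each fiber of $\varphi^{-1}(V)$ above a point $(a_1,a_3)$ is a translate of the box of relative measure $\frac{1}{Q_P^{n_2+n_4+n_6}}$ defining $V$.

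I do not expect a genuine obstacle: the content is the same unit-Jacobian change of variables as in the $p\geq 5$ case. The only point requiring care is that in characteristic $3$ we are only dividing by $2$ and $4$ (both units in $R_P$) and never by $3$, so $\varphi$ is an honest polynomial map $R_P^5\to R_P^3$ and the translation structure on the last three coordinates is preserved; this is exactly why the simpler $\varphi$ of \Cref{sec:localb} replaces the one of \Cref{sec:locala}.
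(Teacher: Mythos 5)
Your proof is correct and is exactly what the paper intends: the paper's own proof of this lemma simply defers to the method of \Cref{invdensity1}, and your write-up carries out that same box-decomposition and residue-count argument, correctly noting that $2$ and $4$ are units in characteristic $3$ so the map is a measure-preserving translation in $(a_2,a_4,a_6)$ for each fixed $(a_1,a_3)$.
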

\begin{proof}
This can be proved using a method similar to the proof of \Cref{invdensity1}.
\end{proof}

\subsection{Densities after multiple iterations of Tate's algorithm}

Let $k$ be a nonnegative integer. Suppose $S_k$ is the set of elliptic curves $E\in G_P^{(2)}$ such that $N_P(E)\geq k$.

For an elliptic curve $E\in G_P^{(2)}$ with equation $E: y^2=x^3+a_2x^2+a_4x+a_6$, let $A_k(E)$ be the set of $n\in R_P$ such that
\[
y^2=x^3+\frac{a_2}{\pi_P^{2k}}x^2+\frac{2na_2+a_4}{\pi_P^{4k}}x+\frac{n^2a_2+na_4+a_6+n^3}{\pi_P^{6k}}
\]
has coefficients in $R_P$. The next proposition is useful for computing local densities for multiple iterations.

\begin{theorem}
\label{prop:unique1}
Let $E$ be an elliptic curve in $G_P^{(2)}$. Then, $E\in S_k$ if and only if a unique element $n\in L_{P,k}$ exists such that $n\in A_k(E)$.
\end{theorem}

\begin{proof}
Assume a unique element $n\in L_{P,k}$ exists such that $n\in A_k(E)$. Then, $A_k(E)$ is nonempty, and using \Cref{prop:multiterations}, $E\in S_k$.

Next, assume $E\in S_k$. Suppose $E\in S_k$ has equation $E:y^2=x^3+a_2x^2+a_4x+a_6$. From \Cref{prop:multiterations}, $l,m,n\in R_P$ exist such that
\[
    \left(y+\frac{l}{\pi_P^k}x+\frac{m}{\pi_P^{3k}}\right)^2=\left(x+\frac{n}{\pi_P^{2k}}\right)^3+\frac{a_2}{\pi_P^{2k}}\left(x+\frac{n}{\pi_P^{2k}}\right)^2+\frac{a_4}{\pi_P^{4k}}\left(x+\frac{n}{\pi_P^{2k}}\right)+\frac{a_6}{\pi_P^{6k}}
\]
has coefficients in $R_P$. From the coefficient of $xy$, $v_P(l)\geq k$, and from the coefficient of $y$, $v_P(m)\geq 3k$. Therefore, we have that
\[
y^2=\left(x+\frac{n}{\pi_P^{2k}}\right)^3+\frac{a_2}{\pi_P^{2k}}\left(x+\frac{n}{\pi_P^{2k}}\right)^2+\frac{a_4}{\pi_P^{4k}}\left(x+\frac{n}{\pi_P^{2k}}\right)+\frac{a_6}{\pi_P^{6k}}
\]
has coefficients in $R_P$. Note that $v_P(a_2)\geq 2k$ also. We therefore have that $A_k(E)$ is nonempty. 

Suppose $n\in A_k(E)$. From replacing $x$ with $x+n'$ for $n'\in R_P$, we have that $n+n'\pi_P^{2k}\in A_k(E)$. Therefore, $n\in L_{P,k}$ exists such that $n\in A_k(E)$.

Next, we prove uniqueness. Assume $n_1,n_2\in A_k(E)\cap L_{P,k}$. Let
\[
F:y^2=x^3+\frac{a_2}{\pi_P^{2k}}x^2+\frac{a_4}{\pi_P^{4k}}x+\frac{a_6}{\pi_P^{6k}}.
\]
For $1\leq i\leq 2$, let $F_i$ be $F$ with $x$ replaced by $x+\frac{n_i}{\pi_P^{2k}}$. Note that $F_1,F_2\in G_P^{(2)}$. 

From the coefficients of $x$ in $F_1$ and $F_2$, 
\[
2n_1a_2+a_4\equiv 2n_2a_2+a_4\equiv 0\pmod{\pi_P^{4k}}.
\]
Also, from the constant terms of $F_1$ and $F_2$,
\[
n_1^2a_2+n_1a_4+n_1^3 \equiv n_2^2a_2+n_2a_4+n_2^3\pmod{\pi_P^{6k}}.
\]
For the sake of contradiction, assume that $v_P(n_1-n_2)<2k$. Let $a=v_P(n_1-n_2)$. Note that 
\[
v_P(n_1^3-n_2^3)=v_P((n_1-n_2)^3)=3a.
\]
We have that
\[
n_1^2a_2+n_1a_4-n_2^2a_2-n_2a_4=(n_1-n_2)(n_1a_2+n_2a_2+a_4).
\]
Because $a_4\equiv n_1a_2\equiv n_2a_2\pmod{\pi_P^{4k}}$, 
\[
n_1a_2+n_2a_2+a_4\equiv 3a_4\equiv 0\pmod{\pi_P^{4k}}.
\]
From this,
\begin{align*}
v_P(n_1^2a_2+n_1a_4-n_2^2a_2-n_2a_4) & =
v_P((n_1-n_2)(n_1a_2+n_2a_2+a_4))
\geq a+4k>3a.
\end{align*}
Since $v_P(n_1^3-n_2^3)=3a$,
\[
v_P(n_1^2a_2+n_1a_4+n_1^3-n_2^2a_2-n_2a_4-n_2^3)=3a<6k, 
\]
which is a contradiction. Therefore, $v_P(n_1-n_2)\geq 2k$ and $n_1=n_2$.
\end{proof}

Using \Cref{prop:unique1}, for $E\in S_k$, let $n(E)$ be the unique $n\in L_{P,2k}$ such that $n\in A_k(E)$. Define $\phi_k: S_k\rightarrow S_0$ to be the function such that if $E\in S_k$ has equation $E:y^2=x^3+a_2x^2+a_4x+a_6$, $\phi_k(E)\in S_0$ has equation
\[
\phi_k(E):y^2=x^3+\frac{a_2}{\pi_P^{2k}}x^2+\frac{2n(E)a_2+a_4}{\pi_P^{4k}}x+\frac{n(E)^2a_2+n(E)a_4+a_6+n(E)^3}{\pi_P^{6k}}.
\]
Note that $S_k\subset S_0\subset G_P^{(2)}$. Also, using \Cref{densityec} and \Cref{invdensity2}, $\mu_P(S_0)=1$. For $n\in L_{P, 2k}$, suppose $S_{k,n}$ is the set of $E\in S_k$ such that $n(E)=n$ and let $\phi_{k,n}$ be $\phi_k$ restricted to $S_{k,n}$.

\begin{lemma}
\label{invdensitymult2a}
Suppose $n\in L_{P,k}$. If $U$ is an open subset of $G_P^{(2)}$, $\mu_P(\phi_{k,n}^{-1}(U))=\frac{1}{Q_P^{12k}}\mu_P(U)$.
\end{lemma}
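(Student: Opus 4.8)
The plan is to follow the template already used for \Cref{invdensity1}, \Cref{invdensitymult1}, and \Cref{invdensitysub1}: reduce to ``basic boxes'' and then compute a Haar measure explicitly, the only new feature being that the relevant change of coordinates now involves a translation as well as a dilation. By countable additivity of $\mu_P$ and the fact that the sets $V$ of curves $y^2=x^3+a_2'x^2+a_4'x+a_6'$ in $G_P^{(2)}$ with $a_i'\in r_i+\pi_P^{n_i}R_P$ for $i\in\{2,4,6\}$ (where $r_i\in R_P$ and $n_i\geq 0$) form a countable base for the topology of $G_P^{(2)}\cong R_P^3$ and any open set is a disjoint countable union of such $V$, it suffices to prove $\mu_P(\phi_{k,n}^{-1}(V))=\frac{1}{Q_P^{12k}}\mu_P(V)$ for these $V$. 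Since $\mu_P(S_0)=1$ (so the discriminant-zero locus of $G_P^{(2)}$ is $\mu_P$-null, by \Cref{densityec} and \Cref{invdensity2}), we have $\mu_P(V)=\frac{1}{Q_P^{n_2+n_4+n_6}}$.

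Next I would identify $\phi_{k,n}^{-1}(V)$ explicitly. Reading off the three coefficients in the definition of $\phi_{k,n}(E)$, for an elliptic curve $E:y^2=x^3+a_2x^2+a_4x+a_6$ in $G_P^{(2)}$ the conditions ``$E\in S_{k,n}$ and $\phi_{k,n}(E)\in V$'' are equivalent to
\[
a_2\in\pi_P^{2k}r_2+\pi_P^{2k+n_2}R_P,\qquad 2na_2+a_4\in\pi_P^{4k}r_4+\pi_P^{4k+n_4}R_P,\qquad n^2a_2+na_4+n^3+a_6\in\pi_P^{6k}r_6+\pi_P^{6k+n_6}R_P.
\]
The verification uses \Cref{prop:unique1}: these memberships force $v_P(a_2)\geq 2k$, $v_P(2na_2+a_4)\geq 4k$, and $v_P(n^2a_2+na_4+n^3+a_6)\geq 6k$, i.e. $n\in A_k(E)$, whence $E\in S_k$ and, by the uniqueness clause, $n(E)=n$ so that $E\in S_{k,n}$ and $\phi_{k,n}(E)\in V$; conversely, if $E\in S_{k,n}$ and $\phi_{k,n}(E)\in V$ the three memberships are immediate. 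In particular $\phi_{k,n}^{-1}(V)$ consists exactly of the elliptic curves in the box cut out by these three congruence conditions.

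Finally I would compute the Haar measure of that box. The affine map $(a_2,a_4,a_6)\mapsto(b_2,b_4,b_6)=(a_2,\,2na_2+a_4,\,n^2a_2+na_4+n^3+a_6)$ is a bijection of $R_P^3$ whose linear part is unipotent (lower triangular with $1$'s on the diagonal, determinant $1$), hence it preserves $\mu_P$; in the $b$-coordinates the box becomes the product of the cosets $\pi_P^{2k}r_2+\pi_P^{2k+n_2}R_P$, $\pi_P^{4k}r_4+\pi_P^{4k+n_4}R_P$, $\pi_P^{6k}r_6+\pi_P^{6k+n_6}R_P$, of measure $\frac{1}{Q_P^{(2k+n_2)+(4k+n_4)+(6k+n_6)}}=\frac{1}{Q_P^{12k+n_2+n_4+n_6}}$. (Equivalently one can integrate out $a_6$ and then $a_4$ directly: for fixed $a_2$ the admissible $a_4$ form one coset of $\pi_P^{4k+n_4}R_P$, and for fixed $a_2,a_4$ the admissible $a_6$ form one coset of $\pi_P^{6k+n_6}R_P$, as in the earlier lemmas.) Deleting the discriminant-zero locus changes nothing since it is $\mu_P$-null, so $\mu_P(\phi_{k,n}^{-1}(V))=\frac{1}{Q_P^{12k+n_2+n_4+n_6}}=\frac{1}{Q_P^{12k}}\mu_P(V)$, which gives the claim.

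The main obstacle is the bookkeeping of the second step: making sure \Cref{prop:unique1} is applied correctly so that the displayed product of cosets is \emph{exactly} the preimage $\phi_{k,n}^{-1}(V)$, i.e. that the translation $n$ is pinned down by the uniqueness of $n(E)$ and that no curve in the box belongs to two different sets $S_{k,n}$. Once the triangular structure of the coordinate change is noticed, the measure computation itself is routine.
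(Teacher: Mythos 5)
Your proposal is correct and follows essentially the same route as the paper: reduce to product boxes $V$, identify $\phi_{k,n}^{-1}(V)$ via the three coefficient congruences (using \Cref{prop:unique1} to pin down $n(E)=n$), and then compute the Haar measure of the resulting box, discarding the null discriminant-zero locus because $\mu_P(S_0)=1$. The only cosmetic difference is in the final measure computation — you invoke a unipotent, measure-preserving affine change of coordinates, whereas the paper counts residues modulo $\pi_P^M$ for $M=\max(n_2+2k,n_4+4k,n_6+6k)$ — but these are two presentations of the same calculation, and you note the counting version yourself.
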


\begin{proof}
Suppose $r_2, r_4, r_6\in R_P$. Also, suppose $n_2$, $n_4$, and $n_6$ are nonnegative integers. Let $V$ be the set of $y^2=x^3+a_2'x^2+a_4'x+a_6'$ such that $a_2'\in r_2+\pi_P^{n_2}R_P$, $a_4'\in r_4+ \pi_P^{n_4}R_P$, and $a_6'\in r_6+\pi_P^{n_6}R_P$. Suppose $E:y^2=x^3+a_2x^2+a_4x+a_6\in G_P^{(2)}$. 
Then, $E\in S_{k,n}$ and $\phi_{k,n}(E)\in V$ if and only if
\begin{align*}
& \frac{a_2}{\pi_P^{2k}}\in r_2+\pi_P^{n_2}R_P,\, \frac{2na_2+a_4}{\pi_P^{4k}}\in r_4+\pi_P^{n_4}R_P,\,\frac{n^2a_2+na_4+a_6+n^3}{\pi_P^{6k}}\in r_6+\pi_P^{n_6} R_P.
\end{align*}

Assume that $E\in S_{k,n}$ and $\phi_{k,n}(E)\in V$. Let $M=\max(n_2+2k, n_4+4k, n_6+6k)$. There are $Q_P^{M-n_2-2k}$ ways to pick $a_2$ modulo $\pi_P^M$. Afterwards, $a_4$ will have $Q_P^{M-n_4-4k}$ choices for its residue modulo $\pi_P^M$. Select the residue for $a_4$. Next, $a_6$ has $Q_P^{M-n_6-6k}$ choices for its residue modulo $\pi_P^M$. Select the residue for $a_6$. The number of combinations of residues is $Q_P^{3M-n_2-n_4-n_6-12k}$ and each combination of residues has a Haar measure of $Q_P^{-3M}$. Also, because $\mu_P(S_0)=1$, the set of curves with discriminant $0$ counted in these combinations of residues has a Haar measure $0$. Therefore, $\mu_P(\phi_{k,n}^{-1}(V))=\frac{1}{Q_P^{n_2+n_4+n_6+12k}}$. With this, $\mu_P(\phi_{k,n}^{-1}(U))=\frac{1}{Q_P^{12k}}\mu_P(U)$ for all open subsets $U$ of $G_P^{(2)}$.
\end{proof}

\begin{lemma}
\label{invdensitymult2b}
If $U$ is an open subset of $G_P^{(2)}$, $\mu_P(\phi_k^{-1}(U))=\frac{1}{Q_P^{10k}}\mu_P(U)$.
\end{lemma}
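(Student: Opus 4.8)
The plan is to split $S_k$ according to the value of $n(E)$ and add up the contributions supplied by \Cref{invdensitymult2a}. First I would use \Cref{prop:unique1}: it shows that every $E\in S_k$ has a well-defined $n(E)\in L_{P,2k}$, so that $S_k$ is the disjoint union $\bigsqcup_{n\in L_{P,2k}}S_{k,n}$ of the $Q_P^{2k}$ pieces $S_{k,n}=\{E\in S_k:n(E)=n\}$. Since $\phi_{k,n}$ is by definition the restriction of $\phi_k$ to $S_{k,n}$, for any open $U\subseteq G_P^{(2)}$ the preimage decomposes as $\phi_k^{-1}(U)=\bigsqcup_{n\in L_{P,2k}}\phi_{k,n}^{-1}(U)$.

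Then I would apply finite additivity of $\mu_P$ together with \Cref{invdensitymult2a}, which gives $\mu_P(\phi_{k,n}^{-1}(U))=\frac{1}{Q_P^{12k}}\mu_P(U)$ for each of the $n\in L_{P,2k}$. Summing over the $Q_P^{2k}$ values of $n$ yields
\[
\mu_P(\phi_k^{-1}(U))=\sum_{n\in L_{P,2k}}\mu_P(\phi_{k,n}^{-1}(U))=Q_P^{2k}\cdot\frac{1}{Q_P^{12k}}\mu_P(U)=\frac{1}{Q_P^{10k}}\mu_P(U),
\]
which is exactly the assertion.

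The argument carries essentially no analytic content beyond what is already packaged into \Cref{prop:unique1} and \Cref{invdensitymult2a}; the only things to check are the count $|L_{P,2k}|=|R_P/\pi_P^{2k}R_P|=Q_P^{2k}$ and the fact that $S_k=\bigsqcup_n S_{k,n}$ really is a disjoint decomposition (which is precisely the uniqueness half of \Cref{prop:unique1}), so that finite additivity applies without issue. If there is a point to single out as the crux, it is this reliance on uniqueness: dropping it would only give the inequality $\mu_P(\phi_k^{-1}(U))\le \frac{1}{Q_P^{10k}}\mu_P(U)$ rather than the exact factor.
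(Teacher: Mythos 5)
Your proof is correct and follows the same route as the paper: decompose $\phi_k^{-1}(U)$ as the disjoint union $\bigsqcup_{n\in L_{P,2k}}\phi_{k,n}^{-1}(U)$, apply \Cref{invdensitymult2a} to each piece, and sum the $Q_P^{2k}$ equal contributions to get $\frac{1}{Q_P^{10k}}\mu_P(U)$. You spell out the role of the uniqueness in \Cref{prop:unique1} a bit more explicitly than the paper does (which leaves the disjointness implicit in the definition of $S_{k,n}$), but there is no substantive difference in the argument.
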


\begin{proof}
Let $U$ be an open subset of $G_P^{(2)}$ We have that $\phi_k^{-1}(U)=\bigsqcup_{n\in L_{P, 2k}} \phi_{k,n}^{-1}(U)$. Using \Cref{invdensitymult2a},
\[
\mu_P(\phi_k^{-1}(U))=\sum_{n\in L_{P,2k}}\mu_P(\phi_{k,n}^{-1}(U))=\sum_{n\in L_{P,2k}} \frac{1}{Q_P^{12k}}\mu_P(U)= \frac{1}{Q_P^{10k}}\mu_P(U),
\]
completing the proof.
\end{proof}

\subsection{Density calculations for \texorpdfstring{$v_P(a_2)=0$}{}}

Suppose $v_P(a_2)=0$. The density for this case over $G_P^{(2)}$ is $\frac{Q_P-1}{Q_P}$. The discriminant is $-a_2^3a_6+a_2^2a_4^2-a_4^3$.

From adding multiples of $\pi_P$ to $a_6$, the set of curves with discriminant not divisible by $\pi_P$ has density $\frac{(Q_P-1)^2}{Q_P^2}$. Then, we add $\frac{(Q_P-1)^2}{Q_P^2}$ to $\delta_K(I_0,1,0;P)$.

Assume the discriminant is divisible by $\pi_P$. The algorithm ends in step 2. Because $v_P(a_2)=0$, the coefficient of $a_6$ in the discriminant is not divisible by $\pi_P$. Then, we see that for $N\geq 0$, the density over $G_P^{(2)}$ of curves such that $v_P(a_2)=0$ and $v_P(\Delta(E))=N$ is $\frac{(Q_P-1)^2}{Q_P^{N+2}}$. If $a_2\equiv r_2\pmod{\pi_P}$ for $r_2\in L_{P,1}$ such that $r_2\not=0$, $T^2+a_2$ is irreducible over $R_P/\pi_P R_P$ for $\frac{Q_P-1}{2}$ values of $r_2$. Using step 2 of Tate's algorithm, we have that $\delta_K(I_1, 1, 0; P)=\frac{(Q_P-1)^2}{Q_P^3}$, $\delta_K(I_2, 2, 0; P)=\frac{(Q_P-1)^2}{Q_P^4}$, and 
\[
\delta_K(I_N, N, 0;P)=\delta_K\left(I_N,2\left\lfloor\frac{N}{2}\right\rfloor-N+2,0;P\right)=\frac{(Q_P-1)^2}{2Q_P^{N+2}}
\]
for $N\geq 3$. 

\subsection{Density calculations for \texorpdfstring{$v_P(a_2)\geq 1$}{}}

Next, suppose $v_P(a_2)\geq 1$. The density for this case is $\frac{1}{Q_P}$ and modulo $\pi_P$, the discriminant is $-a_4^3$.

Assume the discriminant is not divisible by $\pi_P$. This occurs if and only if $a_4$ is not divisible by $\pi_P$ and the density for this case is $\frac{Q_P-1}{Q_P^2}$. Adding this density to $\delta_K(I_0,1,0;P)$ gives that $\delta_K(I_0,1,0;P)=\frac{Q_P-1}{Q_P}$.

Next, assume the discriminant is divisible by $\pi_P$. The total density for the following cases will be $\frac{1}{Q_P^2}$. Suppose $\alpha_1$ is an element of $L_{P, 1}$ such that $a_6+\alpha_1^3\equiv 0\pmod{\pi_P}$. A singular point is $(\alpha_1,0)$. We have that $x$ is replaced with $x+n$ where $n=\alpha_1$. The resulting curve has equation
\[
y^2=(x+n)^3+a_2(x+n)^2+a_4(x+n)+a_6.
\]
We have that $n^2a_2+n a_4+a_6+n^3$ is not divisible by $\pi_P^2$ with density $\frac{Q_P-1}{Q_P^3}$ by adding multiples of $\pi_P$ to $a_6$. Afterwards, we obtain that $\delta_K(II,1,0;P)=\frac{Q_P-1}{Q_P^3}$.

Assume $n^2a_2+n a_4+a_6+n^3$ is divisible by $\pi_P^2$. The total density for this case is $\frac{1}{Q_P^3}$. The density of $v_P(2n a_2+a_4)=1$ is $\frac{Q_P-1}{Q_P^4}$ from replacing $a_4$ with $a_4+\pi_Pd$ and $a_6$ with $a_6-\alpha_1\pi_Pd$ for $d\in L_{P,1}$. If $v_P(2na_2+a_4)=1$, the algorithm ends in step 4. We then have that $\delta_K(III,2,0;P)=\frac{Q_P-1}{Q_P^4}$.

Assume $2na_2+a_4$ is divisible by $\pi_P^2$. The total density for this case is $\frac{1}{Q_P^4}$. We have that $v_P(n^2 a_2+n a_4+a_6+n^3)=2$ with density $\frac{Q_P-1}{Q_P^5}$ from adding multiples of $\pi_P^2$ to $a_6$. If this is true, the algorithm ends in step 5. Afterwards, we have that $\delta_K(IV,1,0;P)=\delta_K(IV,3,0;P)=\frac{Q_P-1}{2Q_P^5}$.

Suppose $v_P(n^2 a_2+n a_4+a_6+n^3)\geq 3$. The total density for this case is $\frac{1}{Q_P^5}$. In step 6, there is no translation. Suppose $a_2$ is replaced by $a_2+d_1\pi_P$, $a_4$ is replaced with $a_4-2\alpha_1d_1\pi_P$, and $a_6$ is replaced with $a_6+\alpha_1^2d_1\pi_P$ for $d_1\in L_{P, 1}$. Note that the previous parts of the algorithm will not be changed. However, this changes the coefficient of $x^2$ from $a_2$ to $a_2+d_1\pi_P$, which changes the coefficient of $T^2$ of $P(T)$ in step 6. Next, replace $a_4$ with $a_4+d_2\pi_P^2$ and $a_6$ with $a_6-\alpha_1d_2\pi_P^2$ for $d_2\in \pi_P$. Similarly, this does not change the previous parts of the algorithm. However, $d_2\pi_P^2$  will be added to the coefficient of $x$, which adds $d_2$ to the coefficient of $T$ of $P(T)$. Afterwards, replace $a_6$ with $a_6+d_3\pi_P^3$ for $d_3\in L_{P, 1}$. This adds $d_3$ to the constant term $P(T)$. With this, the choices for $P(T)$ are the monic polynomials with degree $3$ in $(R_P/\pi_PR_P)[T]$; each choice for $P(T)$ corresponds to a density of $\frac{1}{Q_P^8}$. Moreover, the number of $P(T)$ with a double root and triple root are $Q_P(Q_P-1)$ and $Q_P$, respectively. 

Assume $P(T)$ has distinct roots. We have that the algorithm ends in step 6, with $\delta_K(I_0^*, 1, 0; P)=\frac{Q_P^2-1}{3Q_P^7}$, $\delta_K(I_0^*, 2, 0; P)=\frac{Q_P-1}{2Q_P^6}$, and $\delta_K(I_0^*,4,0;P)=\frac{Q_P^2-3Q_P+2}{6Q_P^7}$.

Assume $P(T)$ has a double root. For this case, Tate's algorithm ends in step 7 and the total density is $\frac{Q_P-1}{Q_P^7}$. In \Cref{subsec:subden2}, we compute that $\delta_K(I_N^*,2,0; P)=\delta_K(I_N^*,4,0; P)=\frac{(Q_P-1)^2}{2Q_P^{N+7}}$ for all positive integers $N$.

Next, assume $P(T)$ has a triple root. The density for this case is $\frac{1}{Q_P^7}$. Let $\alpha_2$ be the element of $L_{P,1}$ such that
\[
n^2a_2+na_4+a_6+n^3\equiv -\pi_P^3\alpha_2^3 \pmod{\pi_P^4}.
\]
Then, for the translation in step 8, we let $n=\alpha_1+\alpha_2\pi_P$. Suppose $v_P(n^2a_2+na_4+a_6+n^3)=4$. This occurs with density $\frac{Q_P-1}{Q_P^8}$ by adding multiples of $\pi_P^4$ to $a_6$. In this case, Tate's algorithm ends in step 8, and $\delta_K(IV^*,1,0;P)=\delta_K(IV^*,3,0;P)=\frac{Q_P-1}{2Q_P^8}$.

Assume $v_P(n^2a_2+na_4+a_6+n^3)\geq 5$. The total density for this case is $\frac{1}{Q_P^8}$. Consider replacing $a_4$ with $a_4+d\pi_P^3$ and $a_6$ with $a_6-(\alpha_1+\alpha_2\pi_P)d\pi_P^3$ for $d\in L_{P,1}$. This does not change previous parts of the algorithm but adds $d\pi_P^3$ to the coefficient of $x$. Therefore, $v_P(2na_2+a_4)=3$ with density $\frac{Q_P-1}{Q_P^9}$. For this, we have that Tate's algorithm ends in step 9 and $\delta_K(III^*,2,0;P)=\frac{Q_P-1}{Q_P^9}$.

Suppose $v_P(2na_2+a_4)\geq 4$. The total density of this case is $\frac{1}{Q_P^9}$. From adding multiples of $\pi_P^6$ to $a_6$, $v_P(n^3+a_2n^2+a_4n+a_6)=5$ with density $\frac{Q_P-1}{Q_P^{10}}$. Also, if $v_P(n^3+a_2n^2+a_4n+a_6)=5$, the algorithm ends in step 10. This gives that $\delta_K(II^*,1,0;P)=\frac{Q_P-1}{Q_P^{10}}$.

Similarly, the non-minimal curves have density $\frac{1}{Q_P^{10}}$.

\subsection{Subprocedure density calculations}
\label{subsec:subden2}

Let $X$ be the set of elliptic curves $E\in G_P^{(2)}$ such that $N_P(E)=0$ and Tate's algorithm enters the step 7 subprocedure when used on $E$. For $E\in X$, let $L(E)$ be the number of iterations of the step 7 subprocedure that are completed when Tate's algorithm is used on $E$. For a nonnegative integer $N$, let $X_N$ be the set of $E\in X$ such that $L(E)\geq N$. 

Suppose $N$ is an even nonnegative integer. Iteration $N$ of the step 7 subprocedure is completed if and only if $n\in R_P$ exists such that $v_P(a_2)=1$, $v_P(2n a_2+a_4)\geq\frac{N+6}{2}$, and $v_P(n^3+n^2a_2+na_4+a_6)\geq N+4$. Assume $n=n_1$ satisfies the condition. Suppose $n=n_2$ satisfies the condition also. Because $v_P(a_2)=1$, $v_P(n_1-n_2)\geq\frac{N+4}{2}$. Next, assume that $v_P(n_1-n_2)\geq\frac{N+4}{2}$. We show that $n=n_2$ also satisfies the condition. Clearly, $v_P(2n_2a_2+a_4)\geq\frac{N+6}{2}$. Moreover, we have that
\[
n_2^2a_2+n_2a_4 = n_1^2a_2+n_1a_4+\frac{1}{2}(n_2-n_1)((2n_1a_2+a_4)+(2n_2a_2+a_4)).
\]
Therefore, $v_P(n_2^3+n_2^2a_2+n_2a_4+a_6)\geq N+4$. We have that $n=n_2$ satisfies the condition if and only if $v_P(n_1-n_2)\geq \frac{N+4}{2}$.

Next, suppose $N$ is an odd positive integer. Iteration $N$ of the step 7 subprocedure is completed if and only if $n\in R_P$ exists such that $v_P(n^2 a_2 + na_4+a_6+n^3)\geq N+4$ and $v_P(2n a_2 + a_4)\geq\frac{N+5}{2}$. Assume $n=n_1$ satisfies the condition. Similarly to when $N$ is even, we have that $n=n_2$ also satisfies the condition if and only if $v_P(n_1-n_2)\geq \frac{N+3}{2}$.

Suppose $N$ is a nonnegative integer. Let $Y_N$ be the set of curves $y^2=x^3+a_2'x^2+a_4'x+a_6'$ with $v_P(a_2')=1$, $v_P(a_4')\geq\left\lfloor\frac{N+6}{2}\right\rfloor$, and $v_P(a_6')\geq\ N+4$. For $E\in X_N$, let $n_N(E)$ be the unique value of $n$ in $L_{P, \left\lfloor\frac{N+4}{2}\right\rfloor}$ from above. Suppose $\theta_N(E)$, with $\theta_N: X_N\rightarrow Y_N$, is the curve
\begin{align*}
\theta_N(E): y^2 & =(x+n_N(E))^3+a_2(x+n_N(E))^2+a_4(x+n_N(E))+a_6 \\
& = x^3 + a_2x^2 + (2n_N(E)a_2+a_4)x + n_N(E)^2 a_2 +n_N(E) a_4 +a_6.
\end{align*}

\begin{lemma}
\label{invdensitysub2}
If $U$ is an open subset of $Y_N$, $\mu_P(\theta_N^{-1}(U))=Q_P^{\left\lfloor\frac{N+4}{2}\right\rfloor}\mu_P(U)$.
\end{lemma}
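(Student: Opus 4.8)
The plan is to follow the template of the proof of \Cref{invdensitysub1}: reduce to basic congruence boxes in $G_P^{(2)}$ and count residues modulo a high power of $\pi_P$. The one genuinely new feature is the factor $Q_P^{\lfloor(N+4)/2\rfloor}$, which appears because the target $Y_N$ is not attached to a single fixed translation amount the way $Y_{n,N}$ was in the $p\ge 5$ case; consequently the fibre of $\theta_N$ over a generic point of $Y_N$ contains exactly one curve for each of the $Q_P^{\lfloor(N+4)/2\rfloor}=|L_{P,\lfloor(N+4)/2\rfloor}|$ admissible values of the translation parameter, and this multiplicity is precisely the normalizing exponent.

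First I would reduce to boxes. Every open subset of $Y_N$ is a countable disjoint union of sets
\[
V=\{\,y^2=x^3+a_2'x^2+a_4'x+a_6'\ :\ a_2'\in r_2+\pi_P^{m_2}R_P,\ a_4'\in r_4+\pi_P^{m_4}R_P,\ a_6'\in r_6+\pi_P^{m_6}R_P\,\}
\]
with $v_P(r_2)=1$, $m_2\ge 2$, $v_P(r_4),m_4\ge\lfloor(N+6)/2\rfloor$, and $v_P(r_6),m_6\ge N+4$; since $\mu_P$ is countably additive and $\mu_P(V)=Q_P^{-(m_2+m_4+m_6)}$, it suffices to show $\mu_P(\theta_N^{-1}(V))=Q_P^{\lfloor(N+4)/2\rfloor-m_2-m_4-m_6}$. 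To describe $\theta_N^{-1}(V)$, I would use that for $E\colon y^2=x^3+a_2x^2+a_4x+a_6$ the curve $\theta_N(E)$ has $x^2$-coefficient $a_2$, $x$-coefficient $2n_N(E)a_2+a_4$, and constant term $n_N(E)^3+a_2n_N(E)^2+a_4n_N(E)+a_6$. Combining this with the uniqueness of the admissible translation amount proved in the two paragraphs preceding the lemma, one checks that $E\in\theta_N^{-1}(V)$ if and only if $a_2\in r_2+\pi_P^{m_2}R_P$ and there is a (then necessarily unique) $n\in L_{P,\lfloor(N+4)/2\rfloor}$ with $2na_2+a_4\in r_4+\pi_P^{m_4}R_P$ and $n^3+a_2n^2+a_4n+a_6\in r_6+\pi_P^{m_6}R_P$; the nontrivial implication is that these congruences already force $v_P(a_2)=1$, $v_P(2na_2+a_4)\ge\lfloor(N+6)/2\rfloor$, and $v_P(n^3+a_2n^2+a_4n+a_6)\ge N+4$ (because $V\subseteq Y_N$), so by the completion criterion recalled before the lemma $E\in X_N$ with $n=n_N(E)$.

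Next I would count residues. Fix $M\ge\max(m_2,m_4,m_6)$ and work modulo $\pi_P^M$. There are $Q_P^{M-m_2}$ admissible residues of $a_2$; for each of these and each of the $Q_P^{\lfloor(N+4)/2\rfloor}$ elements $n\in L_{P,\lfloor(N+4)/2\rfloor}$, the condition on $a_4$ fixes it modulo $\pi_P^{m_4}$ ($Q_P^{M-m_4}$ residues mod $\pi_P^M$) and then the condition on $a_6$ fixes it modulo $\pi_P^{m_6}$ ($Q_P^{M-m_6}$ residues). Distinct $n$ yield disjoint residue classes of $a_4$ modulo $\pi_P^{m_4}$: two distinct elements of $L_{P,\lfloor(N+4)/2\rfloor}$ differ in valuation by less than $\lfloor(N+4)/2\rfloor$, so — as $2$ is a unit and $v_P(a_2)=1$ — the difference of the corresponding values $2na_2$ has valuation at most $\lfloor(N+4)/2\rfloor<\lfloor(N+6)/2\rfloor\le m_4$. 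Hence there is no double counting, and together with the uniqueness of $n$ the number of triples $(a_2,a_4,a_6)$ modulo $\pi_P^M$ lying in $\theta_N^{-1}(V)$ is $Q_P^{3M-m_2-m_4-m_6+\lfloor(N+4)/2\rfloor}$, each of measure $Q_P^{-3M}$. Since $\mu_P(S_0)=1$ by \Cref{densityec} and \Cref{invdensity2}, the discriminant-zero curves contribute measure $0$, and therefore $\mu_P(\theta_N^{-1}(V))=Q_P^{\lfloor(N+4)/2\rfloor}\mu_P(V)$, as required.

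The main obstacle is the middle step: making the equivalence describing $\theta_N^{-1}(V)$ fully precise, and in particular verifying that the congruence conditions alone — with no prior assumption that $E$ even reaches the step 7 subprocedure — already guarantee $E\in X_N$ (one must trace a few steps of Tate's algorithm on the $n$-translate of $E$ to see it lands in step 7 with an honest double root), together with checking that the correct normalizing exponent is $\lfloor(N+4)/2\rfloor=\log_{Q_P}|L_{P,\lfloor(N+4)/2\rfloor}|$ rather than, say, the number of admissible $n$ with $v_P(n)=1$. This is exactly where the uniqueness analysis preceding the lemma (the congruence $v_P(n_1-n_2)\ge\lfloor(N+4)/2\rfloor$ between admissible translation amounts, which relies on $v_P(a_2)=1$) carries the argument.
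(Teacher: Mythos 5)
Your proof is correct and follows essentially the same route as the paper: the paper partitions $X_N$ into the fibers $X_{N,n}$ over $n\in L_{P,\lfloor(N+4)/2\rfloor}$, shows each restricted map $\theta_{N,n}$ is measure-preserving (by the residue-counting template of \Cref{invdensitysub1}), and sums; your argument performs the same count directly, with the $Q_P^{\lfloor(N+4)/2\rfloor}$ values of $n$ appearing in the enumeration and the disjointness across $n$ verified from $v_P(a_2)=1$. (Incidentally, you are right that the constant term of $\theta_N(E)$ is $n_N(E)^3+a_2n_N(E)^2+a_4n_N(E)+a_6$; the displayed formula in the paper drops the $n_N(E)^3$.)
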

\begin{proof}
Suppose $n\in L_{P, \left\lfloor\frac{N+4}{2}\right\rfloor}$. Let $X_{N,n}$ be the set of $E\in X_N$ with $n_N(E)=n$ and $\theta_{N,n}$ be $\theta_N$ restricted to $X_{N,n}$. Suppose $U$ is an open subset of $Y_N$. Using a method similar to the proof of \Cref{invdensitysub1}, we have that
\[
\mu_P(\theta_{N,n}^{-1}(U)) = \mu_P(U).
\]
Because there are $Q_P^{\left\lfloor\frac{N+4}{2}\right\rfloor}$ values of $n$, the result follows.
\end{proof}

Suppose $N$ is a positive integer. Using \Cref{invdensitysub2}, we can compute the density of the curves $E$ with $N_P(E)=0$ that have type $I_N^*$ and Tamagawa number $2$ or $4$. The Haar measure of the curves in $Y_{N-1}$ that end in iteration $N$ is $\frac{(Q_P-1)^2}{Q_P^{N+6+\left\lfloor\frac{N+5}{2}\right\rfloor}}$. With \Cref{invdensitysub2}, we have that $\delta_K(I_N^*, 2, 0; P)=\delta_K(I_N^*, 4, 0; P)=\frac{(Q_P-1)^2}{2Q_P^{N+7}}$; note that we can add multiples of $\pi_P^{N+3}$ to $a_6$ to deduce that $c=2$ and $c=4$ have the same density.

\section{Local Densities for \texorpdfstring{$p=2$}{}}
\label{sec:localc}

\subsection{Setup}

Assume that the characteristic of $K$ is $p=2$. Let $P$ be a place of $K$ and $G_P^{(3)}$ be the set of curves
\[
y^2+a_1xy+a_3y=x^3+a_4x+a_6
\]
over $K_P$ such that $a_1,a_3,a_4,a_6\in R_P$. Note that $G_P^{(3)}$ can be considered to be $R_P^4$. Define $\varphi: G_P\rightarrow G_P^{(3)}$ as the function such that if $E$ is the curve in $G_P$ with equation $E: y^2+a_1xy+a_3y=x^3+a_2x^2+a_4x+a_6$, $\varphi(E)$ is the curve in $G_P^{(3)}$ with equation
\[
\varphi(E):y^2+a_1xy+\left(a_3-\frac{a_1a_2}{3}\right)y=x^3+\left(a_4-\frac{a_2^2}{3}\right)x+\frac{2a_2^3}{27}-\frac{a_2a_4}{3}+a_6.
\]
Note that if $E$ is an elliptic curve, $E$ and $\varphi(E)$ are isomorphic.

\begin{lemma}
\label{invdensity3} 
If $U$ is an open subset of $G_P^{(3)}$, $\mu_P(\varphi^{-1}(U))=\mu_P(U)$.
\end{lemma}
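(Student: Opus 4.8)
The plan is to mimic the proofs of \Cref{invdensity1} and \Cref{invdensity2}: reduce to basic open sets and then count residues of the five coefficients $(a_1,a_2,a_3,a_4,a_6)$ modulo a common power of $\pi_P$, using that the coefficient change of variables defining $\varphi$ becomes ``triangular'' once the variables are listed in a suitable order. Since the sets of the form
\[
V=\{\,y^2+a_1'xy+a_3'y=x^3+a_4'x+a_6' : a_i'\in r_i+\pi_P^{n_i}R_P \text{ for } i\in\{1,3,4,6\}\,\}
\]
(with $r_i\in R_P$ and $n_i\ge 0$) form a basis of the topology of $G_P^{(3)}\cong R_P^4$, every open subset of $G_P^{(3)}$ is a countable disjoint union of such $V$, so by countable additivity of $\mu_P$ it suffices to prove $\mu_P(\varphi^{-1}(V))=\mu_P(V)=Q_P^{-(n_1+n_3+n_4+n_6)}$.

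Next I would unwind the condition $\varphi(E)\in V$ for $E:y^2+a_1xy+a_3y=x^3+a_2x^2+a_4x+a_6$ in $G_P$: it is equivalent to the four conditions $a_1\in r_1+\pi_P^{n_1}R_P$, $a_3-\tfrac{a_1a_2}{3}\in r_3+\pi_P^{n_3}R_P$, $a_4-\tfrac{a_2^2}{3}\in r_4+\pi_P^{n_4}R_P$, and $\tfrac{2a_2^3}{27}-\tfrac{a_2a_4}{3}+a_6\in r_6+\pi_P^{n_6}R_P$; here $3$ and $27$ are units of $R_P$ since $p=2$, so these expressions make sense and $\varphi$ indeed lands in $G_P^{(3)}$. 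Reading these in the order $a_1,a_2,a_3,a_4,a_6$, the first condition constrains $a_1$ to a coset of $\pi_P^{n_1}R_P$; then $a_2$ is free; given $a_1,a_2$, the second constrains $a_3$ to a coset of $\pi_P^{n_3}R_P$; given $a_2$, the third constrains $a_4$ to a coset of $\pi_P^{n_4}R_P$; and given $a_2,a_4$, the fourth constrains $a_6$ to a coset of $\pi_P^{n_6}R_P$. Each of these conditions depends only on the residues of the relevant coefficients modulo $\pi_P^{M}$ for $M=\max(n_1,n_3,n_4,n_6)$.

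Finally I would count. Choose $a_1,a_2,a_3,a_4,a_6$ modulo $\pi_P^M$ in that order. A coset of $\pi_P^{n_i}R_P$ contains exactly $Q_P^{M-n_i}$ classes modulo $\pi_P^M$, independently of which coset it is, so there are $Q_P^{M-n_1}$ admissible residues for $a_1$, then $Q_P^{M}$ for $a_2$, $Q_P^{M-n_3}$ for $a_3$, $Q_P^{M-n_4}$ for $a_4$, and $Q_P^{M-n_6}$ for $a_6$. The product is $Q_P^{5M-(n_1+n_3+n_4+n_6)}$, and each such residue tuple cuts out a ball of $\mu_P$-measure $Q_P^{-5M}$ in $G_P\cong R_P^5$; hence $\mu_P(\varphi^{-1}(V))=Q_P^{-(n_1+n_3+n_4+n_6)}=\mu_P(V)$, as desired.

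I do not expect a genuine obstacle here; the only care needed is the bookkeeping of which equation constrains which coefficient — the order $a_1,a_2,a_3,a_4,a_6$ works because $a_2$ enters as a free parameter while, proceeding down the list, each successive condition pins the next coefficient ($a_3$, then $a_4$, then $a_6$) into a coset of the appropriate $\pi_P$-power given the already-chosen coefficients — together with the (harmless) remark that $2,3,27$ introduce no denominators in residue characteristic $2$. Alternatively one could write $\varphi$ as the substitution $x\mapsto x-a_2/3$ followed by dropping the (now vanishing) $x^2$-coefficient and invoke translation-invariance of the Haar measure directly, but the explicit count above parallels the earlier lemmas most closely.
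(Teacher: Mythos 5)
Your proof is correct and matches the paper's approach exactly: the paper's own proof of \Cref{invdensity3} simply remarks that it follows by the same method as \Cref{invdensity1}, and your write-up spells out precisely that argument — reduction to basic boxes, the triangular dependence of the new coefficients on $(a_1,a_2,a_3,a_4,a_6)$, and the residue count modulo $\pi_P^M$. The one point worth being explicit about, which you correctly flagged, is that $3$ and $27$ are units in $R_P$ because $p=2$, so the formulas defining $\varphi$ do land in $R_P$ and the ``$a_2$ is a free parameter, each later coefficient is pinned to a coset'' structure goes through verbatim.
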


\begin{proof}
This can be proved using a method similar to the proof of \Cref{invdensity1}.
\end{proof}

\subsection{Densities after multiple iterations of Tate's algorithm}

Let $k$ be a nonnegative integer. Suppose $S_k$ is the set of elliptic curves $E\in G_P^{(3)}$ such that $N_P(E)\geq k$.

For an elliptic curve $E\in G_P^{(3)}$ with equation $E:y^2+a_1xy+a_3y=x^3+a_4x+a_6$, let $A_k(E)$ be the set of $(l,m,n)\in R_P^3$ such that
\begin{align*}
& \left(y+\frac{l}{\pi_P^k}x+\frac{m}{\pi_P^{3k}}\right)^2+\frac{a_1}{\pi_P^k}\left(x+\frac{n}{\pi_P^{2k}}\right)\left(y+\frac{l}{\pi_P^k}x+\frac{m}{\pi_P^{3k}}\right) \\
& + \frac{a_3}{\pi_P^{3k}}\left(y+\frac{l}{\pi_P^k}x+\frac{m}{\pi_P^{3k}}\right) 
- \left(x+\frac{n}{\pi_P^{2k}}\right)^3-\frac{a_4}{\pi_P^{4k}}\left(x+\frac{n}{\pi_P^{2k}}\right)-\frac{a_6}{\pi_P^{6k}} \in R_P[x,y].
\end{align*}

\begin{theorem}
\label{prop:unique2}
Let $E$ be an elliptic curve in $G_P^{(3)}$. Then, $E\in S_k$ if and only if a unique pair $(l,m)\in L_{P,k}\times L_{P,3k}$ exists such that $(l,m,l^2+a_1l)\in A_k(E)$.
\end{theorem}

\begin{proof}
Suppose a unique pair $(l,m)$ satisfying the conditions exists. Because $A_k(E)$ is nonempty, $E\in S_k$ from \Cref{prop:multiterations}.

Assume $E\in S_k$. Then, using \Cref{prop:multiterations}, $A_k(E)$ is nonempty. Let the equation of $E$ be $E:y^2+a_1xy+a_3y=x^3+a_4x+a_6$ for $a_1,a_3,a_4,a_6\in R_P$.

From replacing $y$ with $y+l'x$ for $l'\in R_P$, if $(l,m,n)\in A_k(E)$, $(l+l'\pi_P^{k}, m,n)\in A_k(E)$. Therefore, there exist $l\in L_{P,k}$ and $m,n\in R_P$ such that $(l,m,n)\in A_k(E)$. Moreover, if $(l,m,n)\in A_k(E)$, $l^2+a_1l+n\equiv 0\pmod{\pi_P^{2k}}$. With this, from replacing $x$ with $x+\frac{l^2+a_1l+n}{\pi_P^{2k}}$, if $(l,m,n)\in A_k(E)$, $(l,m+l(l^2+a_1l+n),l^2+a_1l)\in A_k(E)$. Therefore, there exist $l\in L_{P,k}$ and $m\in R_P$ such that $(l,m,l^2+a_1l)\in A_k(E)$. Next, from replacing $y$ with $y+m'$ for $m'\in R_P$, there exists $l\in L_{P,k}$ and $m\in L_{P,3k}$ such that $(l,m,l^2+a_1l)\in A_k(E)$.

Next, we prove that $(l,m)$ is unique. Assume that $(l_1,m_1), (l_2,m_2)\in L_{P,k}\times L_{P,3k}$ and $(l_1,m_1,l_1^2+a_1l_1), (l_2,m_2,l_2^2+a_1l_2) \in A_k(E)$. We prove that $(l_1,m_1)=(l_2,m_2)$.

Let $F$ be the curve 
\[
F:y^2+\frac{a_1}{\pi_P^k}xy+\frac{a_3}{\pi_P^{3k}}y=x^3+\frac{a_4}{\pi_P^k}x+\frac{a_6}{\pi_P^{6k}}.
\]
For $1\leq i\leq 2$, let $F_i$ be $F$ with $x$ replaced by $x+\frac{l_i^2+a_1l_i}{\pi_P^{2k}}$ and $y$ replaced by $y+\frac{l_i}{\pi_P^{k}}x+\frac{m_i}{\pi_P^{3k}}$. Note that $F_i\in G_P^{(3)}$ because $(l_i, m_i, l_i^2+a_il_i)\in A_k(E)$ for $1\leq i\leq 2$. From this, $a_1\equiv 0\pmod{\pi_P^k}$. 

Suppose $a_1\not=0$. We have that $F_1$ and $F_2$ are isomorphic and $v_P(\Delta(F_1))=v_P(\Delta(F_2))$. Then, using \Cref{translation}, let $\tau$ be a translation from the equation of $F_1$ to the equation of $F_2$ that replaces $x$ with $u^2x+n'$ and $y$ with $u^3y+l'u^2x+m'$, where $u,l',m',n'\in R_P$ and $v_P(u)=0$. 

The coefficient of $xy$ after $\tau$ is applied to the equation of $F_1$ is $\frac{a_1}{u\pi_P^k}$. However, the coefficient of $xy$ in $F_2$ is $\frac{a_1}{\pi_P^k}$. Therefore, $u=1$ and $a_1\equiv 0\pmod{\pi_P^k}$. 

Next, the coefficient of $y$ after $\tau$ is applied to the equation of $F_1$ \[
\frac{a_1l_1^2+a_1^2l_1+a_3+\pi_P^{2k}a_1n'}{\pi_P^{3k}}.
\]
However, the coefficient of $y$ in $F_2$ is
\[
\frac{a_1l_2^2+a_1^2l_2+a_3}{\pi_P^{3k}}.
\]
Therefore, 
\[
l_1^2+a_1l_1+\pi_P^{2k}n'=l_2^2+a_1l_2.
\]
Because $a_1\equiv 0\pmod{\pi_P^k}$, we have that $l_1\equiv l_2\pmod{\pi_P^k}$. Therefore, $l_1=l_2$. From this, $n'=0$.

The coefficient of $x^2$ after $\tau$ is applied to the equation of $F_1$ is
\[
n'+(l')^2+\frac{a_1l'}{\pi_P^k}.
\]
This equals the coefficient of $x^2$ in $F_2$, which is $0$. Because $n'=0$, we have that $l'=0$ or $l'=\frac{a_1}{\pi_P^k}$.

From setting the coefficient of $x$ after $\tau$ is applied to the equation of $F_1$ equal to the coefficient of $x$ in $F_2$,
\[
\frac{a_1}{\pi_P^k}\cdot\left(\frac{m_1}{\pi_P^{3k}}+m'\right)+\frac{a_1(l_1^2+a_1l_1)+a_3}{\pi_P^{3k}}\cdot l'=\frac{a_1}{\pi_P^k}\cdot\frac{m_2}{\pi_P^{3k}}.
\]
Suppose $l'=0$. Then $\frac{m_1}{\pi_P^{3k}}+m'=\frac{m_2}{\pi_P^{3k}}$. It follows that $m_1\equiv m_2\pmod{\pi_P^{3k}}$ and $m_1=m_2$. Suppose $l'=\frac{a_1}{\pi_P^k}$. We have that
\[
\frac{m_1}{\pi_P^{3k}}+m'+\frac{a_1(l_1^2+a_1l_1)+a_3}{\pi_P^{3k}}=\frac{m_2}{\pi_P^{3k}}.
\]
However, using that the coefficient of $y$ in $F_2$ is an element of $R_P$,
\[
a_1(l_1^2+a_1l_1)+a_3\equiv a_1(l_2^2+a_1l_2)+a_3\equiv 0\pmod{\pi_P^{3k}}.
\]
Therefore, $m_1\equiv m_2\pmod{\pi_P^{3k}}$ and $m_1=m_2$.

Assume $a_1=0$. From the coefficient of $y$ in $F_2$, we have that $a_3\equiv 0\pmod{\pi_P^{3k}}$. Also, from the coefficients of $x$ in $F_1$ and $F_2$,
$l_1^4+a_3l_1\equiv l_2^4+a_3l_2\pmod{\pi_P^{4k}}$.
This gives that $l_1=l_2$. Afterwards, from the constant terms of $F_1$ and $F_2$, $m_1^2+a_3m_1\equiv m_2^2+a_3m_2\pmod{\pi_P^{6k}}$. From this, we obtain that $m_1=m_2$.
\end{proof}

Using \Cref{prop:unique2}, for $E\in S_k$, let the unique pair $(l,m)\in L_{P,k}\times L_{P,3k}$ such that $(l,m,l^2+a_1l)\in A_k(E)$ be $(l(E), m(E))$. Define $\phi_k: S_k\rightarrow S_0$ to be the function such that if $E\in S_k$ has equation $E:y^2+a_1xy+a_3y=x^3+a_4x+a_6$, $\phi_k(E)$ has equation
\begin{align*}
& \phi_k(E): y^2+\frac{a_1}{\pi_P^k}xy+\frac{a_1(l(E)^2+a_1l(E))+a_3}{\pi_P^{3k}} = x^3+\\
& \frac{l(E)^2(l(E)^2+a_1l(E))+a_1m(E)+a_3l(E)+a_4}{\pi_P^{4k}}x + \\
& \frac{(a_1m(E)+a_4+a_1^2l(E)^2+l(E)^4)(l(E)^2+a_1l(E))+a_3m(E)+a_6+m(E)^2}{\pi_P^{6k}}.
\end{align*}
The equation for $\phi_k(E)$ is equivalent to
\begin{align*}
& \left(y+\frac{l(E)}{\pi_P^k}x+\frac{m(E)}{\pi_P^{3k}}\right)^2+\frac{a_1}{\pi_P^k}\left(x+\frac{l(E)^2+a_1l(E)}{\pi_P^{2k}}\right)\left(y+\frac{l(E)}{\pi_P^k}x+\frac{m(E)}{\pi_P^{3k}}\right) \\ & +\frac{a_3}{\pi_P^{3k}}\left(y+\frac{l(E)}{\pi_P^k}x+\frac{m(E)}{\pi_P^{3k}}\right) = \\
& \left(x+\frac{l(E)^2+a_1l(E)}{\pi_P^{2k}}\right)^3+\frac{a_4}{\pi_P^{4k}}\left(x+\frac{l(E)^2+a_1l(E)}{\pi_P^{2k}}\right)+\frac{a_6}{\pi_P^{6k}}.
\end{align*}
Note that $S_0\subset G_P^{(3)}$, and from \Cref{densityec} and \Cref{invdensity3}, $\mu_P(S_0)=1$.  For $l\in L_{P, k}$ and $m\in L_{P, 3k}$, let $S_{k,l,m}$ be the set of $E\in S_k$ such that $l(E)=l$ and $m(E)=m$. Assume that $\phi_{k,l,m}$ is $\phi_k$ restricted to $S_{k,l,m}$.

\begin{lemma}
\label{invdensitymult3a}
Suppose $l\in L_{P,k}$ and $m\in L_{P, 3k}$. If $U$ is an open subset of $G_P^{(3)}$, $\mu_P(\phi_{k,l,m}^{-1}(U))\\=\frac{1}{Q_P^{14k}}\mu_P(U)$.
\end{lemma}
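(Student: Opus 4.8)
The plan is to follow the template of \Cref{invdensitymult2a} from the $p=3$ case. First I would reduce to the case where $U = V$ is a basic open set: the set of curves $y^2 + a_1'xy + a_3'y = x^3 + a_4'x + a_6'$ with $a_i' \in r_i + \pi_P^{n_i}R_P$ for $i \in \{1,3,4,6\}$, where the $r_i$ lie in $R_P$ and the $n_i$ are nonnegative integers; every open subset of $G_P^{(3)} \cong R_P^4$ is a countable disjoint union of such sets, so by countable additivity of $\mu_P$ it suffices to treat $V$. Since $\mu_P(S_0) = 1$, we have $\mu_P(V) = Q_P^{-(n_1+n_3+n_4+n_6)}$, and the goal reduces to proving $\mu_P(\phi_{k,l,m}^{-1}(V)) = Q_P^{-(n_1+n_3+n_4+n_6+14k)}$.

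The core of the argument is the claim that for an elliptic curve $E: y^2 + a_1xy + a_3y = x^3 + a_4x + a_6 \in G_P^{(3)}$, one has $E \in S_{k,l,m}$ with $\phi_{k,l,m}(E) \in V$ if and only if the four coefficients appearing in the defining formula for $\phi_k(E)$ — namely $\frac{a_1}{\pi_P^k}$, $\frac{a_1(l^2+a_1l)+a_3}{\pi_P^{3k}}$, $\frac{l^2(l^2+a_1l)+a_1m+a_3l+a_4}{\pi_P^{4k}}$, and $\frac{(a_1m+a_4+a_1^2l^2+l^4)(l^2+a_1l)+a_3m+a_6+m^2}{\pi_P^{6k}}$ — lie in $r_1 + \pi_P^{n_1}R_P$, $r_3 + \pi_P^{n_3}R_P$, $r_4 + \pi_P^{n_4}R_P$, and $r_6 + \pi_P^{n_6}R_P$ respectively (in particular these conditions force $v_P(a_1)\geq k$ and the analogous divisibilities). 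The forward implication is immediate from the formula defining $\phi_k$. For the converse, the four conditions say precisely that the displayed equation for $\phi_k(E)$ has coefficients in $R_P$, that is, that $(l,m,l^2+a_1l) \in A_k(E)$; hence $E \in S_k$ by \Cref{prop:multiterations}, and since $(l,m) \in L_{P,k} \times L_{P,3k}$, \Cref{prop:unique2} identifies $(l,m)$ with $(l(E),m(E))$, so $E \in S_{k,l,m}$ and $\phi_{k,l,m}(E) \in V$.

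Next I would carry out the residue count. Put $M = \max(n_1+k, n_3+3k, n_4+4k, n_6+6k)$ and count the tuples $(a_1,a_3,a_4,a_6)$ modulo $\pi_P^M$ satisfying the four conditions. They form a triangular system: the first pins $a_1$ to a single residue class modulo $\pi_P^{n_1+k}$; given $a_1$, the second is affine in $a_3$ with leading coefficient $1$, pinning $a_3$ modulo $\pi_P^{n_3+3k}$; given $a_1,a_3$, the third is affine in $a_4$ with leading coefficient $1$, pinning $a_4$ modulo $\pi_P^{n_4+4k}$; given $a_1,a_3,a_4$, the fourth is affine in $a_6$ with leading coefficient $1$, pinning $a_6$ modulo $\pi_P^{n_6+6k}$. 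Hence there are $Q_P^{4M - (n_1+n_3+n_4+n_6+14k)}$ admissible tuples modulo $\pi_P^M$, each the reduction of a subset of $R_P^4$ of $\mu_P$-measure $Q_P^{-4M}$; the non-elliptic curves among these form a $\mu_P$-null set since $\mu_P(S_0)=1$, so $\mu_P(\phi_{k,l,m}^{-1}(V)) = Q_P^{-(n_1+n_3+n_4+n_6+14k)} = \frac{1}{Q_P^{14k}}\mu_P(V)$. Summing over the pieces of a general open $U$ then finishes the proof.

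The step requiring the most care is the converse direction of the equivalence in the second paragraph, namely checking that the four membership conditions really do amount to $(l,m,l^2+a_1l) \in A_k(E)$ — this rests on the characteristic-$2$ identity that the $x^2$-coefficient of the translated curve cancels exactly because $n = l^2 + a_1 l$, which is what makes $\phi_k(E)$ land in $G_P^{(3)}$ and lets \Cref{prop:unique2} be applied. Once this correspondence is established, the triangular structure of the four conditions is read directly off the explicit formula for $\phi_k(E)$, and the remaining count is routine bookkeeping entirely parallel to \Cref{invdensitymult2a}.
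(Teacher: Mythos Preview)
Your proposal is correct and follows exactly the approach the paper intends: the paper's own proof is just the single sentence ``This can be proved with a method that is similar to the proof of \Cref{invdensitymult2a},'' and you have faithfully carried out that template, supplying the triangular residue count in the four variables $a_1,a_3,a_4,a_6$ and the biconditional linking $\phi_{k,l,m}^{-1}(V)$ to the explicit congruence conditions via \Cref{prop:unique2}. Nothing further is needed.
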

\begin{proof}
This can be proved with a method that is similar to the proof of \Cref{invdensitymult2a}.
\end{proof}

\begin{lemma} 
\label{invdensitymult3b}
If $U$ is an open subset of $G_P^{(3)}$, $\mu_P(\phi_k^{-1}(U))=\frac{1}{Q_P^{10k}}\mu_P(U)$.
\end{lemma}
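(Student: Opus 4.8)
The plan is to mimic the proof of \Cref{invdensitymult2b} from the $p=3$ case, replacing the single translation parameter $n$ with the pair $(l,m)$. The key structural input is \Cref{prop:unique2}: for every $E\in S_k$ there is a \emph{unique} pair $(l,m)\in L_{P,k}\times L_{P,3k}$ with $(l,m,l^2+a_1l)\in A_k(E)$, and this pair is precisely $(l(E),m(E))$ used to define $\phi_k$. Consequently the sets $S_{k,l,m}$, indexed by $(l,m)\in L_{P,k}\times L_{P,3k}$, partition $S_k$, and hence
\[
\phi_k^{-1}(U)=\bigsqcup_{l\in L_{P,k}}\ \bigsqcup_{m\in L_{P,3k}}\phi_{k,l,m}^{-1}(U),
\]
a \emph{finite} disjoint union since $L_{P,k}$ and $L_{P,3k}$ are finite (of sizes $Q_P^{k}$ and $Q_P^{3k}$ respectively).

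Next I would apply \Cref{invdensitymult3a} to each piece: for each admissible pair $(l,m)$ we have $\mu_P(\phi_{k,l,m}^{-1}(U))=\frac{1}{Q_P^{14k}}\mu_P(U)$. Summing over all $Q_P^{k}\cdot Q_P^{3k}=Q_P^{4k}$ pairs and using finite additivity of $\mu_P$ gives
\[
\mu_P(\phi_k^{-1}(U))=\sum_{l\in L_{P,k}}\sum_{m\in L_{P,3k}}\mu_P(\phi_{k,l,m}^{-1}(U))=Q_P^{4k}\cdot\frac{1}{Q_P^{14k}}\mu_P(U)=\frac{1}{Q_P^{10k}}\mu_P(U),
\]
which is the claim.

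There is essentially no analytic obstacle here: because the index set is finite, no issue of countable additivity or measurability of a countable union arises, and the measurability of each $\phi_{k,l,m}^{-1}(U)$ is already implicit in \Cref{invdensitymult3a}. The only point requiring care — and the step I would state explicitly — is justifying the disjoint decomposition, i.e. that $\{S_{k,l,m}\}$ is genuinely a partition of $S_k$; this is exactly where \Cref{prop:unique2} (existence and uniqueness of the pair $(l,m)$) is used, and no other ingredient is needed.
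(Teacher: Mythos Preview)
Your proposal is correct and follows essentially the same approach as the paper: decompose $\phi_k^{-1}(U)$ as the disjoint union of the $\phi_{k,l,m}^{-1}(U)$ over $(l,m)\in L_{P,k}\times L_{P,3k}$, apply \Cref{invdensitymult3a} to each piece, and sum. If anything, you are slightly more explicit than the paper in invoking \Cref{prop:unique2} to justify that the $S_{k,l,m}$ partition $S_k$.
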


\begin{proof}
Let $U$ be an open subset of $G_P^{(3)}$. We have that $\phi_k^{-1}(U)=\bigsqcup_{l\in L_{P,k}, m\in L_{P,3k}}\phi_{k,l,m}^{-1}(U)$. Using \Cref{invdensitymult3a},
\begin{align*}
\mu_P(\phi_k^{-1}(U))& =\sum_{l\in L_{P,k}}\sum_{m\in L_{P,3k}}\mu_P(\phi_{k,l,m}^{-1}(U))= \sum_{l\in L_{P,k}}\sum_{m\in L_{P,3k}} \frac{1}{Q_P^{14k}}\mu_P(U)= \frac{1}{Q_P^{10k}}\mu_P(U),
\end{align*}
completing the proof.
\end{proof}

\subsection{Density calculations for \texorpdfstring{$v_P(a_1)=0$}{}}

Suppose that $v_P(a_1)=0$. This case has density $\frac{Q_P-1}{Q_P}$. The discriminant is
\[
a_1^4(a_1^2a_6+a_1a_3a_4+a_4^2)+a_3^4+a_1^3a_3^3.
\]

Note that by considering $a_6$ modulo $\pi_P$, the discriminant is not divisible by $\pi_P$ with density $\frac{(Q_P-1)^2}{Q_P^2}$. For this case, the algorithm ends in step 1. Then, we add $\frac{(Q_P-1)^2}{Q_P^2}$ to $\delta_K(I_0,1,0;P)$.

Assume the discriminant is divisible by $\pi_P$. Let $(\alpha_1, \alpha_2)$ be the singular point modulo $\pi_P$; it can be proven that $\alpha_1, \alpha_2 \in R_P$. Also, $\alpha_1\equiv-\frac{a_3}{a_1}\pmod{\pi_P}$. In step 2, replace $x$ by $x+n$ and $y$ by $y+m$ with $n=\alpha_1$ and $m=\alpha_2$. Afterwards, the coefficient of $xy$ is $a_1$, which is not divisible by $\pi_P$. The algorithm then ends in step 2. 

We see that the discriminant is linear in $a_6$. Therefore, we have that $v_P(a_1)=0$ and $v_P(\Delta(E))=N$ with density $\frac{(Q_P-1)^2}{Q_P^{N+2}}$ for $N\geq 0$. Note that the polynomial considered in step 2 is $T^2+a_1T+\alpha_1$. Suppose $a_1\equiv r_1\pmod{\pi_P}$ and $a_3\equiv r_3\pmod{\pi_P}$ for $r_1, r_3\in L_{P,1}$ such that $r_1\not=0$. Given $r_1$, $T^2+a_1T+\alpha_1$ is irreducible over $R_P/\pi_PR_P$ for $\frac{Q_P}{2}$ values of $r_3$. Afterwards, using step 2 of Tate's algorithm, we get that in this case, $\delta_K(I_1,1,0;P)=\frac{(Q_P-1)^2}{Q_P^3}$, $\delta_K(I_2,2,0;P)=\frac{(Q_P-1)^2}{Q_P^4}$, and 
\[
\delta_K(I_N,N,0;P)=\delta_K\left(I_N,2\left\lfloor\frac{N}{2}\right\rfloor-N+2,0;P\right)=\frac{(Q_P-1)^2}{2Q_P^{N+2}}
\]
for $N\geq 3$. 

\subsection{Density calculations for \texorpdfstring{$v_P(a_1)\geq 1$}{}}

In this subsection, we assume that $v_P(a_1)\geq 1$. The density for this is $\frac{1}{Q_P}$ and the discriminant modulo $\pi_P$ is $a_3^4$. 

Suppose $v_P(a_3)=0$. The density for this case is $\frac{Q_P-1}{Q_P^2}$ and the discriminant is not divisible by $\pi_P$. Tate's algorithm then ends in step 1 and we add $\frac{Q_P-1}{Q_P^2}$ to $\delta_K(I_0,1,0;P)$. Following this, we obtain that $\delta_K(I_0,1,0;P)=\frac{Q_P-1}{Q_P}$.

Next, assume $v_P(a_3)\geq 1$. The total density for this case is $\frac{1}{Q_P^2}$. The singular point modulo $\pi_P$ is $(x,y)=(\alpha_1, \alpha_2)$ for $\alpha_1, \alpha_2\in L_{P, 1}$ such that $a_4\equiv\alpha_1^2\pmod{\pi_P}$ and $a_6\equiv\alpha_2^2\pmod{\pi_P}$. We replace $x$ with $x+n$ and $y$ with $y+m$, where $n=\alpha_1$ and $m=\alpha_2$. The curve is
\[
(y+m)^2+a_1(x+n)(y+m)+a_3(y+m)=(x+n)^3+a_4(x+n)+a_6.
\]

If $\pi_P^2$ does not divide $mna_1+ma_3+na_4+a_6+m^2+n^3$, the algorithm ends in step 3. By adding multiples of $\pi_P$ to $a_6$, this occurs with density $\frac{Q_P-1}{Q_P^3}$. We have that $\delta_K(II,1,0;P)=\frac{Q_P-1}{Q_P^3}$.

Assume $\pi_P^2$ divides $mna_1+ma_3+na_4+a_6+m^2+n^3$. The total density for this case is $\frac{1}{Q_P^3}$. We have that
\[
b_8=n(n a_1+a_3)^2+(m a_1+a_4+n^2)^2.
\]
If $b_8$ is not divisible by $\pi_P^3$, the algorithm ends in step 4. By adding multiples of $\pi_P$ to $a_4$, we have that $\delta_K(III,2,0;P)=\frac{Q_P-1}{Q_P^4}$.

Assume $b_8$ is divisible by $\pi_P^3$. The total density for this case is $\frac{1}{Q_P^4}$. If $v_P(n a_1+a_3)=1$, the algorithm ends in step 5. Assume $a_4\equiv 0\pmod{\pi_P}$. Then, replace $a_3$ with $a_3+d\pi_P$ and $a_4$ with $a_4+\beta d\pi_P$ for $\beta, d\in L_{P,1}$ such that $\beta^2\equiv\alpha_1\pmod{\pi_P}$. This will not affect the previous steps of the algorithm; particularly, this will not change $b_8$ modulo $\pi_P^3$. However, $na_1+a_3$ will be increased by $d\pi_P$. Therefore, we have that $v_P(n a_1+a_3)=1$ with density $\frac{Q_P-1}{Q_P^5}$. From this, $\delta_K(IV,1,0;P)=\delta_K(IV,3,0;P)=\frac{Q_P-1}{2Q_P^5}$.

Assume $v_P(n a_1+a_3)\geq 2$. The total density for this case is $\frac{1}{Q_P^5}$. Let $\alpha_3$ be the element of $L_{P, 1}$ such that $n\equiv\alpha_3^2\pmod{\pi_P}$. Also, let $\alpha_4$ be the element of $L_{P, 1}$ such that
$mna_1+ma_3+na_4+a_6+m^2+n^3\equiv \alpha_4^2\pi_P^2\pmod{\pi_P^3}$. After the transformation in step 6, the equation of the curve is
\begin{align*}
& (y+l x+m)^2+a_1(x+n)(y+l x+m)+a_3(y+l x+m)  =(x+n)^3+a_4(x+n)+a_6,
\end{align*}
where $n=\alpha_1$, $l=\alpha_3$, and $m=\alpha_2+\alpha_4\pi_P$. Suppose that in step 6, the polynomial $P(T)\in (R_P/\pi_PR_P)[T]$ is $P(T)=T^3+w_2T^2+w_1T+w_0$.

Suppose $a_4\equiv 0\pmod{\pi_P}$. Because $0\in L_{P, 1}$, we have that $n=l=0$. This means that $w_2=0$. Then, we can replace $a_4$ with $a_4+d_1\pi_P^2$ for $d_1\in L_{P,1}$ and the previous steps of the algorithm will not be changed. With this, the choices for $w_1$ modulo $\pi_P$ are the elements of $L_{P,1}$. Following this, from replacing $a_6$ with $a_6+d_2\pi_P^3$ for $d_2\in L_{P,1}$, the choices for $w_0$ modulo $\pi_P$ are the elements of $L_{P,1}$. We have that the number of $P(T)$ with a double root and no roots in $\overline{R_P/\pi_PR_P}$ are $Q_P-1$ and $1$, respectively. Moreover, we have that the number of $P(T)$ with $3$ distinct roots in $\overline{R_P/\pi_PR_P}$ and $0$ roots, $1$ root, and $3$ roots in $R_P/\pi_PR_P$ are $\frac{Q_P^2-1}{3}$, $\frac{Q_P^2-Q_P}{2}$, and $\frac{Q_P^2-3Q_P+2}{6}$, respectively.

Suppose $a_4\not\equiv 0\pmod{\pi_P}$. Consider the translation that replaces $a_1$ with $a_1+d_1\pi_P$, $a_3$ with $a_3+\alpha_1d_1\pi_P$, $a_4$ with $a_4+(\alpha_2+\alpha_4\pi_P)d_1\pi_P$, and $a_6$ with $a_6+\alpha_1(\alpha_2+\alpha_4\pi_P) d_1\pi_P$ for $d_1\in L_{P,1}$. After this, the steps of the algorithm before step 6 do not change. In step 6, $w_0$ and $w_1$ do not change. However, $w_2$ increases by $\alpha_3d_1$. Because $\alpha_3\not=0$, the choices for $w_2$ are the elements of $L_{P,1}$. Next, replace $a_6$ with $a_6+d_2\pi_P^3$ for $d_2\in L_{P,1}$. With this, the choices for $w_0$ are also the elements of $L_{P,1}$. The number of $P(T)$ with a double root and no roots in $\overline{R_P/\pi_PR_P}$ are the same as above. Furthermore, the number of $P(T)$ with $3$ distinct roots in $\overline{R_P/\pi_PR_P}$ and $0$ roots, $1$ root, and $3$ roots in $R_P/\pi_PR_P$ are the same as above. 

Suppose $P(T)$ has distinct roots. For this case, the total density is $\frac{Q_P-1}{Q_P^6}$ and Tate's algorithm ends in step 6. We see that $\delta_K(I_0^*,1,0;P)=\frac{Q_P^2-1}{3Q_P^7}$, $\delta_K(I_0^*,2,0;P)=\frac{Q_P-1}{2Q_P^6}$, and $\delta_K(I_0^*,4,0;P)=\frac{Q_P^2-3Q_P+2}{6Q_P^7}$.

Assume that $P(T)$ has a double root and a simple root. For this case, the total density is $\frac{Q_P-1}{Q_P^7}$ and Tate's algorithm ends in step 7. In \Cref{subsec:subden3}, we compute that $\delta_K(I_N^*, 2,0 ;P)=\delta_K(I_N^*,4,0;P)=\frac{(Q_P-1)^2}{2Q_P^{N+7}}$ for all positive integers $N$.

Next, suppose $P(T)$ has a triple root. For this case, the density is $\frac{1}{Q_P^7}$ and the root of $P(T)$ is $\sqrt{w_1}$ modulo $\pi_P$. If $a_4\equiv 0\pmod{\pi_P}$, the triple root is $0$ modulo $\pi_P$. Let $\alpha_5$ be an element of $L_{P,1}$ such that
\[(m+ln)a_1 + la_3+a_4+n^2\equiv\alpha_5^2\pi_P^2\pmod{\pi_P^3}.
\]
Then, the translation in step 8 sets $n$ to be $n=\alpha_1+\alpha_5\pi_P$.

Suppose $a_4\equiv 0\pmod{\pi_P}$. Replace $a_3$ with $a_3+d\pi_P^2$ and $a_6$ with $a_6+(\alpha_2+\alpha_4\pi_P) d\pi_P^2$ for some $d\in L_{P,1}$. Then, note that the previous parts of the algorithm, including $P(T)$, are unchanged. However, the coefficient of $y$ increases by $d\pi_P^2$. We have that for one value of $d$, the coefficient of $y$ is divisible by $\pi_P^3$. Next, suppose $a_4\not\equiv 0\pmod{\pi_P}$. Replace $a_1$ with $a_1+d\pi_P^2$ and $a_4$ with $a_4+(\alpha_2+\alpha_4\pi_P)d\pi_P^2$ for some $d\in L_{P,1}$. The previous parts of the algorithm, including $P(T)$, are unchanged. However, the coefficient of $y$ increases by $(\alpha_1+\alpha_5\pi_P)d\pi_P^2$. Similarly, we have that for one value of $d$, the coefficient of $y$ is divisible by $\pi_P^3$. From this, we get that the coefficient of $y$ is not divisible by $\pi_P^3$ and the algorithm ends in step 8 with density $\frac{Q_P-1}{Q_P^8}$. Thus, $\delta_K(IV^*,1,0;P)=\delta_K(IV^*,3,0;P)=\frac{Q_P-1}{2Q_P^8}$.

Assume the coefficient of $y$ is divisible by $\pi_P^3$. The total density of this case is $\frac{1}{Q_P^8}$. Let $\alpha_6$ be the element of $L_{P,1}$ such that
\[
mna_1+ma_3+na_4+a_6+m^2+n^3\equiv\alpha_6^2\pi_P^4\pmod{\pi_P^5}.
\]
Then, $m$ is set to $m=\alpha_2+\alpha_4\pi_P+\alpha_6\pi_P^2$ in step 9. If $\pi_P^4$ does not divide the $x$ coefficient of this curve, the algorithm ends in step 9. Consider the translation of replacing $a_4$ with $a_4+d\pi_P^3$ and $a_6$ with $a_6+(\alpha_1+\alpha_5\pi_P)d\pi_P^3$ for $d\in L_{P,1}$. The previous steps of the algorithm do not change but the coefficient of $x$ is increased by $d\pi_P^3$. Therefore, $\pi_P^4$ does not divide the $x$ coefficient with density $\frac{Q_P-1}{Q_P^9}$. We have that $\delta_K(III^*,2,0;P)=\frac{Q_P-1}{Q_P^9}$

Assume $\pi_P^4$ divides the coefficient of $x$ of the curve. The total density for this case is $\frac{1}{Q_P^9}$. If $\pi_P^6$ does not divide 
$mna_1+ma_3+na_4+a_6+m^2+n^3$, Tate's algorithm ends in step 10. This occurs with density $\frac{Q_P-1}{Q_P^{10}}$ from adding multiples of $\pi_P^6$ to $a_6$. We then have that $\delta_K(II^*,1,0;P)=\frac{Q_P-1}{Q_P^{10}}$.

Similarly, the non-minimal curves have density $\frac{1}{Q_P^{10}}$.

\subsection{Subprocedure density calculations}

\label{subsec:subden3}

We calculate the densities of the Kodaira types $\mathfrak{r}=I_N^*$ for $N\geq 1$ and the Tamagawa numbers $c=2,4$. Note that previously, the curve was reduced by removing $a_2$ with a translation on $x$ to obtain $G_P^{(3)}$. However, here the density is calculated in $G_P$ without the reduction. That is, the density is calculated for curves in long Weierstrass form.

Let $X$ be the set of elliptic curves $E\in G_P$ such that $N_P(E)=0$ and Tate's algorithm enters the step 7 subprocedure when it is applied to $E$. For $E\in X$, let $L(E)$ be the number of iterations of the step 7 subprocedure that are completed when Tate's algorithm is applied to $E$. For a nonnegative integer $N$, let $X_N$ be the set of $E\in X$ such that $L(E)\geq N$.

Suppose $N$ is an even nonnegative integer. Assume that $N=0$. In iteration $N=0$, there is a translation. Note that the double root of $P(T)$ is the square root of $w_1$. Because of this, in step 7, we add $\gamma_0\pi_P$ to $n$ and $l\gamma_0\pi_P$ to $m$ for some $\gamma_0\in L_{P,1}$ such that
\[
(m+l n) a_1 + l a_3+a_4+n^2\equiv \gamma_0^2\pi_P^2\pmod{\pi_P^3}
\]
Next, assume that $N\geq 2$. Suppose iteration $N$ of the step 7 subprocedure is reached and the quadratic has a double root. Then,
\[
v_P((m+ln)a_1+l a_3+a_4+n^2)\geq\frac{N+6}{2}.
\]
Also, we add $\gamma_N\pi_P^{\frac{N+2}{2}}$ to $n$ and $l\gamma_N\pi_P^{\frac{N+2}{2}}$  to $m$ for some $\gamma_N\in L_{P,1}$ such that
\[
mna_1+ma_3+na_4+a_6+m^2+n^3\equiv (la_1+a_2+n+l^2)\gamma_N^2\pi_P^{N+2}\pmod{\pi_P^{N+4}}.
\]
Note that $v_P(la_1+a_2+n+l^2)=1$. 

Suppose $N$ is an odd nonnegative integer. Suppose iteration $N$ of the step 7 subprocedure is reached and the quadratic has a double root. Then, $v_P(na_1+a_3)\geq\frac{N+5}{2}$. Also, $\gamma_N\pi_P^{\frac{N+3}{2}}$ is added to $m$ for some $\gamma_N\in L_{P,1}$ such that
\[
mna_1+ma_3+na_4+a_6+m^2+n^3\equiv \gamma_N^2\pi_P^{N+3}\pmod{\pi_P^{N+4}}
\]

Let $N$ be a nonnegative integer. Let $Y_N$ be the set of curves $y^2+a_1'xy+a_3'y=x^3+a_2'x^2+a_4'x+a_6'$ with $v_P(a_1')\geq 1$, $v_P(a_2')=1$, $v_P(a_3')\geq\lfloor\frac{N+5}{2}\rfloor, v_P(a_4')\geq \lfloor\frac{N+6}{2}\rfloor$, and $v_P(a_6')\geq N+4$.

Suppose $E\in X_N$ and that the translations of Tate's algorithm when it is used on $E$ are $\alpha_1$, $\alpha_2$, $\alpha_3$, $\alpha_4$, $\gamma_0$, $\gamma_1$, $\ldots$, $\gamma_N$. Let $T_N(E)=(\alpha_1,\alpha_2,\alpha_3,\alpha_4,\gamma_0,\gamma_1,\ldots,\gamma_N)$. Note that because the characteristic of $K$ is $p=2$, $T_N(E)$ is well defined. Also, let $\theta_N(E): X_N\rightarrow Y_N$ be $E$ with $x$ replaced by $x+n$ and $y$ replaced by $y+l x+m$, where 
\[
    n = \alpha_1+\sum_{i=0}^{\left\lfloor\frac{N}{2}\right\rfloor}\gamma_{2i}\pi_P^{i+1}, l = \alpha_3, m = \alpha_2+\alpha_4\pi_P+\alpha_3\sum_{i=0}^{\left\lfloor\frac{N}{2}\right\rfloor}\gamma_{2i}\pi_P^{i+1}+\sum_{i=0}^{\left\lfloor\frac{N-1}{2}\right\rfloor}\gamma_{2i+1}\pi_P^{i+2}.
\]

\begin{lemma}
\label{invdensitysub3}
If $U$ is an open subset of $Y_N$, $\mu_P(\theta_N^{-1}(U))=Q_P^{N+5}\mu_P(U)$.
\end{lemma}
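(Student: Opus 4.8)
The plan is to adapt the proof of \Cref{invdensitysub1}: partition $X_N$ according to the tuple of translations that Tate's algorithm records on the curve, and observe that on each piece $\theta_N$ restricts to a fixed change of coordinates that preserves the Haar measure. For a tuple $T=(\alpha_1,\alpha_2,\alpha_3,\alpha_4,\gamma_0,\ldots,\gamma_N)\in L_{P,1}^{N+5}$, let $X_{N,T}=\{E\in X_N:T_N(E)=T\}$ and let $\theta_{N,T}$ be the restriction of $\theta_N$ to $X_{N,T}$. Because the squaring map on $R_P/\pi_PR_P$ is bijective in characteristic $2$, every translation appearing in the step $7$ subprocedure is uniquely determined, so $T_N$ is well defined, $X_N=\bigsqcup_T X_{N,T}$, and $\theta_N^{-1}(U)=\bigsqcup_T\theta_{N,T}^{-1}(U)$; there are exactly $Q_P^{N+5}$ such tuples $T$ (the four quantities $\alpha_1,\alpha_2,\alpha_3,\alpha_4$ together with the $N+1$ quantities $\gamma_0,\ldots,\gamma_N$, each ranging over $L_{P,1}$). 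Hence it suffices to prove $\mu_P(\theta_{N,T}^{-1}(U))=\mu_P(U)$ for each fixed $T$.

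Fix $T$, and let $n,l,m\in R_P$ be the corresponding quantities from the statement (they lie in $R_P$ since $L_{P,1}\subset R_P$). On $X_{N,T}$, $\theta_N$ agrees with the global map $\Theta_T\colon G_P\to G_P$ that replaces $x$ by $x+n$ and $y$ by $y+lx+m$. Writing out the transformation formulas, for fixed $n,l,m$ the induced map $(a_1,a_2,a_3,a_4,a_6)\mapsto(a_1',a_2',a_3',a_4',a_6')$ is affine, and its linear part is lower triangular with $1$'s on the diagonal in the order $a_1,a_2,a_3,a_4,a_6$; thus $\Theta_T$ is a measure-preserving bijection of $R_P^5=G_P$, its inverse being the change of coordinates with parameters $-n,-l,ln-m$, again in $R_P$.

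To prove $\mu_P(\theta_{N,T}^{-1}(U))=\mu_P(U)$ it suffices, as in \Cref{invdensitysub1}, to take $U=V$ a coordinate box $\prod_i(r_i+\pi_P^{n_i}R_P)\subseteq Y_N$ with $v_P(r_1),n_1\geq 1$, $v_P(r_2)=1$, $n_2\geq 2$, $v_P(r_3),n_3\geq\lfloor\frac{N+5}{2}\rfloor$, $v_P(r_4),n_4\geq\lfloor\frac{N+6}{2}\rfloor$, and $v_P(r_6),n_6\geq N+4$ (these constraints being precisely the condition $V\subseteq Y_N$), since every open subset of $Y_N$ is a countable disjoint union of such boxes. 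For an elliptic curve $E\in G_P$ I would then establish the equivalence: $E\in X_{N,T}$ and $\theta_{N,T}(E)\in V$ hold if and only if the coefficients $a_1',\ldots,a_6'$ of $\Theta_T(E)$ lie in the respective cosets $r_i+\pi_P^{n_i}R_P$. The forward direction is immediate. For the converse, these coset conditions put $\Theta_T(E)$ into $Y_N$, and one checks by running Tate's algorithm on $\Theta_T(E)$ — which shares the discriminant valuation, Kodaira type, and iteration count of $E$ — that the valuation pattern of $Y_N$ forces the curve to enter the step $7$ subprocedure and complete at least $N$ of its iterations with translation tuple exactly $T$, so $E\in X_{N,T}$ and $\theta_{N,T}(E)\in V$. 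Granting the equivalence, $\theta_{N,T}^{-1}(V)$ equals $\Theta_T^{-1}(V)$ up to the Haar-null set of non-elliptic curves in $G_P$ (\Cref{densityec}), so $\mu_P(\theta_{N,T}^{-1}(V))=\mu_P(\Theta_T^{-1}(V))=\mu_P(V)$ by the previous paragraph; alternatively, one counts residues of $a_1,\ldots,a_6$ modulo a high power of $\pi_P$, solving the congruences $a_i'\equiv r_i\pmod{\pi_P^{n_i}}$ successively by triangularity, exactly as in \Cref{invdensitysub1}. Summing over the $Q_P^{N+5}$ tuples $T$ gives $\mu_P(\theta_N^{-1}(U))=Q_P^{N+5}\mu_P(U)$.

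The main obstacle is the converse implication in the displayed equivalence: one must verify that the valuation pattern defining $Y_N$ genuinely characterizes, for elliptic curves and up to measure zero, both that Tate's algorithm reaches and survives $N$ iterations of the step $7$ subprocedure and that the recorded translations are exactly those encoded in $T$. This is essentially the description of the subprocedure that precedes the lemma, read in reverse, and it — not the measure bookkeeping — is where any real work lies; the remaining steps are formal once it is in hand.
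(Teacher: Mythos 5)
Your proposal matches the paper's proof: both partition $X_N$ by the tuple $T_N(E)\in L_{P,1}^{N+5}$ (well-defined because squaring is bijective on $R_P/\pi_PR_P$ in characteristic $2$), establish $\mu_P(\theta_{N,T}^{-1}(U))=\mu_P(U)$ on each piece, and multiply by the $Q_P^{N+5}$ choices of tuple; the paper invokes ``a method similar to the proof of \Cref{invdensitysub1}'' (residue counting), while you give the cleaner observation that $\Theta_T$ is a unipotent lower-triangular affine bijection of $G_P=R_P^5$ and hence Haar-measure-preserving, a modest stylistic improvement on the same route. The converse direction of your displayed equivalence is exactly what the paper leaves implicit in that citation, so your flagging of it as the one place real work remains is accurate.
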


\begin{proof}
Let $a=(\alpha_1,\alpha_2,\alpha_3,\alpha_4, \gamma_0, \gamma_1,\ldots, \gamma_N)_{0\leq i\leq N}$ be an element of $L_{P,1}^{N+5}$. Suppose that $X_{N,a}$ is the set of $E\in X_N$ such that $T_N(E)=a$. Suppose that $\theta_{N,a}$ is $\theta_N$ restricted to $X_{N,a}$. Let $U$ be an open subset of $Y_N$. Using a method similar to the proof of \Cref{invdensitysub1}, we have that
\[
\mu_P(\theta_{N,a}^{-1}(U)) = \mu_P(U).
\]
Because there are $Q_P^{N+5}$ choices of $a$, the result follows.
\end{proof}

Suppose $N$ is a positive integer. With \Cref{invdensitysub3}, we can compute the density for curves that enter step 7 in the first iteration and have type $I_N^*$. We have that $\mu_P(Y_{N-1})=\frac{Q_P-1}{Q_P^{2N+10}}$, and the Haar measure in $G_P^{(3)}$ of curves that have type $I_N^*$ is then $\frac{(Q_P-1)^2}{Q_P^{N+7}}$. Particularly, $\delta_K(I_N^*, 2, 0; P)=\delta_K(I_N^*, 4, 0; P)=\frac{(Q_P-1)^2}{2Q_P^{N+7}}$; note that we can add multiples of $\pi_P^{N+3}$ to $a_6$ to deduce that $c=2$ and $c=4$ have the same density.

\section{Local and Global Density Results}
\label{sec:results}

In Sections \ref{sec:locala}, \ref{sec:localb}, and \ref{sec:localc}, we compute the local densities of Koidara types and Tamagawa numbers for $p\geq 5$, $p=3$, and $p=2$, respectively. The methods we use involved first removing some terms from the equations of elliptic curves with translations, and then using translations to compute the local densities. 

Next, we discuss some results about local and global densities, including a proof of \Cref{thm:multiterdensity}. Particularly, we compute the density of completing at most $k\geq 0$ iterations of Tate's algorithm. 

\subsection{Proof of \texorpdfstring{\Cref{thm:multiterdensity}}{}}
\label{subsec:multiterproof}

In the following proof, the functions $\varphi$ and $\phi_k$ are defined in Sections \ref{sec:locala}, \ref{sec:localb}, and \ref{sec:localc} for each choice of the characteristic of $K$. Let $U$ and $V$ be the sets of elliptic curves $E\in G_P$ with Kodaira type $\mathfrak{r}$ and Tamagawa number $n$ such that $N_P(E)=0$ and $N_P(E)=k$, respectively. Note that $\varphi(U)$ and $\varphi(V)$ are the sets of curves $E\in S_0$ with Kodaira type $\mathfrak{r}$ and Tamagawa number $n$ such that $N_P(E)=0$ and $N_P(E)=k$, respectively.

Suppose $E\in G_P$ and $\varphi(E)\in \varphi(U)$. Then, $E$ has Kodaira type $\mathfrak{r}$, Tamagawa number $n$, and $N_P(E)=0$. This means that $E\in U$. From this, $\varphi^{-1}(\varphi(U))\subset U$. Moreover, $U\subset \varphi^{-1}(\varphi(U))$. It follows that $\varphi^{-1}(\varphi(U))=U$. Similarly, $\varphi^{-1}(\varphi(V))=V$.

We have that $U$ and $V$ are open sets. Moreover, $\varphi(U)$ and $\varphi(V)$ are open sets. With this, we have that $\mu_P(U)=\mu_P(\varphi(U))$ and $\mu_P(V)=\mu_P(\varphi(V))$ for all characteristics $p$ from \Cref{invdensity1,invdensity2,invdensity3}. Note that the image of $\varphi$ is $G_P^{(1)}$, $G_P^{(2)}$, or $G_P^{(3)}$ depending on the characteristic of $K$ and the densities of $\varphi(U)$ and $\varphi(V)$ are computed with respect to these sets. Therefore, it suffices to prove that
\[
\mu_P(\varphi(V))=\frac{1}{Q_P^{10k}}\mu_P(\varphi(U)).
\]

Suppose $E\in\varphi(V)$. We have that $\phi_k(E)$ has Kodaira type $\mathfrak{r}$, Tamagawa number $n$, and $N_P(\phi_k(E))=0$. Therefore, $\phi_k(E)\subset\varphi(U)$. It follows that $\varphi(V)\subset \phi_k^{-1}(\varphi(U))$. Next, suppose $E\in S_k$ and $\phi_k(E)\in \varphi(U)$. Then, the Koidara type of $E$ is $\mathfrak{r}$ and the Tamagawa number of $E$ is $n$. Moreover, because $N_P(\phi_k(E))=0$, $N_P(E)=k$. It follows that $E\in \varphi(V)$. Therefore, $\phi_k^{-1}(\varphi(U))\subset \varphi(V)$. From this, $\phi_k^{-1}(\varphi(U))=\varphi(V)$. The result then follows from \Cref{invdensitymult1,invdensitymult2b,invdensitymult3b}.

\subsection{Densities after multiple iterations of Tate's algorithm}

Let $k$ be a nonnegative integer. For $P\in M_K$, let $U_P^k$ denote the set of elliptic curves $E$ in $G_P$ such that $N_P(E)\geq k+1$. The following proposition is important for the proof of \Cref{multiterglobal}.

\begin{proposition}
\label{multiterlocal}
For $P\in M_K$, $\mu_P(U_P^k)=\frac{1}{Q_P^{10(k+1)}}$.
\end{proposition}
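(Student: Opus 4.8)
The plan is to express $\mu_P(U_P^k)$ as the measure of the set of curves in $G_P$ completing at least $k+1$ iterations of Tate's algorithm, and then to decompose this set by Kodaira type and Tamagawa number so that Theorem \ref{thm:multiterdensity} applies. Concretely, $U_P^k$ is (up to a measure-zero set of curves with discriminant $0$) the disjoint union, over Kodaira types $\mathfrak{r}$, positive integers $n$, and integers $j \geq k+1$, of the sets of elliptic curves $E \in G_P$ with type $\mathfrak{r}$, Tamagawa number $n$, and $N_P(E) = j$. Each such set has measure $\delta_K(\mathfrak{r}, n, j; P)$, so
\[
\mu_P(U_P^k) = \sum_{\mathfrak{r}, n} \sum_{j \geq k+1} \delta_K(\mathfrak{r}, n, j; P) = \sum_{\mathfrak{r}, n} \sum_{j \geq k+1} \frac{1}{Q_P^{10 j}} \delta_K(\mathfrak{r}, n, 0; P),
\]
where the last equality is Theorem \ref{thm:multiterdensity}. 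The geometric series in $j$ sums to $\frac{1}{Q_P^{10(k+1)}} \cdot \frac{1}{1 - Q_P^{-10}}$, so it remains to evaluate $\sum_{\mathfrak{r}, n} \delta_K(\mathfrak{r}, n, 0; P)$.

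The key step is then to show that $\sum_{\mathfrak{r}, n} \delta_K(\mathfrak{r}, n, 0; P) = 1 - Q_P^{-10}$. The most direct route is a self-referential one: the set of \emph{all} elliptic curves in $G_P$ has measure $1$ by Proposition \ref{densityec}, and it decomposes into the curves with $N_P(E) = 0$ together with the curves with $N_P(E) \geq 1$; the latter set is exactly $U_P^0$. Writing $D := \sum_{\mathfrak{r}, n} \delta_K(\mathfrak{r}, n, 0; P)$ for the measure of the former, we get $1 = D + \mu_P(U_P^0)$. Combining this with the displayed computation applied at $k = 0$, namely $\mu_P(U_P^0) = \frac{D}{Q_P^{10}(1 - Q_P^{-10})}$, yields a single linear equation for $D$; solving it gives $D = 1 - Q_P^{-10}$, and feeding this back into the general-$k$ formula gives $\mu_P(U_P^k) = \frac{1}{Q_P^{10(k+1)}}$ as claimed.

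Alternatively — and perhaps more cleanly — one can avoid the self-reference by using the maps $\varphi$ and $\phi_k$ directly, mirroring the proof of Theorem \ref{thm:multiterdensity} in Section \ref{subsec:multiterproof}: $\varphi^{-1}(S_k) = U_P^{k-1}$ (the set of curves in $G_P$ with $N_P \geq k$) has the same measure as $S_k$ by Lemmas \ref{invdensity1}, \ref{invdensity2}, \ref{invdensity3}, and $S_k = \phi_k^{-1}(S_0)$ with $\mu_P(S_0) = 1$, so Lemmas \ref{invdensitymult1}, \ref{invdensitymult2b}, \ref{invdensitymult3b} give $\mu_P(S_k) = Q_P^{-10k}$ directly; then $\mu_P(U_P^k) = \mu_P(S_{k+1}) = Q_P^{-10(k+1)}$. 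The main obstacle in either approach is the bookkeeping: one must be careful that the decomposition of $U_P^k$ into sets indexed by $(\mathfrak{r}, n, j)$ genuinely partitions a full-measure subset of $U_P^k$ (every elliptic curve has a well-defined Kodaira type, Tamagawa number, and $N_P$-value, and the discriminant-zero locus is null by Proposition \ref{densityec}), and that interchanging the sum over $(\mathfrak{r}, n)$ with the geometric series in $j$ is justified — this is fine since all terms are nonnegative and the total is bounded by $1$. I expect the $\varphi$/$\phi_k$ route to be the intended one, since all the needed lemmas are already in place, but the series-summation argument is a good sanity check that the constant comes out to exactly $Q_P^{-10(k+1)}$.
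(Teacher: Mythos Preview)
Your second approach is exactly the paper's proof: apply Lemmas \ref{invdensitymult1}, \ref{invdensitymult2b}, \ref{invdensitymult3b} with $k+1$ in place of $k$ and the whole space as $U$ to get $\mu_P(S_{k+1})=Q_P^{-10(k+1)}$, and transfer this to $G_P$ via $\varphi$ using Lemmas \ref{invdensity1}, \ref{invdensity2}, \ref{invdensity3}. (The paper's write-up is terser and slightly elides the $\varphi$ step, but this is what it means.)

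Your first approach is also correct but takes a longer route: you invoke Theorem \ref{thm:multiterdensity} to turn $\mu_P(U_P^k)$ into a geometric series times $D=\sum_{\mathfrak r,n}\delta_K(\mathfrak r,n,0;P)$, then determine $D$ by a self-consistency equation at $k=0$. This works, and it is a nice independent check, but it is strictly more circuitous than the paper's argument, since Theorem \ref{thm:multiterdensity} itself already rests on the same $\phi_k$ lemmas that give the result in one line. The only care needed (which you note) is that the decomposition of $U_P^k$ by $(\mathfrak r,n,j)$ covers everything off a null set and that Fubini/Tonelli justifies the interchange of sums; both are immediate here.
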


\begin{proof}
Suppose $P\in M_k$. From \Cref{invdensitymult1,invdensitymult2b,invdensitymult3b} with $k+1$ as $k$ and $G_P$ as $U$, we have that
\[
\mu_P(U_P^k)=\frac{1}{Q_P^{10(k+1)}}\cdot\mu_P(G_P)=\frac{1}{Q_P^{10(k+1)}}.
\]
This finishes the proof.
\end{proof}

\begin{theorem}
\label{multiterglobal}
Let $S$ be a finite nonempty subset of $M_K$. Suppose $U$ is the set of elliptic curves in $W_S$ such that $N_P(E)\leq k$ for all $P\in S^C$. Then,
\[
d_S(U)=\frac{1}{\zeta_K(10(k+1))}\prod_{P\in S} \left(\frac{Q_P^{10(k+1)}}{Q_P^{10(k+1)}-1}\right).
\]
\end{theorem}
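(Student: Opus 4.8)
The plan is to deduce the formula from the sieve result \Cref{globaldensity1}, applied with $N=5$ — identifying $W_S$ with $\mathcal{O}_{K,S}^5$ and $G_P$ with $R_P^5\subset K_P^5$ — to the local sets $U_P:=U_P^k$ introduced just before \Cref{multiterlocal}, namely $U_P^k=\{E\in G_P: E\text{ is an elliptic curve and }N_P(E)\ge k+1\}$. With this choice the map of \Cref{globaldensity1} is $\mathcal{P}(E)=\{P\in S^C: N_P(E)\ge k+1\}$, and $\mathcal{P}^{-1}(\{\emptyset\})$ consists of the elliptic curves of $W_S$ with $N_P(E)\le k$ for all $P\in S^C$, that is, the set $U$, together with the curves of $W_S$ of discriminant zero. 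Since the latter family has density $0$ by the estimate recalled in \Cref{sec:global}, $d_S(U)$ exists and equals $d_S(\mathcal{P}^{-1}(\{\emptyset\}))$; and since $\emptyset$ is a finite subset of $S^C$, part (3) of \Cref{globaldensity1} gives $d_S(U)=\prod_{P\in S^C}\bigl(1-\mu_P(U_P^k)\bigr)$, which \Cref{multiterlocal} converts into $\prod_{P\in S^C}\bigl(1-Q_P^{-10(k+1)}\bigr)$.

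Before invoking \Cref{globaldensity1} I would verify its two hypotheses. For $\mu_P(\partial U_P^k)=0$: since an elliptic curve has nonzero discriminant, $U_P^k=\{E\in G_P:\Delta(E)\ne0,\ N_P(E)\ge k+1\}$, and by the openness remark made just before \Cref{densityec} the set of $E\in G_P$ with a prescribed Kodaira type, Tamagawa number, and value of $N_P$ is open; hence $U_P^k$, being a union of such sets, is open, and $N_P$ is locally constant on $\{E\in G_P:\Delta(E)\ne0\}$. It then follows that every boundary point of $U_P^k$ lies in $\{E\in G_P:\Delta(E)=0\}$, which is Haar-null as in the proof of \Cref{densityec}, so $\mu_P(\partial U_P^k)=0$. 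For the tail condition, let $V_M$ be the set of $E\in W_S$ lying in $U_P^k$ for some $P\in S^C$ of degree at least $M$; then $N_P(E)\ge k+1$ forces $N_P(E)\ge1$, so $V_M$ is contained in the set of the same name appearing in \Cref{multitercond}, and monotonicity of upper density gives $\lim_{M\to\infty}\overline{d}_S(V_M)=0$.

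The remaining step is the Euler-product computation. With $s=10(k+1)\ge10$ the series $\sum_{P\in M_K}Q_P^{-s}$ converges, so all the products below are absolutely convergent; grouping places by degree and using $Q_P=q^{\deg P}$ together with the definition of $\zeta_K$ yields $\prod_{P\in M_K}\bigl(1-Q_P^{-s}\bigr)=\prod_{d\ge1}\bigl(1-q^{-sd}\bigr)^{T_d}=\zeta_K(s)^{-1}$. Dividing out the finitely many places in $S$ gives
\[
d_S(U)=\prod_{P\in S^C}\Bigl(1-\frac{1}{Q_P^{s}}\Bigr)=\frac{1}{\zeta_K(s)}\prod_{P\in S}\Bigl(1-\frac{1}{Q_P^{s}}\Bigr)^{-1}=\frac{1}{\zeta_K(s)}\prod_{P\in S}\frac{Q_P^{s}}{Q_P^{s}-1},
\]
which is the asserted identity with $s=10(k+1)$.

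I expect the main obstacle to be the first hypothesis check, namely confirming that $U_P^k$ agrees up to a Haar-null set with an open set; this amounts to the local constancy of the output of Tate's algorithm (here, of the value $N_P$) on curves of nonzero discriminant. Once this is in hand, the rest is a direct application of \Cref{globaldensity1}, \Cref{multitercond}, \Cref{multiterlocal}, and the product formula for $\zeta_K$.
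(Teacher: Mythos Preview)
Your proposal is correct and follows essentially the same route as the paper: apply \Cref{globaldensity1} with the local sets $U_P^k$, use \Cref{multitercond} for the tail hypothesis, and read off the local measures from \Cref{multiterlocal}. You are more explicit than the paper in two places --- you verify $\mu_P(\partial U_P^k)=0$ via local constancy of $N_P$ on nonsingular curves, and you spell out the Euler-product identification with $\zeta_K(10(k+1))^{-1}$ --- but neither of these constitutes a different approach.
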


\begin{proof}
For a positive integer $M$, let $V_M$ denote the set of elliptic curves $E\in W_S$ such that there exists $P\in S^C$ with degree at least $M$ such that $E\in U_P^k$. From \Cref{multitercond}, we have that $\lim_{M\rightarrow\infty}\overline{d}_S(V_M)=0$. Therefore, we can use \Cref{globaldensity1} with $U_P$ set as $U_P^k$ for $P\in S^C$ and $T=\{\}$. The result follows from \Cref{multiterlocal}.
\end{proof}

\begin{exmp}
We give an example of \Cref{multiterglobal}. Let $K=\mathbb{F}_q(t)$. Suppose $P_\infty$ is the infinite place of $\mathbb{F}_q(t)$ and let $S=\{P_\infty\}$. Let $k$ be a nonnegative integer and $U$ be the set of elliptic curves in $W_S$ such that $N_P(E)\leq k$ for all $P\in S^C$. From \cite{NTFunc}*{Theorem 5.9}, because the genus of $K$ is $0$, we have that $\zeta_K(10(k+1))=\frac{q^{20k+19}}{(q^{10k+9}-1)(q^{10k+10}-1)}$. Since $P_\infty$ has degree $1$, \Cref{multiterglobal} implies that $d_S(U)=1-\frac{1}{q^{10k+9}}$.
\end{exmp}

\subsection{The densities of the global Tamagawa numbers}
\label{subsec:tamagawanum}

In this subsection, $E\in W_S$ has \textit{global Tamagawa number} equal to $n$ if the product of $c_P(E)$ for $P\in S^C$ equals $n$. We are particularly interested in the case $n=1$, see \Cref{thm:existence}.

\begin{lemma}
\label{lemma:discsq}
Suppose $P\in M_K$ and $E\in G_P$. If $c_P(E)>1$, then $v_P(\Delta(E))\geq 2$.
\end{lemma}

\begin{proof}
This is straightforward to check using the computations from Sections \ref{sec:locala}, \ref{sec:localb}, and \ref{sec:localc}.
\end{proof}

\begin{proof}[Proof of \Cref{thm:densitytamn}]
Suppose we are given values of $c_P$ for $P\in S^C$ such that $\prod_{P\in S^C} c_P=n$. For a positive integer $M$, let $V_M$ be the set of elliptic curves $E\in W_S$ such that for some $P\in S^C$ with degree at least $M$, $c_P(E)\not=c_P$. Since only finitely many of the $c_P$ are greater than $1$ and $\Delta(E)$ is a square-free polynomial which must be divisible by $\pi_P^2$ if $c_P(E)>1$ by \Cref{lemma:discsq}, applying \Cref{lemma:sqfree} implies that $\lim_{M\rightarrow\infty} \overline{d}_S(V_M)=0$. Then, applying \Cref{globaldensity1} with $T$ as the empty set gives that the density of $E$ such that $c_P(E)=c_P$ for all $P\in S^C$ is $\prod_{P\in S^C} d_P(c_P)$. Summing over the choices for $c_P$ for $P\in S^C$ gives the result.
\end{proof}

\begin{lemma}
\label{lemma:globaldensitytam1} Over $W_S$, the density of minimal curves with global Tamagawa number $1$ is at least $\zeta_K(2)^{-1}$.
\end{lemma}

\begin{proof}
Suppose $P\in S^C$. Using \Cref{thm:formula} gives that the local density of the minimal curves with Tamagawa number $1$ at $P$ is at least
\[
\delta_K(I_0, 1, 0; P) + \delta_K(I_1, 1, 0; P) + \delta_K(II, 1, 0; P) = 1-\frac{1}{Q_P^2}.
\]
Hence, using \Cref{thm:densitytamn} implies that the density of the global Tamagawa number $1$ is at least $\prod_{P\in S^C} \left(1-\frac{1}{Q_P^2}\right) \geq \zeta_K(2)^{-1}$.
\end{proof}

The following result is an analogue of \cite{ProdNF}*{Theorem 1.5}.

\begin{theorem}
\label{thm:existence}
Suppose $\delta<1$. There exists a global function field $K$ such that the density of the global Tamagawa number $1$ over $W_S$ is at least $\delta$ for all finite nonempty $S\subset M_K$.
\end{theorem}

\begin{proof}
See \Cref{lemma:globaldensitytam1} and \Cref{thm:closeto1}.
\end{proof}

\section{Constructions of global function fields}
\label{sec:construction}

First, we discuss two formulations of the zeta function of the global function field $K$ from \cite{NTFunc}*{Chapter 5}, see \pref{eq:zetadefsum} and \pref{eq:zetadefprod}. For a nonnegative integer $n$, let $b_n(K)$ denote the number of divisors $D$ of $K$ such that $D\geq 0$ and the degree of $D$ is $n$. The zeta function of $K$ is 
\begin{equation}
\label{eq:zetadefsum}
\zeta_K(s)=\sum_{n=0}^\infty\frac{b_n(K)}{q^{ns}}.
\end{equation}
Moreover, we can write $\zeta_K(s)$ using a different formulation. For a positive integer $d$, suppose $a_d(K)$ is the number of places of $K$ with degree $d$. Also, for a positive integer $m$, suppose $N_m(K)=\sum_{d|m} da_d(K)$. We have that
\begin{equation}
\label{eq:zetadefprod}
\zeta_K(s)=\prod_{d= 1}^\infty\left(1-\frac{1}{q^{ds}}\right)^{-a_d(K)}=\exp\left(\sum_{m=1}^\infty\frac{N_m(K)}{mq^{ms}}\right).
\end{equation}
In this section, we study $N_m(K)$ for positive integers $m$.

From \cite{NTFunc}*{Chapter 5}, we also have that $\zeta_K(s)$ converges absolutely as the sum $\zeta_K(s)=\sum_{n=0}^\infty\frac{b_n(K)}{q^{ns}}$ and product $\zeta_K(s)=\prod_{d= 1}^\infty\left(1-\frac{1}{q^{ds}}\right)^{-a_d(K)}$ for $s\in\mathbb{C}$ such that $\text{Re}(s)>1$. In particular, if $s$ is a real number that is greater than one, then $\zeta_K(s)$ converges to a real number that is greater than one.

There has been a significant amount of research about global function fields with many rational places, which are places with degree $1$. Examples of papers about this topic are \cite{AuerRCF}, \cite{RP1}, and \cite{RP2}. On the other hand, in this paper we construct global function fields that do not have places of certain degrees. Particularly, the global function fields we construct do not have any rational places. We state the main result of this section.

\begin{theorem}
\label{thm:closeto1}
Suppose $s$ and $\epsilon$ are real numbers such that $s>1$ and $\epsilon>0$. There exists a global function field $K$ such that the full constant field of $K$ is $\mathbb{F}_q$ and $\zeta_K(s)<1+\epsilon$.
\end{theorem}

We prove \Cref{thm:closeto1} in \Cref{subsec:mainproof}. Note that the constructions in \Cref{subsec:projalg} are used to prove \Cref{thm:closeto1}.

\subsection{Projective algebraic curves}
\label{subsec:projalg}

\begin{definition}
\label{def:projalg}

For positive integers $n$ and $r$ such that $r>1$, let $C_{n,r}$ be the projective algebraic curve over $\mathbb{P}^2(\mathbb{F}_q)$ with equation
\begin{align*}
C_{n,r}:\, & x^{rq^n}-x^ry^{rq^n-r}+y^{rq^n} -x^rz^{rq^n-r}+x^ry^{q^n-1}z^{(r-1)q^n-r+1}-y^rz^{rq^n-r}
+z^{rq^n}=0.
\end{align*}
\end{definition}

\begin{theorem}
\label{thm:smooth}
Suppose $n$ and $r$ are positive integers. Assume that $r>1$ and $r\equiv 1\pmod{p}$. If $p\geq 3$, assume that $r$ divides $q^n-1$. Then, $C_{n,r}$ is smooth as a curve over $\mathbb{P}^2(\overline{\mathbb{F}_q})$.
\end{theorem}

\begin{proof}
For the sake of contradiction, let $(\alpha,\beta,\gamma)$ be a singular point on $C_{n,r}$ in $\mathbb{P}^2(\overline{\mathbb{F}_q})$. In order to prove this theorem, we use the fact that the characteristic of $\overline{\mathbb{F}_q}$ is $p$.

Suppose $\gamma=0$. Because we cannot have $\alpha=\beta=0$, we must have that $\alpha\not=0$ and $\beta\not=0$. Afterwards, the partial derivative of $C_{n,r}$ with respect to $x$ at $(\alpha,\beta,0)$ is nonzero, which is a contradiction.

Next, suppose $\gamma\not=0$. Set $\gamma=1$. Because the partial derivative of $C_{n,r}$ with respect to $x$ at $(\alpha,\beta, 1)$ is $0$, we get that
\begin{equation}
\label{eq:partialx}
\alpha^{r-1}(\beta^{rq^n-r}-\beta^{q^n-1}+1)=0.
\end{equation}
Since the partial derivative of $C_{n,r}$ with respect to $y$ at $(\alpha,\beta,1)$ is $0$,
\begin{equation}
\label{eq:partialy}
\alpha^r\beta^{rq^n-r-1}-\alpha^r\beta^{q^n-2}-\beta^{r-1}=0.
\end{equation}
If $\alpha=0$, then $\beta=0$ from \pref{eq:partialy}, but this is a contradiction because $(0,0,1)$ is not a point on $C_{n,r}$. Therefore, $\alpha\not=0$. Because of this, \pref{eq:partialx} implies that $\beta\not=0$ as well. Note that because $\alpha$ and $\beta$ are nonzero, the partial derivative of $C_{n,r}$ with respect to $z$ at $(\alpha,\beta,1)$ is $0$. Therefore, $\alpha^r+\beta^r=0$. Also, because $(\alpha,\beta,1)$ is a point on $C_{n,r}$,
\[
\alpha^{rq^n}+\beta^{rq^n}-\beta^r+1=0.
\]
However, $\alpha^{rq^n}+\beta^{rq^n}=(\alpha^r+\beta^r)^{q^n}=0$, giving that $\beta^r=1$ and $\alpha^r=-\beta^r=-1$. Because $\alpha\not=0$, \pref{eq:partialx} gives that
\[
\beta^{q^n-1}=\beta^{rq^n-r}+1=2.
\]
If $p=2$, this is a contradiction to $\beta\not=0$. Suppose $p\geq 3$. Then, $\beta^{q^n-1}=1$ since $r$ divides $q^n-1$, which is a contradiction to $\beta^{q^n-1}=2$. We are done.
\end{proof}

Suppose $n$ and $r$ are positive integers that satisfy the conditions of \Cref{thm:smooth}. The theorem implies that $C_{n,r}$ is smooth as a curve over $\mathbb{P}^2(\overline{\mathbb{F}_q})$ so $C_{n,r}$ is absolutely irreducible. From \cite{AuerRCF}, we therefore have that $\mathbb{F}_q(C_{n,r})$ is a global function field. Using \cite{AuerRCF}, we also have that for a positive integer $m$, $N_m(\mathbb{F}_q(C_{n,r}))$ is the number of points on $C_{n,r}$ as a projective curve over $\mathbb{P}^2(\mathbb{F}_{q^m})$. 

\subsection{The number of points on \texorpdfstring{$C_{n,r}$}{}}
\label{subsec:numpoints} 

Suppose $m$, $n$, and $r$ are positive integers such that $r>1$. We study the number of points on $C_{n,r}$ as a projective curve over $\mathbb{P}^2(\mathbb{F}_{q^m})$ in this subsection. Note that $n$ and $r$ do not necessarily follow the conditions of \Cref{thm:smooth}. The results of this subsection will be used to prove \Cref{thm:closeto1} in \Cref{subsec:mainproof}.

\begin{proposition}
\label{prop:numpoints1}
Suppose $m$, $n$, and $r$ are positive integers such that $r>1$ and $m$ divides $n$. The number of points on $C_{n,r}$ as a projective curve over $\mathbb{P}^2(\mathbb{F}_{q^m})$ is $0$.
\end{proposition}

\begin{proof}
For the sake of contradiction, assume $(\alpha,\beta,\gamma)\in \mathbb{P}^2(\mathbb{F}_{q^m})$ is a point on $C_{n,r}$. Suppose $\gamma=0$. Note that at least one of $\alpha$ and $\beta$ must be nonzero. Therefore, we must have that both $\alpha$ and $\beta$ are nonzero. With this, we can let $\beta=1$. Afterwards, we get that $\alpha^{rq^n}-\alpha^r+1=0$, a contradiction. Next, suppose $\gamma\not=0$; particularly, we can let $\gamma=1$. If $\beta=0$, we get $\alpha^{rq^n}-\alpha^r+1=0$, a contradiction. Assume that $\beta\not=0$. Then, using $\beta^{q^n-1}=1$, we also get $\alpha^{rq^n}-\alpha^r+1=0$, a contradiction. We are done.
\end{proof}

For a nonzero polynomial $P(x)$ in $\overline{\mathbb{F}_p}[x]$, define $v(P(x))$ to be the largest integer $e$ such that $x^e$ divides $P(x)$. Suppose $m$ is a positive integer. Because $\mathbb{F}_{q^m}\subset\overline{\mathbb{F}_p}$, we can consider $v$ to be a function over $\mathbb{F}_{q^m}[x]$.

\begin{lemma}
\label{lemma:polynumpoints}
Let $m$ and $d$ be positive integers such that $d$ is a proper divisor of $m$. Let $P(x)$ be a nonzero polynomial in $\mathbb{F}_{q^m}[x]$ such that $v(P(x))<\text{deg}(P(x))$. Suppose $A$ is the set of $w\in\mathbb{F}_{q^m}$ such that $w^{\frac{q^m-1}{q^d-1}}=1$. The number of $w\in A$ such that $P(w)\in A$ is at most 
\[
\frac{2(\deg(P(x))-v(P(x)))(q^{m-d}-1)}{q^d-1}.
\]
\end{lemma}

\begin{proof}
If $w\in A$,
\[
    P(w)^{\frac{q^m-1}{q^d-1}} =\prod_{i=0}^{\frac{m}{d}-1}P\left(w^{q^{di}}\right)
    = \prod_{i=0}^{\frac{m}{d}-2} P(w^{q^{di}}) P\left(w^{-\frac{q^{m-d}-1}{q^d-1}}\right).
\]
Suppose $Q(x)\in\mathbb{F}_{q^m}[x]$ is
\begin{align*}
Q(x) = & \prod_{i=0}^{\frac{m}{d}-2} P\left(x^{q^{di}}\right)x^{\frac{\text{deg}(P)(q^{m-d}-1)}{q^d-1}}P\left(x^{-\frac{q^{m-d}-1}{q^d-1}}\right)  -x^{\frac{\text{deg}(P)(q^{m-d}-1)}{q^d-1}}.
\end{align*}

We have that if $w\in A$ and $P(w)\in A$, then $w\not=0$ and $Q(w)=0$. Therefore, the number of $w\in A$ such that $P(w)\in A$ is at most the number of elements of $\mathbb{F}_{q^m}^{\times}$ that are roots of $Q(x)$. Note that $Q(x)\not=0$. Moreover,
\[
v(Q(x))=\frac{v(P(x))(q^{m-d}-1)}{q^d-1}
\]
and 
\[
\deg(Q(x))=\frac{(2\deg(P(x))-v(P(x)))(q^{m-d}-1)}{q^d-1}.
\]
With this, the number of elements of $\mathbb{F}_{q^m}^{\times}$ that are roots of $Q(x)$ is at most
\[
\deg(Q(x))-v(Q(x))=\frac{2(\deg(P(x))-v(P(x)))(q^{m-d}-1)}{q^d-1}.
\]
This completes the proof.
\end{proof}

\begin{theorem}
\label{thm:numpoints2}
Suppose $m$, $n$, and $r$ are positive integers such that $r>1$ and $m$ does not divide $n$. Let $d=\gcd(m,n)$. The number of points on $C_{n,r}$ as a projective curve over $\mathbb{P}^2(\mathbb{F}_{q^m})$ is at most
\[
\min(q^m-1, rq^{m-d})+r(q^d-1)\min(1+2r(q^{m-d}-1), q^m)+rq^m.
\]
\end{theorem}

\begin{proof}
Assume $(\alpha,\beta,0)$ is a point on $C_{n,r}$ in $\mathbb{P}^2(\mathbb{F}_{q^m})$. Then, $\alpha^{rq^n}-\alpha^r\beta^{rq^n-r}+\beta^{rq^n}=0$. Because we cannot have $\alpha=\beta=0$, $\alpha\not=0$ and $\beta\not=0$. Therefore, we can let $\beta=1$. The number of points for this case is then the number of solutions to $\alpha^{rq^n}-\alpha^r+1=0$ for $\alpha\in\mathbb{F}_{q^m}$. Suppose $l$ is the positive integer such that $ld$ is the remainder when $n$ is divided by $m$. Because $\alpha\not=0$, $\alpha^{q^m-1}=1$, which means that $\alpha^{q^n}=\alpha^{q^{ld}}$. Using $\alpha^{rq^n}-\alpha^r+1=0$, we have that $\alpha^{rq^{ld}}-\alpha^r+1=0$. Since $ld\leq m-d$, the number of points for this case is at most $\min(q^m-1, rq^{m-d})$.

Next, we consider points $(\alpha,\beta,1)$ on $C_{n,r}$ in $\mathbb{P}^2(\mathbb{F}_{q^m})$. Let
\[
f(x, y)=x^{q^n}-(y^{rq^n-r}-y^{q^n-1}+1)x+y^{rq^n}-y^r+1=0
\]
and 
\[
C(y)=y^{rq^n-r}-y^{q^n-1}+1.
\]
Note that $(\alpha,\beta,1)$ is on $C_{n,r}$ if and only if $f(\alpha^r,\beta)=0$.

Suppose that $S_1$ is the set of $\beta\in\mathbb{F}_{q^m}$ such that there are no solutions to $f(\alpha^r,\beta)=0$ for $\alpha\in\mathbb{F}_{q^m}$. Afterwards, we consider $\beta$ such that $\beta\notin S_1$.

Suppose $S_2$ is the set of $\beta\in\mathbb{F}_{q^m}\backslash S_1$ such that $C(\beta)=0$. Suppose $\beta\in S_2$. The number of solutions to $f(\alpha',\beta)=0$ for $\alpha'\in\mathbb{F}_{q^m}$ is at most $1$ (actually equal to $1$ because $\beta\notin S_1$), so the number of solutions to $f(\alpha^r, \beta)=0$ for $\alpha\in\mathbb{F}_{q^m}$ is at most $r$. This gives at most $r|S_2|$ points for the case $\beta\in S_2$.

For the next step, suppose $S_3$ is the set of $\beta\in\mathbb{F}_{q^m}\backslash (S_1\cup S_2)$ such that there are no solutions to $u^{q^n-1}=C(\beta)$ for $u\in \mathbb{F}_{q^m}$. Suppose $\beta\in S_3$. Also, suppose that for $w\in\mathbb{F}_{q^m}$, $f(w,\beta)=0$; because $\beta\notin S_1$, $w$ exists. For $l\in\mathbb{F}_{q^m}$, $f(w+l,\beta)=0$ if and only if $l^{q^n}-C(\beta)l=0$. However, because $\beta\in S_3$, this is true for only $l=0$. With this, $w$ is the only element of $\mathbb{F}_{q^m}$ that is a root of $f(x,\beta)$. Also, $\alpha^r=w$ for $\alpha\in\mathbb{F}_{q^m}$ has at most $r$ solutions. Then, the number of solutions to $f(\alpha^r,\beta)=0$ for $\alpha\in\mathbb{F}_{q^m}$ is at most $r$. Therefore, we have that the number of points from the case $\beta\in S_3$ is at most $r|S_3|$.

Suppose $S_4=\mathbb{F}_{q^m}\backslash (S_1\cup S_2\cup S_3)$ and $\beta\in S_4$. Suppose that for $w\in\mathbb{F}_{q^m}$, $f(w,\beta)=0$. Similarly to the case for $S_3$, for $l\in\mathbb{F}_{q^m}$, $f(w+l,\beta)=0$ if and only if $l^{q^n}-C(\beta)l=0$. A solution to this is $l=0$. Suppose $l_0\in\mathbb{F}_{q^m}^{\times}$ and $l_0^{q^n-1}=C(\beta)$. Note that because $\beta\in S_4$, $l_0$ exists. For $a\in\mathbb{F}_{q^m}^{\times}$, $(al_0)^{q^n-1}=C(\beta)$ if and only if $a^{q^n-1}=1$. But, because $a^{q^m-1}=1$, $a^{q^n-1}=1$ if and only if $a^{q^d-1}=1$. However, $a^{q^d-1}=1$ has $q^d-1$ solutions for $a\in\mathbb{F}_{q^m}^{\times}$. Therefore, $l^{q^n}-C(\beta)l=0$ has $q^d-1$ solutions for $l\in\mathbb{F}_{q^m}^{\times}$. We then have that $l^{q^n}-C(\beta)l=0$ has $q^d$ solutions for $l\in\mathbb{F}_{q^m}$. Following this, $f(x,\beta)=0$ has $q^d$ solutions for $x\in\mathbb{F}_{q^m}$. Therefore, $f(\alpha^r,\beta)=0$ has at most $rq^d$ solutions for $\alpha\in\mathbb{F}_{q^m}$. Afterwards, the number of points from the case $\beta\in S_4$ is at most $rq^d|S_4|$.

We upper bound $|S_4|$. Let $T$ be the set of $w\in\mathbb{F}_{q^m}$ such that $w^{\frac{q^m-1}{q^d-1}}=1$ and
$(w^r-w+1)^{\frac{q^m-1}{q^d-1}}=1$. From \Cref{lemma:polynumpoints}, $|T|\leq\frac{2r(q^{m-d}-1)}{q^d-1}$. Suppose $\beta\in S_4$ and $\beta\not=0$. We have that $\beta^{\frac{(q^n-1)(q^m-1)}{q^d-1}}=1$. Also, because $C(\beta)\not=0$ and $u\in\mathbb{F}_{q^m}$ exists such that $u^{q^n-1}=C(\beta)$, $C(\beta)^{\frac{q^m-1}{q^d-1}}=1$. Therefore, $\beta^{q^n-1}\in T$. Suppose $w\in T$. Note that the number of solutions to $\beta^{q^n-1}=w$ for $\beta\in\mathbb{F}_{q^m}$ is at most $q^d-1$. After accounting for the case that $0$ could be an element of $S_4$, we obtain that
\[
|S_4|\leq 1+(q^d-1)|T|\leq 1+2r(q^{m-d}-1)
\]
and therefore that $|S_4|\leq\min(1+2r(q^{m-d}-1), q^m)$.

We then have that the number of points in $\mathbb{P}^2(\mathbb{F}_{q^m})$ on $C_{n,r}$ is at most
\begin{align*}
    & \min(q^m-1, rq^{m-d})+r|S_2|+r|S_3|+rq^d|S_4|  \\ 
    & \leq \min(q^m-1, rq^{m-d}) +r(q^m-|S_4|)+rq^d|S_4| \\
    & \leq \min(q^m-1, rq^{m-d})+r(q^d-1)\min(1+2r(q^{m-d}-1), q^m)+rq^m.
\end{align*}
This finishes the proof.
\end{proof}

\subsection{Proof of 
\texorpdfstring{\Cref{thm:closeto1}}{}}
\label{subsec:mainproof}

Suppose $N$ is an integer such that $N\geq 2$. Let $K_N=\mathbb{F}_q(C_{N!,\,p+1})$. Note that because $n=N!$ and $r=p+1$ satisfy the conditions of \Cref{thm:closeto1}, $K_N$ is a global function field and the full constant field of $K_N$ is $\mathbb{F}_q$. From \Cref{prop:numpoints1}, we have that $N_m(K_N)=0$ for positive integers $m$ such that $m$ divides $N!$. Moreover, from \Cref{thm:numpoints2}, $N_m(K_N)\leq (2p^2+5p+3)q^m$ for positive integers $m$ such that $m$ does not divide $N!$. After using (\ref{eq:zetadefprod}), 
\[
\log(\zeta_{K_N}(s))=\sum_{m=1}^\infty \frac{N_m(K_N)}{mq^{ms}}.
\]
However,
\begin{align*}
\sum_{m=1}^\infty\frac{N_m(K_N)}{mq^{ms}}
& \leq (2p^2+5p+3)\sum_{m= N+1}^\infty\frac{1}{mq^{m(s-1)}} \\
& \leq \frac{2p^2+5p+3}{N+1}\sum_{m=N+1}^\infty \frac{1}{q^{m(s-1)}} \\
& = \frac{2p^2+5p+3}{N+1}\cdot\frac{1}{q^{(N+1)(s-1)}}\cdot\frac{1}{1-\frac{1}{q^{s-1}}}.
\end{align*}
Therefore, $\lim_{N\rightarrow\infty}\log(\zeta_{K_N}(s))=0$. It follows that $\zeta_{K_N}(s)<1+\epsilon$ for sufficiently large $N$, completing the proof.

\bibliography{mybib.bib}

\end{document}